\setlist[enumerate]{label=(\alph*)}
\newtheorem{theorem}{Theorem}[section]
\newtheorem{proposition}[theorem]{Proposition}
\newtheorem{lemma}[theorem]{Lemma}
\newtheorem{corollary}[theorem]{Corollary}
\newtheorem{conjecture}[theorem]{Conjecture}
\crefname{conjecture}{Conjecture}{Conjectures}
\theoremstyle{definition}
\newtheorem{remark}[theorem]{Remark}
\newtheorem{example}[theorem]{Example}
\newtheorem{question}[theorem]{Question}
\newcommand*{\RR}{\mathbb{R}}
\newcommand*{\ZZ}{\mathbb{Z}}
\newcommand*{\sM}{\mathscr{M}}
\newcommand*{\sP}{\mathscr{P}}
\newcommand*{\card}{\#}
\newcommand*{\defeq}{\mathrel{\rlap{\raisebox{0.3ex}{$\m@th\cdot$}}\raisebox{-0.3ex}{$\m@th\cdot$}}=}
\newcommand*{\given}{\mid}
\newcommand*{\maps}{\nobreak\mskip2mu\mathpunct{}\nonscript
  \mkern-\thinmuskip{:}\mskip6muplus1mu\relax}
\newcommand*{\abs}[1]{{\lvert #1 \rvert}}
\DeclarePairedDelimiter{\floor}{\lfloor}{\rfloor}
\DeclarePairedDelimiterX\set[1]\lbrace\rbrace{\def\given{\;\delimsize\vert\;}#1}
\DeclarePairedDelimiterX\genset[1]\langle\rangle{\def\given{\;\delimsize\vert\;}#1}
\newcommand*{\rest}[2]{{
  \left.\kern-\nulldelimiterspace 
  #1 
  \vphantom{\big|} 
  \right|_{#2} 
  }}
\DeclareMathOperator{\im}{im}
\DeclareMathOperator{\Hom}{Hom}
\DeclareMathOperator{\Aut}{Aut}
\DeclareMathOperator{\supp}{supp}
\DeclareMathOperator{\cyc}{cyc}
\DeclareMathOperator{\val}{val}
\DeclareMathOperator{\Cer}{Cer}
\DeclareMathOperator{\Jac}{Jac}
\newcommand*{\id}{\mathrm{id}}
\newcommand*{\ev}{\mathrm{ev}}
\title{The Ceresa period from tropical homology}
\author{Caelan Ritter}
\date{\today}
\begin{document}
\maketitle

\begin{abstract}
Given a finite graph $G$, we define the Ceresa period $\alpha(G)$ as a tool for studying algebraic triviality of the tropical Ceresa cycle introduced by Zharkov.  We show that $\alpha(G) = 0$ if and only if $G$ is of hyperelliptic type; then a theorem of Corey implies that having $\alpha(G) = 0$ is a minor-closed condition with forbidden minors $K_4$ and $L_3$.
\end{abstract}


\section*{Introduction}

\renewcommand*{\thetheorem}{\Alph{theorem}}

Let $C$ be a smooth algebraic curve.  The Ceresa cycle of $C$ is a canonical algebraic $1$-cycle in the Jacobian that is homologically trivial.  A celebrated result of Ceresa in \cite{ceresa1983c} says that this cycle is nonetheless algebraically nontrivial for very general curves of genus at least 3.  It is known to be trivial for hyperelliptic curves, and understanding whether the converse holds is an active area of study (see, e.g., \cite{beauville2021non,bisogno2023group}).

In \cite{zharkov2015c}, Zharkov defines the tropical Ceresa cycle and a notion of algebraic equivalence for tropical cycles.  In analogy to Ceresa's original result, he proves that, for very general tropical curves with underlying graph $K_4$, the Ceresa cycle is algebraically nontrivial; in other words, there exists a countable union of hypersurfaces in the moduli space of tropical curves overlying $K_4$ away from which this property holds.  The main tool employed in the argument is a tropical homological invariant of tropical curves of genus 3 that vanishes whenever the Ceresa cycle is algebraically trivial.  In the present paper, we extend this tool, which we call the \textit{Ceresa period} $\alpha(C)$, to all tropical curves $C$.  This invariant lives in a quotient of the third exterior power of the universal cover of the tropical Jacobian.  Instead of working with a particular tropical curve $C$, we find it helpful to follow the approach of \cite{corey2022ceresa} and consider the underlying graph $G$ with variable edge lengths $x_e$.  We define a corresponding ``universal'' Ceresa period $\alpha(G)$ that tells us how $\alpha(C)$ behaves on the moduli space of tropical curves overlying $G$.

Unlike in the algebraic setting, the tropical Torelli map is not injective; one consequence of this is that there exist tropical curves that are not themselves hyperelliptic but whose Jacobians are isomorphic to those of hyperelliptic curves.  A graph $G$ is said to be of \textit{hyperelliptic type} if some (and, in fact, every) tropical curve with $G$ as an underlying graph satisfies this property.  Then our main result is that
\begin{theorem}\label{thm:A}
  $G$ has trivial Ceresa period if and only if $G$ is of hyperelliptic type.
\end{theorem}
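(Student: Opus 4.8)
The plan is to prove the two implications separately, using that the Ceresa period is a canonical, functorial invariant valued in a quotient of the odd exterior power $\wedge^3 H_1(G,\RR)$ of the universal cover of the tropical Jacobian, together with the forbidden-minor description of hyperelliptic type.

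For the implication that hyperelliptic type forces $\alpha(G) = 0$, I would exploit the hyperelliptic involution. A genuinely hyperelliptic graph carries an involution $\iota$ acting as $-\id$ on $H_1(G,\ZZ)$, and hence on $H_1(G,\RR)$; since $\alpha$ is natural, $\iota_*\alpha(G) = \alpha(G)$, whereas the induced map on $\wedge^3 H_1(G,\RR)$ is $\wedge^3(-\id) = -\id$, so that $\alpha(G) = -\alpha(G)$ and therefore $\alpha(G) = 0$, the target being a real vector space. To pass from honest hyperelliptic graphs to all graphs of hyperelliptic type, I would check that the construction of $\alpha$ factors through the polarized tropical Jacobian, so that a graph whose Jacobian is isomorphic to a hyperelliptic one inherits the vanishing.

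For the converse I argue by contraposition, reducing to a finite minor check. The first step is to show that $\{\,G : \alpha(G) = 0\,\}$ is minor-closed. Contraction is transparent: $G/e$ is the specialization $x_e \to 0$, so if $\alpha(G)$ vanishes identically in the edge lengths then so does $\alpha(G/e)$. Deletion of a non-bridge $e$ drops the genus and replaces $H_1(G,\RR)$ by the subspace $H_1(G \smallsetminus e,\RR)$ spanned by cycles avoiding $e$; here I would show that restriction along this inclusion carries $\alpha(G)$ to $\alpha(G \smallsetminus e)$, whence vanishing again descends. Granting minor-closedness, Corey's theorem in \cite{corey2022ceresa} tells us that a graph which is not of hyperelliptic type contains $K_4$ or $L_3$ as a minor, so it suffices to verify the two base cases $\alpha(K_4) \neq 0$ and $\alpha(L_3) \neq 0$; the first recovers Zharkov's nonvanishing computation for $K_4$ in \cite{zharkov2015c}, and the second is an analogous explicit calculation.

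I expect the deletion step to be the crux. Contraction is a benign continuous degeneration, but deletion changes the cycle lattice, the genus, and the polarization at once, so the delicate point is to confirm that the universal period is genuinely compatible with the inclusion of cycle spaces and that passing to the subspace cannot mask a nonzero class through cancellation. A secondary subtlety, needed for the first implication, is to pin down the precise functoriality of $\alpha$: that it depends only on the polarized Jacobian and that an automorphism acting as $-\id$ on $H_1(G,\RR)$ really induces $\wedge^3(-\id) = -\id$ on the quotient in which $\alpha$ lives.
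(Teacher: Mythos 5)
Your overall skeleton for the ``nontrivial'' direction (base cases $K_4$, $L_3$ plus minor operations plus Corey's forbidden-minor theorem) matches the paper, but the deletion step you correctly identify as the crux is a genuine gap, and the paper does not fill it in the generality you need. The paper only proves that vanishing of $\alpha$ descends under deletion of \emph{loops and parallel edges} (its \cref{deletion}), and even that restricted case requires a delicate argument: one first shows (\cref{sec:deletion-1}) that only generators of $\sP$ of the form $2\epsilon_i\wedge\pi(\gamma_i)\wedge\pi(\gamma_j)$ can occur in a relation expressing $\Theta(\Upsilon)$, and then a coefficient analysis exploiting $\gamma_g=\gamma_{g-1}-e_{g-1}+e_g$ handles the problematic generators involving the deleted cycle. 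For an arbitrary non-bridge deletion, the projection killing $\gamma_g$ sends $\sP_G$ to something strictly larger than $\sP_{G\setminus e}$ in general, so ``restriction carries $\alpha(G)$ to $\alpha(G\setminus e)$'' is exactly the cancellation problem you flag, and no argument is offered for it. The paper sidesteps full minor-closedness entirely by invoking \cite[Lemma~5.10]{corey2022ceresa}: a non-hyperelliptic-type graph contains $K_4$ or $L_3$ as a \emph{permissible} minor, i.e., one reachable by non-loop contractions and deletions of loops or parallel edges only. You would either need that lemma or a genuinely new argument for general deletion.

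The hyperelliptic direction has two further gaps. First, the target $N_{\RR}^{\wedge 3}/\sP$ is \emph{not} a real vector space: $\sP$ is a finitely generated subgroup of $N_{\RR}^{\wedge 3}$, so the quotient has torsion (compare $\RR/\ZZ$), and $\alpha(G)=-\alpha(G)$ only gives $2\alpha(G)=0$. Since the generators of $\sP$ are $2\epsilon_i\wedge\lambda_j\wedge\lambda_k$, a $2$-torsion class need not vanish. (The standard repair for honestly hyperelliptic curves is to base the Ceresa cycle at a fixed point of the involution, which makes the cycle itself zero; the $\wedge^3(-\id)$ argument alone does not suffice.) Second, and more seriously, a graph of hyperelliptic \emph{type} need not admit any involution at all --- only its Jacobian is isomorphic to that of a hyperelliptic curve --- and your proposed fix, that $\alpha$ factors through the polarized Jacobian, is a substantial unproven claim that the paper neither states nor uses; indeed the explicit representative $\Theta(\Upsilon)$ in \cref{eq:13} is built from the images $\pi(e)$ of individual edges and paths $\delta_i$, which is strictly finer data than the Gram matrix of the polarization. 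The paper instead routes through Corey's structure theory: after reducing to $2$-connected, $2$-edge-connected pieces, a hyperelliptic-type graph is a permissible minor of a ladder $L(T)$, and for ladders $\Theta(\Upsilon)$ vanishes identically by the combinatorial edge-pair criterion of \cref{prop:alpha-edge-char} (no pair of edges shares a cycle while each lies in a cycle avoiding the other).
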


\noindent It follows immediately from a result of Corey in \cite{corey2021tropical} that $\alpha(G) = 0$ if and only if it contains either of the graphs $K_4$ or $L_3$ as a minor (see \cref{fig:graphs}).  Moreover, by our \cref{prop:evaluation-converse}, the same holds for very general tropical curves.  Then \cref{prop:zharkov} implies that a very general tropical curve having either $K_4$ or $L_3$ as a graph minor has algebraically nontrivial Ceresa cycle.  In analogy to the classical question, we ask whether this ``very general'' hypothesis can be removed, i.e.,
\begin{question}
  Does every non-hyperelliptic-type tropical curve have algebraically nontrivial Ceresa cycle?
\end{question}

In \cite{corey2022ceresa}, the authors define a Ceresa--Zharkov class $\mathbf{w}_{\tau}(G)$ using the theory of mapping class groups and the Johnson homomorphism; the definition depends on a hyperelliptic involution $\tau$ of the genus-$g$ surface into which they embed $G$.  An immediate consequence of \cref{thm:A} and \cite[Theorem~5.11]{corey2022ceresa} is that a graph has trivial Ceresa period in our sense if and only if it has trivial Ceresa--Zharkov class.  In fact, in case $G$ is either $K_4$ or $L_3$, there is a choice of $\tau$ so that $\alpha(G) = \mathbf{w}_{\tau}(G)$ (after the appropriate identifications); compare \cref{ex:k4} with \cite[Proposition~5.7]{corey2022ceresa} and \cref{ex:l3} with \cite[Proposition~5.9]{corey2022ceresa}.  This generalizes an observation made in \cite[Remark~3.7]{corey2020ceresa}.  We believe that this should hold for all graphs:
\begin{conjecture}\label{conj:1}
  Let $G$ be a graph of genus $g$.  Then there exists a hyperelliptic involution $\tau$ of the surface of genus $g$ for which $\alpha(G) = \mathbf{w}_{\tau}(G)$.
\end{conjecture}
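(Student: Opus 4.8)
The plan is to place both invariants in one group and then to choose a hyperelliptic $\tau$ that makes them equal. First I would fix a ribbon structure on $G$, thickening it to a genus-$g$ handlebody $V_g$ with $\partial V_g = \Sigma_g$; then $L \defeq H_1(G;\ZZ) = H_1(V_g;\ZZ)$ is a Lagrangian of $H \defeq H_1(\Sigma_g;\ZZ)$ for the intersection pairing, and the rotation system recorded by the ribbon structure is exactly the combinatorial datum on which the construction of $\mathbf{w}_\tau(G)$ in \cite{corey2022ceresa} depends. The Ceresa period $\alpha(G)$ lives in a quotient of $\bigwedge^3 L$, whereas $\mathbf{w}_\tau(G)$, being a Johnson image, lives in $\bigwedge^3 H / H$ (the quotient by $H \wedge \omega$, $\omega = \sum_i a_i \wedge b_i$). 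The first step is to reconcile these targets: with $L = \langle a_1, \dots, a_g\rangle$ for a symplectic basis $a_1, b_1, \dots, a_g, b_g$, a direct check shows $\bigwedge^3 L \cap (H \wedge \omega) = 0$, so $\bigwedge^3 L$ injects into $\bigwedge^3 H / H$; I would then verify that this injection carries the tropical quotient of $\bigwedge^3 L$ isomorphically onto the subgroup in which $\mathbf{w}_\tau(G)$ is constrained to lie. Only after this identification does the equation $\alpha(G) = \mathbf{w}_\tau(G)$ typecheck.

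With the targets identified, I would compute both sides in the basis $a_1, b_1, \dots, a_g, b_g$. Unwinding the tropical-homology definition should express $\alpha(G)$ as an explicit combination of the wedges $a_i \wedge a_j \wedge a_k$ with coefficients read off from the cycle-and-edge data of $G$. For $\mathbf{w}_\tau(G)$ I would use Johnson's formula for the homomorphism together with the fact that a hyperelliptic $\tau$ acts as $-\id$ on $H$; this forces the relevant mapping class into the Torelli group and pins its Johnson image down in terms of the winding (framing) numbers of the embedding $\iota \maps G \hookrightarrow \Sigma_g$. The crucial point to establish is that these framing numbers are precisely the combinatorial intersection data appearing in $\alpha(G)$, so that choosing the ribbon structure (equivalently, $\iota$ and hence $\tau$) to be symmetric under the hyperelliptic involution aligns the two expressions term by term.

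Making this matching uniform over all $G$, rather than ad hoc, is the hard part. The base cases $K_4$ and $L_3$ are already settled here (compare \cref{ex:k4,ex:l3} with \cite[Propositions~5.7 and 5.9]{corey2022ceresa}), and by \cref{thm:A} together with \cite[Theorem~5.11]{corey2022ceresa} both invariants vanish on exactly the graphs of hyperelliptic type, so the two sides agree on the entire zero locus. To bootstrap from this to genuine equality I would attempt to show that $\alpha$ and $\mathbf{w}_\tau$ transform in the same way under the operations that assemble a general graph from its $3$-connected components — subdivision, one-point unions, and $2$-sums along the Tutte/SPQR decomposition — and that a hyperelliptic involution chosen on each piece glues to one on $G$. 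Equality for general $G$ would then follow by induction on the decomposition tree, the leaves being $K_4$, $L_3$, and hyperelliptic-type pieces. I expect the principal obstacle to be exactly this compatibility of involutions under gluing: $\mathbf{w}_\tau(G)$ depends on a global choice of $\tau$ on $\Sigma_g$, while $\alpha(G)$ is manifestly local in the edge variables, so controlling how the framing contributions of the separating pieces combine — and checking that no correction term is lost in passing to the quotient by $H$ — is the step I anticipate will require the most work.
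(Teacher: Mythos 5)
First, a point of order: the statement you are proving is \cref{conj:1}, which the paper explicitly leaves as a \emph{conjecture} --- there is no proof of it in the paper to compare against. The author only records the base cases ($\alpha(K_4) = \mathbf{w}_{\tau}(K_4)$ and $\alpha(L_3) = \mathbf{w}_{\tau}(L_3)$ for suitable $\tau$, by comparing \cref{ex:k4,ex:l3} with \cite[Propositions~5.7 and 5.9]{corey2022ceresa}), the fact that the two invariants have the same vanishing locus (an immediate consequence of \cref{thm:A} and \cite[Theorem~5.11]{corey2022ceresa}), and the expectation that the explicit representative $\Theta(\Upsilon)$ of \cref{prop:alpha-edge-char} should help resolve the question. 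Your proposal reproduces exactly these known ingredients and then outlines a strategy for the rest; it is a research plan, not a proof.

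The concrete gaps are the ones you yourself flag, and they are not minor. (1) The identification of targets is asserted rather than carried out: $\alpha(G)$ lives in $N_R^{\wedge 3}/\sP$, where $\sP$ is generated by the elements $2\epsilon_i \wedge \pi(\gamma_j) \wedge \pi(\gamma_k)$ of \cref{eq:24}, whose entries are degree-one polynomials in the edge variables $x_e$; the Johnson target $\bigwedge^3 H / H$ is a fixed integral quotient. Showing that your injection $\bigwedge^3 L \hookrightarrow \bigwedge^3 H/H$ matches the edge-length-dependent subgroup $\sP$ with the subgroup in which $\mathbf{w}_{\tau}(G)$ lives is precisely the ``appropriate identifications'' the paper alludes to, and it is substantive, not a direct check. (2) The term-by-term matching of the framing numbers of an embedding $G \hookrightarrow \Sigma_g$ with the coefficients $a_{i,j,k}(e,e')$ of $\Theta(\Upsilon)$ is stated as ``the crucial point to establish'' but no argument is given. (3) Agreement on the zero locus together with agreement on $K_4$ and $L_3$ does not propagate to equality of the nonzero values for general $G$; your proposed induction on the Tutte/SPQR decomposition would need a proof that both invariants transform identically under $2$-sums and that hyperelliptic involutions on the pieces glue to one on $\Sigma_g$, which you explicitly defer. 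Until those three steps are supplied, the conjecture remains open.
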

\noindent This result, if true, would be evidence of a close link between the tropical Ceresa cycle of \cite{zharkov2015c} and the tropical Ceresa class of \cite{corey2020ceresa}.

A useful tool in proving \cref{thm:A} is the explicit representative $\Theta(\Upsilon)$ for $\alpha(G)$ that we construct in \cref{sec:explicit-representative}.  It has coefficients that are homogeneous of degree 2 in the polynomial ring generated by edge length variables $x_e$ for each edge $e \in E(G)$.  By \cref{prop:alpha-edge-char}, there is an easy combinatorial condition on pairs of edges $(e,e')$ and triples of cycles $(\gamma_i,\gamma_j,\gamma_k)$ that tells us precisely when $\Theta(\Upsilon)$ contains a monomial $2 x_ex_{e'} \epsilon_i \wedge \epsilon_j \wedge \epsilon_k$ up to sign.  We expect this representative of $\alpha(G)$ to be helpful in the resolution of \cref{conj:1}.

\subsection*{Outline}  In \cref{sec:background}, we discuss the necessary background for tropical curves and their Jacobians, as well as for tropical homology and algebraic cycles on real tori with integral structures.  We finish the section by recalling Zharkov's definition of the tropical Ceresa cycle and introducing our invariant $\alpha(C)$.  In \cref{sec:definition}, we adapt these concepts so that they are well-defined as graph-theoretic invariants.  In particular, we define $\alpha(G)$ and show in \cref{sec:evaluation} that it specializes to $\alpha(C)$ when fixing edge lengths.  In \cref{sec:useful-tools}, we construct the special representative $\Theta(\Upsilon)$ as described above, proving in \cref{sec:basep-indep} that both $\alpha(G)$ and $\alpha(C)$ are independent of the choice of basepoint that goes into defining the Ceresa cycle.  Finally, in \cref{sec:forb-minor-char}, we use the tools that are developed in the previous sections to prove \cref{thm:A}.

\renewcommand*{\thetheorem}{\arabic{section}.\arabic{theorem}}

\subsection*{Acknowledgments}

The author thanks Farbod Shokrieh for his invaluable guidance throughout this project, Alexander Waugh for his helpful homology advice, and Samouil Molcho, Thibault Poiret, Felix R\"ohrle, and Jonathan Wise for various insightful conversations at the BIRS workshop on ``Curves: Algebraic, Tropical, and Logarithmic''.
\section{Background}\label{sec:background}

\subsection{Tropical curves}\label{sec:tropical-curves}

Let $G$ be a \textit{graph}, by which we mean a finite, connected multigraph with vertex and edge sets $V(G)$ and $E(G)$, respectively.  The \textit{valence} of a vertex $v$, denoted $\val(v)$, is the number of half-edges incident to $v$.  A \textit{leaf} is an edge incident to a vertex of valence one.  The \textit{genus} of $G$ is the quantity $\card E(G) - \card V(G) + 1$.

Fix an arbitrary orientation on the edges of $G$; then each edge $e$ has a head vertex $e^+$ and tail vertex $e^-$.  Given the additional data of a \textit{length function} $\ell \maps E(G) \to \RR_{>0}$, we construct a topological space 
\begin{equation*}
  C \defeq \left.\bigsqcup_{e \in E(G)} [0,\ell(e)] \right/{\sim},
\end{equation*}
where $[0,\ell(e)] \subset \RR$ is a closed interval of length $\ell(e)$ and the gluing relations on endpoints are given by the incidence relations in $G$, i.e., for all $e, f \in E(G)$ not necessarily distinct,
\begin{equation*}  
\begin{cases}
  (0,e) \sim (0,f) & \text{if } e^- = f^- \\
  (0,e) \sim (\ell(f),f) & \text{if } e^- = f^+ \\
  (\ell(e),e) \sim (\ell(f),f) & \text{if } e^+ = f^+.
\end{cases}
\end{equation*}
Observe that $C$ becomes a metric space with the ``shortest distance'' metric induced by edge lengths.  We say that a metric space $C$ obtained via this construction is a \textit{tropical curve}.  We further say that the edge-weighted graph $(G,\ell)$ is a \textit{model} for $C$, that $G$ \textit{underlies} $C$, and that $C$ \textit{overlies} $G$.  The \textit{genus} of $C$ is equivalently its first Betti number or the genus of any underlying graph.

We remark that our notion of tropical curve is what in the language of 1-dimensional rational polyhedral spaces should better be called a ``smooth'' tropical curve; see for instance \cite[Section~2.3]{gross2019tautological}.  Also, our definition departs slightly from that of \cite[Section~3.1]{mikhalkin2008tropical}, wherein leaves of a graph $G$ are prescribed to have infinite length.  This difference is of no critical importance: as we shall see in \cref{sec:contraction}, our main object of interest, the Ceresa period, is not affected by contracting all of the leaves.

\subsection{Real tori with integral structures}\label{sec:tropical-tori}

Given a ring $R$, an $R$-algebra $S$, and an $R$-module $M$, we write as a shorthand $M_S \defeq S \otimes_R M$ and abbreviate the primitive elements $s \otimes m$ of $M_S$ by $sm$.

Fix a free abelian group $N$ of rank $g$.  Then we may naturally regard $N$ as a (full-rank) lattice inside of the $g$-dimensional $\RR$-vector space $N_{\RR}$.  Fix another lattice $\Lambda \subset N_{\RR}$.  The quotient group $X \defeq N_{\RR}/\Lambda$ with the quotient topology and smooth structure induced from $N_{\RR}$ is a real torus of dimension $g$.  The tangent space at every point of $X$ is canonically isomorphic to the universal cover $N_{\RR}$, so the lattice $N$ defines what is known as an \textit{integral structure} on the real torus $X$.  We say that a tangent vector $v$ on $X$ is \textit{integral} if $v \in N$.

\subsection{Smooth tropical homology}\label{sec:smooth-trop-homol}

Recall that singular $q$-chains on a topological space $X$ are integral formal sums of continuous maps $\Delta^q \to X$, where $\Delta^q$ is the standard $q$-simplex 
\begin{equation*}
  \Delta^q \defeq \set*{(t_0,\ldots,t_q) \in \RR^{q+1} \given \sum_{i=0}^q t_i = 1,\, t_i \geq 0 \text{ for all $i$}}.
\end{equation*}
If $X$ is a smooth manifold, we may restrict our attention to \textit{smooth} simplices $\Delta^q \to X$; the boundary map on singular chains takes smooth $q$-chains to smooth $(q-1)$-chains, so we obtain the \textit{smooth singular chain complex} $S_{\bullet}(X)$.  We write $H_{q}(X)$ for the $q$-th homology of $S_{\bullet}(X)$.  It is well-known that the inclusion of $S_{\bullet}(X)$ into the usual singular chain complex is a chain homotopy equivalence, and therefore that $H_{q}(X)$ is isomorphic to the singular homology of $X$.

We now restrict to the case where $X = N_{\RR}/\Lambda$ is a real torus with integral structure $N$.  Say that a smooth $q$-simplex $\sigma$ in $X$ is \textit{affine} if it is obtained from an affine map $\RR^{q+1} \to N_{\RR}$ by restricting to $\Delta^q \subset \RR^{q+1}$ and pushing forward by the quotient $N_{\RR} \to X$.  Given points $u_i \in N_{\RR}$, we write $(u_0,\ldots,u_q)$ for the unique affine $q$-simplex defined by mapping the $i$-th vertex of $\Delta^q$ to $u_i$.  For any $\lambda \in \Lambda$, $(u_0+\lambda,\ldots,u_q+\lambda)$ and $(u_0,\ldots,u_q)$ represent the same affine simplex of the torus $X$.

The tropical homology of rational polyhedral spaces was introduced by \cite{itenberg2019tropical} and has been further studied in, e.g., \cite{jell2018lefschetz,jell2019superforms,gross2019sheaf}.  On a real torus $X$ with integral structure as above, a tropical $(p,q)$-simplex is a singular $q$-simplex $\sigma$ with the additional data of a $p$-fold wedge product of integral tangent vectors at some point in the image of $\sigma$; since we have canonically identified the tangent space at each point of $X$ with the universal cover, this extra data corresponds to an element of $N^{\wedge p}$, the $p$-th exterior power of $N$.  However, we would like to be able to integrate over tropical chains, so we modify the definition by replacing singular chains with smooth ones:
\begin{equation*}
S_{p,\bullet}(X) \defeq N^{\wedge p} \otimes_{\ZZ} S_{\bullet}(X).
\end{equation*}
We denote the $q$-th homology of this chain complex by $H_{p,q}(X)$.  By the universal coefficient theorem and the equivalence of smooth and singular homology noted above, $H_{p,q}(X)$ is isomorphic to the tropical homology in the sense of \cite{itenberg2019tropical}, and in fact,
\begin{equation*}
  H_{p,q}(X) \cong N^{\wedge p} \otimes_{\ZZ} H_{q}(X).
\end{equation*}
By the K\"unneth theorem, $H_{q}(X) \cong \Lambda^{\wedge q}$.  Fixing bases $\epsilon_1,\ldots,\epsilon_g$ for $N$ and $\lambda_1,\ldots,\lambda_g$ for $\Lambda$, the Eilenberg--Zilber map for singular homology determines explicit generators
\begin{equation}\label{eq:36}
\begin{split}
  H_{1,1}(X) &= \ZZ\genset*{\epsilon_i \otimes (0,\lambda_j) \given i,j \in [g]} \\
  H_{1,2}(X) &= \ZZ\genset*{\epsilon_i \otimes \big((0,\lambda_j,\lambda_j+\lambda_k) - (0,\lambda_k,\lambda_j+\lambda_k)\big) \given i,j,k \in [g],\, j < k},
\end{split}
\end{equation}
where $[g] = \set{1,\ldots,g}$.

\subsection{Tropical algebraic cycles}\label{sec:trop-algebr-cycl}

A \textit{(tropical) algebraic $k$-cycle} in a rational polyhedral space $X$ is a balanced, weighted, rational polyhedral complex of pure dimension $k$, defined up to refinement.  For details, we refer to \cite[Section~2]{allermann2010first} and \cite[Section~2]{allermann2016rational}.  In the case where $X = N_{\RR}/\Lambda$ is a real torus with integral structure $N$, a polyhedral complex in $X$ is a stratified closed subset that locally lifts to a polyhedral complex in $N_{\RR}$.  We call it \textit{weighted} if every facet is assigned an integer weight, and \textit{rational} if the affine hull of every face has a basis that is integral in the sense of \cref{sec:tropical-tori}.  Algebraic $k$-cycles form an abelian group $Z_k(X)$, and a morphism of rational polyhedral spaces $f \maps X \to Y$ induces a pushforward homomorphism $f_{*} \maps Z_k(X) \to Z_k(Y)$.  In analogy with the classical situation, there is a notion of rational equivalence of cycles, which is explored in \cite[Section~3]{allermann2016rational}.  Likewise, there is a notion of algebraic equivalence defined by \cite{zharkov2015c} and explored further in \cite[Section~5.1]{gross2019tautological}.

There exists a group homomorphism $\cyc \maps Z_k(X) \to H_{k,k}(X)$, called the \textit{cycle class map}, from algebraic $k$-cycles to tropical $(k,k)$-homology classes.  We note that $\cyc$ commutes with pushforward maps.  When $X$ is a real torus with integral structure as above and $A \in Z_1(X)$, we may describe an explicit representative of the homology class $\cyc(A)$ as follows.  Each facet $\sigma$ of $A$ is a line segment with rational slope, so we may lift it to a line segment in the universal cover $N_{\RR}$ with endpoints $u_{\sigma}$ and $u'_{\sigma}$ and primitive tangent vector $n_{\sigma} \in N$, i.e., $n_{\sigma}$ is a positive scalar multiple of $u'_{\sigma} - u_{\sigma}$ such that $a n_{\sigma} \in N$ for precisely $a \in \ZZ$.  Then 
\begin{equation}\label{eq:7}
  \cyc(A) \defeq \sum_{\sigma} n_{\sigma} \otimes (u_{\sigma},u'_{\sigma}),
\end{equation}
where the summation is taken over all facets $\sigma$ of $A$.  The balancing condition at the vertices of $A$ translates to $\cyc(A)$ having trivial boundary as a tropical $(1,1)$-chain.  For the general case, see \cite[Section~4.C]{jell2018lefschetz} or \cite[Section~5]{gross2019sheaf}.

\subsection{Tropical Jacobians}\label{sec:tropical-jacobians}

The tropical Jacobian is a real torus with integral structure that we can canonically associate to any tropical curve.  The original definition given by \cite[Section~6.1]{mikhalkin2008tropical} mimics that of the Jacobian of a complex algebraic curve; here we present an equivalent definition that avoids mention of $1$-forms.

Let $C$ be a tropical curve of genus $g$.  Given a model $(G,\ell)$ of $C$, let $C_1(G,\ZZ)$ denote the oriented simplicial 1-chains on $G$.  The corresponding simplicial homology $H_1(G,\ZZ) \subset C_1(G,\ZZ)$ of $G$ is isomorphic to the singular homology $H_1(C,\ZZ)$ of $C$; in what follows, we conflate $H_1(G,\ZZ)$ and $H_1(C,\ZZ)$ without further remark.  Furthermore, by the universal coefficient theorem, we may identify $H_1(C,\RR)^{\vee}$ with $H^1(C,\RR)$.  Fixing an orientation on the edges of $G$, we define a symmetric, bilinear map $[\cdot,\cdot] \maps C_1(G,\ZZ) \times C_1(G,\ZZ) \to \RR$ called the \textit{length pairing} by
\begin{equation}\label{eq:30}
  [e, e'] =
  \begin{cases}
    \ell(e) &\text{ if } e = e'  \\
    0 &\text{ otherwise}
  \end{cases}.
\end{equation}
For any refinement $G'$ of $G$, this pairing descends to $C_1(G',\ZZ) \times C_1(G',\ZZ)$ in a way that is compatible with the restriction of either entry to $H_1(C,\ZZ)$.  Likewise, $[\cdot,\cdot]$ formally extends to allow coefficients in $\RR$.
By the $\otimes$--$\Hom$ adjunction, $[\cdot,\cdot]$ induces a homomorphism $\pi \maps C_1(G,\ZZ) \to H^1(C,\RR)$ via $e \mapsto [e,\cdot]$.  Consider the lattice 
\begin{equation*}
  \Lambda \defeq \pi(H_1(C,\ZZ)) \subset H^1(C,\RR);
\end{equation*}
its dual lattice is defined by 
\begin{equation*}
  N \defeq \set*{u \in H^1(C,\RR) \given u(\gamma) \in \ZZ \text{ for all } \gamma \in H_1(C,\ZZ)} \cong H^1(C,\ZZ).
\end{equation*}
The \textit{tropical Jacobian} of $C$ is the real torus
\begin{equation*}
  \Jac(C) \defeq H^1(C,\RR)/\Lambda \cong N_{\RR}/\Lambda
\end{equation*}
with integral structure given by $N$.

Fix a point $v \in C$.  Following \cite[Section~4]{baker2011metric}, we define the \textit{Abel--Jacobi map} $\Phi_v \maps C \to \Jac(C)$ based at $v$ by
\begin{equation*}
  \Phi_v(w) \defeq \pi(\delta) = [\delta,\cdot],
\end{equation*}
where $\delta$ is a path in $C_1(G,\ZZ)$ from $v$ to $w$ for some model $(G,\ell)$ of $C$ containing both $v$ and $w$ as vertices.  We claim that $\Phi_v(w)$ is well-defined.  Indeed, the definition does not depend on $\delta$, since any other path $\delta'$ from $v$ to $w$ satisfies $\delta' - \delta \in H_1(C,\ZZ)$, hence $\pi(\delta') \equiv \pi(\delta) \pmod \Lambda$.  Moreover, it is independent of the choice of model $(G,\ell)$ because the length pairing is preserved under refinement.

\subsection{Tropical Ceresa cycle}\label{sec:trop-ceresa-cycle}

We identify $C$ with its fundamental algebraic cycle in $Z_1(C)$, i.e., the unique 1-cycle with support equal to all of $C$ and with weight one on every edge.  We also write $[-1] \maps \Jac(C) \to \Jac(C)$ for multiplication by $-1$.  Fix $v \in C$; following \cite{zharkov2015c}, we define the \textit{tropical Ceresa cycle} based at $v$ by
\begin{equation*}
  \Phi_{v,*}C - [-1]_{*}\Phi_{v,*}C \in Z_1(\Jac(C)),
\end{equation*}
where $\Phi_{v,*}$ and $[-1]_{*}$ are the induced pushforwards on algebraic cycles mentioned in \cref{sec:trop-algebr-cycl}.  Applying the cycle class map to the Ceresa cycle yields a tropical $(1,1)$-cycle $\Cer_v(C)$.  Choosing a model $(G,\ell)$ of $C$ so that $v \in V(G)$, one can show using \cref{eq:7} that
\begin{equation}\label{eq:5}
  \Cer_v(C) = \sum_{e \in E(G)} \ell(e)^{-1}\pi(e) \otimes \big( (\pi(\delta_e),\pi(\delta_e+e)) + (-\pi(\delta_e),-\pi(\delta_e+e)) \big),
\end{equation}
where $\delta_e$ is a path in $G$ from $v$ to $e^-$.  Since $[-1]_*$ acts trivially on the generators of $H_{1,1}(\Jac(C))$ given by \cref{eq:36} (and in fact induces the identity on $H_{k,k}(\Jac(C))$ for all $k$), it follows that $\Cer_v(C)$ is trivial in homology.

\subsection{Integration map}\label{sec:integration-map}

Let $\epsilon_1,\ldots,\epsilon_g$ denote a basis for $N$ and $z_1,\ldots,z_g$ the associated coordinate functions on $N_{\RR}$; then the differential 1-forms $dz_i$ on $N_{\RR}$ descend to $\Jac(C)$.  Recall from \cref{sec:smooth-trop-homol} that $S_{1,2}(\Jac(C))$ is generated by elements of the form $n \otimes \sigma$, where $n = \sum_{i=1} n_i\epsilon_i \in N$ and $\sigma \maps \Delta^2 \to \Jac(C)$ is a smooth $2$-simplex.  Given a differential form $\omega$ on $\Jac(C)$, we define the integral of $\omega$ over $\sigma$ by
\begin{equation*}
  \int_{\sigma} \omega \defeq \int_{\Delta^2} \sigma^{*}\omega.
\end{equation*}
Let $\binom{[g]}{3}$ denote the collection of all subsets of $[g]$ of cardinality $3$.  Given $I \in \binom{[g]}{3}$, we write $I = \set{i_1,i_2,i_3}$ for $i_1 < i_2 < i_3$ and adopt the multi-index notation $\epsilon_I \defeq \epsilon_{i_1} \wedge \epsilon_{i_2} \wedge \epsilon_{i_3}$.  Mimicking the ``determinantal 2-form'' for $K_4$ introduced by \cite{zharkov2015c}, we define the \textit{integration map} $\Theta \maps S_{1,2}(\Jac(C)) \to N_{\RR}^{\wedge 3}$ via
\begin{equation}\label{eq:38}
  \Theta(n \otimes \sigma) \defeq 2\sum_{I \in \binom{[g]}{3}} \int_{\sigma} \left( n_{i_1} dz_{i_2} \wedge dz_{i_3} - n_{i_2} dz_{i_1} \wedge dz_{i_3} + n_{i_3} dz_{i_1} \wedge dz_{i_2} \right) \epsilon_I.
\end{equation}
In the case that $\sigma = (u_0,u_1,u_2)$ is affine (see \cref{sec:smooth-trop-homol}), \cref{eq:38} reduces to
\begin{equation*}
  \Theta(n \otimes (u_0,u_1,u_2)) = \sum_{I \in \binom{[g]}{3}} \det(M_{I,[3]}) \epsilon_I,
\end{equation*}
where $M$ is the $g \times 3$ matrix whose columns are the entries of $n$, $u_1-u_0$, and $u_2-u_0$ with respect to the basis $\epsilon_1,\ldots,\epsilon_g$.  We see that $\Theta$ is coordinate-independent at least on affine $(1,2)$-chains by rewriting this expression in terms of exterior products:
\begin{equation}\label{eq:4}
  \Theta(n \otimes (u_0,u_1,u_2)) = n \wedge (u_1-u_0) \wedge (u_2-u_0).
\end{equation}

The boundary map $\partial \maps S_{1,3}(\Jac(C)) \to S_{1,2}(\Jac(C))$ sends $n \otimes \sigma \mapsto n \otimes \partial \sigma$.  Then by Stokes' theorem, $\Theta(n \otimes \partial \sigma) = 0$, so $\Theta$ descends to a map on homology $H_{1,2}(\Jac(C)) \to N_{\RR}^{\wedge 3}$.  Let 
\begin{equation*}
\sP \defeq \Theta(H_{1,2}(\Jac(C))) \subset N_{\RR}^{\wedge 3}.
\end{equation*}
Fixing a basis $\lambda_1,\ldots,\lambda_g$ for $\Lambda$, we may apply \cref{eq:4} to the affine generating set of $H_{1,2}(\Jac(C))$ given by \cref{eq:36} to find that
\begin{equation}\label{eq:12}
  \sP = \ZZ\genset*{2\epsilon_i \wedge \lambda_j \wedge \lambda_k \given i,j,k \in [g],\, j < k}.
\end{equation}

\subsection{Ceresa period}\label{sec:integr-invar}

We saw in \cref{sec:trop-ceresa-cycle} that $\Cer_v(C)$ is trivial in homology, so we may choose some $\Sigma \in S_{1,2}(\Jac(C))$ for which $\partial\Sigma = \Cer_v(C)$.  Define the \textit{Ceresa period} $\alpha(C)$ of $C$ to be the image of $\Theta(\Sigma)$ in the quotient $N_{\RR}^{\wedge 3}/\sP$.
We claim that $\alpha(C)$ is well-defined.  Indeed, given another $\Sigma' \in S_{1,2}(\Jac(C))$ with the same boundary, we have that $\Sigma - \Sigma' \in H_{1,2}(\Jac(C))$, hence $\Theta(\Sigma) \equiv \Theta(\Sigma') \pmod{\sP}$.  As the notation suggests, $\alpha(C)$ is independent of the basepoint $v \in C$; see \cref{cor:4}.  The following result is a straightforward generalization of \cite[Lemma~5]{zharkov2015c}.

\begin{proposition}\label{prop:zharkov}
  If the tropical Ceresa cycle of $C$ is algebraically trivial, then $\alpha(C) = 0$.
\end{proposition}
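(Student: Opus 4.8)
The plan is to exploit the fact, recorded just before the statement, that $\alpha(C)$ is independent of the choice of bounding chain $\Sigma$ with $\partial\Sigma = \Cer_v(C)$. It therefore suffices to produce a single convenient $\Sigma$ for which $\Theta(\Sigma)$ vanishes outright in $N_{\RR}^{\wedge 3}$ (not merely modulo $\sP$). The source of such a $\Sigma$ is the geometric witness to algebraic triviality: writing $A \defeq \Phi_{v,*}C - [-1]_{*}\Phi_{v,*}C$ for the Ceresa cycle, the hypothesis furnishes a family of $1$-cycles over a base tropical curve $\Gamma$ — concretely, a tropical $2$-cycle on $\Gamma \times \Jac(C)$ whose fibers over two points $p_0,p_1$ differ by $A$ — of which the translation families $B - T_w^{*}B$ are the prototype. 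Choosing a path $P$ from $p_0$ to $p_1$ in $\Gamma$ and pushing the restricted family forward to $\Jac(C)$ produces the \emph{trace} $2$-chain $\Sigma$.

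First I would make the trace construction precise and verify that $\partial\Sigma = \Cer_v(C)$. Since pushforward commutes with the boundary operator and the family is balanced over the interior of $P$, the boundary of $\Sigma$ telescopes to the difference of the two end fibers, which — using consistent lifts of the facets — is precisely the representative \eqref{eq:7} of $\cyc(A) = \Cer_v(C)$. Establishing this exact matching, rather than an identity that holds only up to a boundary, is the one point that requires genuine care.

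The heart of the argument is then a one-line wedge-product vanishing. After triangulating, each facet of $\Sigma$ is an affine $2$-simplex $(u_0,u_1,u_2)$ carrying a weight $n \in N$ equal to the primitive tangent direction of the moving fiber segment. Because the constant-parameter slices of the family run in the direction $n$, after projection to $\Jac(C)$ the cell $(u_0,u_1,u_2)$ has $n$ among its two spanning directions; hence by \eqref{eq:4},
\begin{equation*}
  \Theta\big(n \otimes (u_0,u_1,u_2)\big) = n \wedge (u_1-u_0)\wedge(u_2-u_0)
\end{equation*}
contains the factor $n$ twice and vanishes. (For the translation prototype this is the direct computation that the cylinder over $n \otimes (u_0,u_1)$ swept by $w$ contributes $2\,n\wedge(u_1-u_0)\wedge w$, which is zero because $u_1-u_0$ is a multiple of $n$.) Summing over facets gives $\Theta(\Sigma)=0$, whence $\alpha(C)=0$. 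By linearity of $\Theta$ and the well-definedness of $\alpha(C)$ modulo $\sP$, the conclusion extends to the whole subgroup of cycles algebraically equivalent to zero that such families generate.

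I expect the main obstacle to be the bookkeeping in the second step: realizing the pushforward of the family over $P$ as an honest smooth $(1,2)$-chain, confirming that its boundary is exactly $\Cer_v(C)$, and disposing of facets whose projection to $\Jac(C)$ degenerates to a segment — where the fiber does not move in the Jacobian and the contribution is already zero. Once the trace chain is in hand with its weights recorded, the vanishing of $\Theta$ is immediate from the repeated-factor phenomenon above.
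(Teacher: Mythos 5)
Your proposal is correct and takes essentially the same route as the paper: the paper simply cites Zharkov's Lemma~4 to produce exactly the bounding chain you construct by hand from the algebraic-equivalence family, namely an affine $(1,2)$-chain $\Sigma = \sum_i n_i \otimes \sigma_i$ with $\partial\Sigma = \Cer_v(C)$ in which each $n_i$ lies in the affine hull of $\sigma_i(\Delta^2)$. The conclusion is then the identical repeated-wedge-factor observation via \eqref{eq:4}, giving $\Theta(\Sigma) = 0$ on the nose and hence $\alpha(C) = 0$.
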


\begin{proof}
  Given the data of an algebraic equivalence between $\Phi_{v,*}C$ and $[-1]_{*}\Phi_{v,*}C$, the algebraic cycles whose difference is the Ceresa cycle, one may construct by \cite[Lemma~4]{zharkov2015c} an affine $(1,2)$-chain $\Sigma \defeq \sum_i n_i \otimes \sigma_i$ in $\Jac(C)$ for which 
\begin{equation*}
  \partial\Sigma = \cyc\left(\Phi_{v,*}C\right) - \cyc\left([-1]_{*}\Phi_{v,*}C\right) = \Cer_v(C),
\end{equation*}
and that satisfies the property that $n_i$ lies in the affine hull of $\sigma_i(\Delta^2)$ for each $i$.  Then $\Theta(n_i \otimes \sigma_i) = 0$ by \cref{eq:4}, hence $\Theta(\Sigma) = 0$.
\end{proof}

\section{Ceresa period of a graph}\label{sec:definition}

We would like to work with graphs rather than particular tropical curves.  To that end, we define ``universal'' versions of the Jacobian, homology, the Ceresa cycle, and the Ceresa period using edge weights in a polynomial ring.  We conclude this section by showing that these universal objects specialize to their tropical counterparts when we fix specific edge lengths.

\subsection{Universal Jacobian}\label{sec:universal-jacobian}

Given a graph $G$ of genus $g$ with arbitrary edge orientations, we define a polynomial ring $R \defeq \ZZ[x_e \given e \in E(G)]$.  Let $N \defeq H^1(G,\ZZ)$, and consider the free $R$-module $N_R \cong H^1(G,R) \cong \Hom(H_1(G,\ZZ),R)$.  We define a pairing $[\cdot,\cdot] \maps C_1(G,\ZZ) \times C_1(G,\ZZ) \to R$ via
\begin{equation}\label{eq:31}
  [e, e'] =
  \begin{cases}
    x_e &\text{ if } e = e'  \\
    0 &\text{ otherwise}
  \end{cases},
\end{equation}
which induces a homomorphism $\pi \maps C_1(G,\ZZ) \to N_R$ by $e \mapsto [e,\cdot]$.  Then $\Lambda \defeq \pi(H_1(G,\ZZ))$ is a free $\ZZ$-submodule of $N_R$ of rank $g$.  Formally, we define the \textit{universal Jacobian} $\Jac(G)$ of $G$ to be the triple $(N_{\RR}/\Lambda, N_{\RR}, N)$, although we shall conflate $\Jac(G)$ with the quotient group $N_{\RR}/\Lambda$ whenever the other data are clear from context.

It should be noted that, although we use the symbol $\Jac(G)$, our universal Jacobian is distinct from the Jacobian of a finite graph, also known as the abelian sandpile group or critical group, which has been studied in various contexts in physics, arithmetic geometry, and graph theory.  For more on this subject, see, for instance, \cite{bak1988self,lorenzini1989arithmetical,dhar1990self,gabrielov1993abelian,bacher1997lattice,nagnibeda1997jacobian,baker2007riemann}.  

\subsection{Universal homology}\label{sec:universal-homology}

We define homology theories on $\Jac(G)$ as follows.  Let $C_q(\Jac(G))$ denote the free abelian group generated by $N_R^{q+1}/{\sim}$, equivalence classes of ordered $q$-simplices, where we identify 
\begin{equation*}
(u_0+\lambda,\ldots,u_q+\lambda) \sim (u_0,\ldots,u_q)
\end{equation*}
for all $\lambda \in \Lambda$.  This becomes a chain complex $C_{\bullet}(\Jac(G))$ via the usual boundary maps
\begin{equation*}
  \partial(u_0,\ldots,u_q) = \sum_{i=0}^q (-1)^i(u_0,\ldots,\hat u_i,\ldots,u_q),
\end{equation*}
where $\hat{\cdot}$ means that the corresponding entry is omitted.  Let $H_q(\Jac(G))$ denote the $q$-th homology of $C_{\bullet}(\Jac(G))$.  We further define $C_{p,\bullet}(\Jac(G)) \defeq N^{\wedge p} \otimes C_{\bullet}(\Jac(G))$, with corresponding $q$-th homology $H_{p,q}(\Jac(G))$.We call a $(p,q)$-chain \textit{degenerate} if every $q$-simplex that it contains has repeated entries.

The universal coefficient theorem yields 
\begin{equation}\label{UCT}
  H_{p,q}(\Jac(G)) \cong N^{\wedge p} \otimes_{\ZZ} H_q(\Jac(G)).
\end{equation}
Comparing the following technical result to \cref{eq:36}, this says that $\Jac(G)$ has the homology we expect of a $g$-dimensional real torus.

\begin{lemma}\label{homology-generators}\leavevmode
  Fix bases $\epsilon_1,\ldots,\epsilon_g$ of $N$ and $\lambda_1,\ldots,\lambda_g$ of $\Lambda$.  Then
\begin{enumerate}
\item $H_{1,1}(\Jac(G)) = \ZZ\genset*{\epsilon_i \otimes (0,\lambda_j) \given i,j \in [g]}$ and
\item $H_{1,2}(\Jac(G)) = \ZZ\genset*{\epsilon_i \otimes \left((0,\lambda_j,\lambda_j + \lambda_k) - (0,\lambda_k,\lambda_j+\lambda_k)\right) \given i,j,k \in [g],\, j < k}$.
\end{enumerate}
\end{lemma}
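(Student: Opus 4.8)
The plan is to reduce everything to a computation of group homology. Since the universal coefficient theorem \eqref{UCT} already gives $H_{1,q}(\Jac(G)) \cong N \otimes_{\ZZ} H_q(\Jac(G))$ and $\epsilon_1,\ldots,\epsilon_g$ is a basis of $N$, the only real content of the lemma is to compute $H_1(\Jac(G))$ and $H_2(\Jac(G))$ together with explicit bases: I must show $H_1(\Jac(G)) = \ZZ\genset*{(0,\lambda_j) \given j \in [g]}$ and $H_2(\Jac(G)) = \ZZ\genset*{(0,\lambda_j,\lambda_j+\lambda_k)-(0,\lambda_k,\lambda_j+\lambda_k) \given j<k}$, and then tensor these bases with the $\epsilon_i$ to obtain (a) and (b) verbatim. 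Note that $\Jac(G)$ is not a genuine topological torus — the vertices $u_i$ carry coefficients in the polynomial ring $R$, not in $\RR$ — so I cannot simply cite the homology of a torus; instead I will recognize $C_\bullet(\Jac(G))$ as a complex computing the group homology $H_\bullet(\Lambda;\ZZ)$.

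To do this, let $E_q \defeq \ZZ[N_R^{q+1}]$ be the free abelian group on ordered $(q+1)$-tuples of points of $N_R$, with the simplicial boundary. The group $\Lambda$ acts on $N_R$ by translation, hence diagonally on tuples; this action is free, since $u_i+\lambda = u_i$ forces $\lambda=0$, so each $E_q$ is a free $\ZZ[\Lambda]$-module (a basis being any set of orbit representatives) and the boundary maps are $\Lambda$-equivariant. Moreover $E_\bullet$ is acyclic in positive degrees with $H_0(E_\bullet)=\ZZ$: fixing any basepoint $p \in N_R$, the cone operator $h(u_0,\ldots,u_q)\defeq (p,u_0,\ldots,u_q)$ satisfies $\partial h + h\partial = \id$ on positive-degree chains. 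Hence $E_\bullet$ is a free resolution of $\ZZ$ over $\ZZ[\Lambda]$. By construction, the relation $(u_0+\lambda,\ldots,u_q+\lambda)\sim(u_0,\ldots,u_q)$ identifies $C_\bullet(\Jac(G))$ with the coinvariants $E_\bullet \otimes_{\ZZ[\Lambda]} \ZZ$, so $H_q(\Jac(G)) \cong \Tor_q^{\ZZ[\Lambda]}(\ZZ,\ZZ) = H_q(\Lambda;\ZZ)$. Since $\Lambda \cong \ZZ^g$, the standard computation gives $H_q(\Lambda;\ZZ) \cong \bigwedge^q \Lambda$, a free abelian group; in particular $H_1 \cong \Lambda$ and $H_2 \cong \bigwedge^2 \Lambda$.

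It remains to match the asserted generators with this identification. In degree $1$, the chain $(0,\lambda_j)$ is a cycle because $(\lambda_j)\sim(0)$, and under $H_1(\Lambda)\cong\Lambda^{\mathrm{ab}}=\Lambda$ it corresponds to the inhomogeneous bar class of $\lambda_j$; thus these form a basis. In degree $2$, I would verify directly that $(0,\lambda_j,\lambda_j+\lambda_k)-(0,\lambda_k,\lambda_j+\lambda_k)$ is a cycle by expanding its boundary and applying the translations $(\lambda_j,\lambda_j+\lambda_k)\sim(0,\lambda_k)$ and $(\lambda_k,\lambda_j+\lambda_k)\sim(0,\lambda_j)$, after which all terms cancel; this chain is the standard triangulation of the parallelogram spanned by $\lambda_j$ and $\lambda_k$, and so represents $\lambda_j\wedge\lambda_k \in \bigwedge^2\Lambda$. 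As $j<k$ ranges over all pairs, these give a basis of $H_2(\Jac(G))$. Applying \eqref{UCT} with $p=1$ then produces exactly the generating sets in (a) and (b).

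The step I expect to require the most care is the identification $C_\bullet(\Jac(G)) = E_\bullet \otimes_{\ZZ[\Lambda]}\ZZ$ together with the verification that $E_\bullet$ is a free $\ZZ[\Lambda]$-resolution of $\ZZ$; this is where the unusual nature of $\Jac(G)$ — a quotient of the $R$-module $N_R$ by the rank-$g$ lattice $\Lambda$, rather than an honest torus — must be handled. Everything downstream is then the standard homology of $\ZZ^g$ together with the short, mechanical check that the exhibited low-degree cycles are the usual generators of $\bigwedge^q\Lambda$.
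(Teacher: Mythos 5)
Your proposal is correct, and it reaches the conclusion by a genuinely different route from the paper. The paper's proof is topological: it embeds $N_R$ into an infinite-dimensional real vector space $M_{\RR}$, forms the honest topological quotient $J = M_{\RR}/\Lambda \simeq \Lambda_{\RR}/\Lambda \times W$ with $W$ contractible, and then shows that the chain map $C_{\bullet}(\Jac(G)) \to S_{\bullet}(J)$ is a split injection (via an affine-ization followed by a coordinatewise floor retraction), so that the known torus generators in degrees $1$ and $2$ pull back. Your argument is purely algebraic: you observe that $E_q = \ZZ[N_R^{q+1}]$ with the diagonal translation action is a complex of free $\ZZ[\Lambda]$-modules (freeness of the action is exactly the point where the lattice condition enters), that the cone operator makes it a resolution of $\ZZ$, and that the defining relation of $C_{\bullet}(\Jac(G))$ identifies it with the coinvariants, so $H_q(\Jac(G)) \cong H_q(\Lambda;\ZZ) \cong \Lambda^{\wedge q}$. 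This avoids the choice of complementary subspace, the topology on $M_{\RR}$, and the floor-function splitting, and it computes the homology in all degrees rather than just $q \in \{1,2\}$; what it gives up is the direct link to the singular homology of the torus that the paper leans on elsewhere (e.g.\ the analogy with \cref{eq:36}). Your final identification of $(0,\lambda_j,\lambda_j+\lambda_k)-(0,\lambda_k,\lambda_j+\lambda_k)$ with $\lambda_j\wedge\lambda_k$ is asserted via the standard bar-resolution/Pontryagin-product dictionary rather than spelled out, but this is the same level of detail at which the paper invokes the Eilenberg--Zilber map, and the preceding cycle verification is correct.
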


\begin{proof}
  By restriction of scalars, the $R$-module $N_R$ inherits the structure of a free $\ZZ$-module of infinite rank, which we denote by $M$.  Then $M_{\RR}$ is an infinite-dimensional $\RR$-vector space that naturally contains $M$ as a $\ZZ$-submodule.  We endow the $g$-dimensional subspace $\Lambda_{\RR}$ of $M_{\RR}$ with the Euclidean topology.  Choose a subspace $W$ complementary to $\Lambda_{\RR}$ and give it the Euclidean norm with respect to some basis $w_1,w_2,\ldots$.  Give $M_{\RR} = \Lambda_{\RR} \times W$ the product topology.  The action of $\Lambda$ on $\Lambda_{\RR}$ by translation extends to an action on $M_{\RR}$; define $J \defeq M_{\RR}/\Lambda$ with the quotient topology.  Equivalently, we may write $J = \Lambda_{\RR}/\Lambda \times W$.  It is a straightforward exercise to show that $W \cong \varinjlim_n \RR^n$ is contractible via a straight-line homotopy to $0$; therefore, the singular homology $H_q(J)$ of $J$ is isomorphic to that of the $g$-dimensional real torus $\Lambda_{\RR}/\Lambda$.  In particular, just as in \cref{sec:smooth-trop-homol}, one can show using the Eilenberg--Zilber map that $H_1(J)$ is freely generated by the affine simplices $(0,\lambda_i)$ in $M_{\RR}$ for each $i$, while $H_2(J)$ is freely generated by $(0,\lambda_i,\lambda_i+\lambda_j) - (0,\lambda_j,\lambda_i+\lambda_j)$ for $i < j$.

  Let $S_{\bullet}$ denote the singular chain complex functor.  There is a natural surjection $S_{\bullet}(M_{\RR}) \to S_{\bullet}(J)$ that identifies simplices in $M_{\RR}$ that are $\Lambda$-translates of each other, so we may refer to simplices of $J$ by their representatives in $M_{\RR}$.  There is an injective chain map $\Phi \maps C_{\bullet}(\Jac(G)) \to S_{\bullet}(J)$ sending the ordered $q$-simplex $(u_0,\ldots,u_q)$ to the affine singular simplex $(u_0,\ldots,u_q)$; we naturally identify $C_{\bullet}(\Jac(G)) \cong \im\Phi$.  It is clear that the given generators for $H_q(J)$ for $q \in \set{1,2}$ are in $\im\Phi$, so the induced map $H_q(\Jac(G)) \to H_q(J)$ is surjective.  We claim that it is in fact an isomorphism.

  It suffices to show that the injection $\Phi$ splits, since then it must induce an injection on homology.  Indeed, let $A_{\bullet}(J) \subset S_{\bullet}(J)$ denote the subcomplex generated by affine simplices; by construction, $\im\Phi \subset A_{\bullet}(J)$.  We define chain maps $F \maps S_{\bullet}(J) \to A_{\bullet}(J)$ sending a singular simplex to the affine simplex with the same vertices and $G \maps A_{\bullet}(J) \to \im\Phi$ sending $(u_0,\ldots,u_q) \mapsto (\floor{u_0},\ldots,\floor{u_q})$, where $\floor{\cdot}$ applies the usual floor function to each coordinate of the argument with respect to the basis $\lambda_1,\ldots,\lambda_g,w_1,w_2,\ldots$ of $M_{\RR}$.
\begin{equation*}
\begin{tikzcd}
  C_{\bullet}(\Jac(G)) \ar[d, "\cong"] \ar[rrd, "\Phi"] && \\
  \im\Phi \ar[r, hook] & A_{\bullet}(J) \ar[r, hook]\ar[l, bend left=30, "G"] & S_{\bullet}(J) \ar[l, bend left=30, "F"]  
\end{tikzcd}
\end{equation*}
The restriction of $GF$ to $\im\Phi$ is the identity, as desired.

Since $H_q(\Jac(G)) \to H_q(J)$ is an isomorphism for $q \in \set{1,2}$, the chosen generators of $H_q(J)$ pull back to corresponding generators for $H_q(\Jac(G))$.  The final result then follows from \cref{UCT}.
\end{proof}

Suppose that $G$ and $G'$ are graphs with universal Jacobians $\Jac(G) = N_R/\Lambda$ and $\Jac(G') = N'_{R'}/\Lambda'$, respectively.  Then any $\ZZ$-linear map $f \maps N_R \to N'_{R'}$ for which $f(\Lambda) \subset \Lambda'$ and $f(N) \subset N'$ induces a chain map $f_{*} \maps C_{p,\bullet}(\Jac(G)) \to C_{p,\bullet}(\Jac(G'))$ via
\begin{equation}\label{eq:23}
f_{*}(n_1 \wedge \ldots \wedge n_p \otimes (u_0,\ldots,u_q)) \defeq f(n_1) \wedge \ldots \wedge f(n_p) \otimes (f(u_0),\ldots,f(u_q)).
\end{equation}

\subsection{Universal Ceresa cycle}

Fix a basepoint $v \in V(G)$ and define the \textit{Abel--Jacobi map} $\Phi_v \maps V(G) \to \Jac(G)$ by sending $w \mapsto \pi(\delta)$, where $\delta$ is a path from $v$ to $w$.  Just as for the Abel--Jacobi map associated to a tropical curve, $\Phi_v$ is well-defined modulo $\Lambda = \pi(H_1(G,\ZZ))$.  By an abuse of notation, we also define $\Phi_v \maps E(G) \to C_{1,1}(\Jac(G))$ by
\begin{equation}\label{eq:32}
  \Phi_v(e) \defeq x_e^{-1}\pi(e) \otimes (\pi(\delta_e),\pi(\delta_e + e)),
\end{equation}
where $\delta_e$ is a path from $v$ to $e^-$.  Notice that $\pi(\delta_e)$ and $\pi(\delta_e + e)$ are representatives in $N_R$ of $\Phi_v(e^-)$ and $\Phi_v(e^+)$ respectively.

Taking \cref{eq:5} as inspiration, we define the \textit{(universal) Ceresa cycle} of $G$ based at $v$ by
\begin{equation}\label{ceresa-cycle}
  \Cer_v(G) \defeq \sum_{e\in E(G)} \Phi_v(e) - [-1]_{*}\left(\sum_{e\in E(G)} \Phi_v(e)\right),
\end{equation}
where the inversion map $[-1] \maps N_R \to N_R$ induces a pushforward as in \cref{eq:23}.
\begin{lemma}\label{lem:4}
  $\Cer_v(G)$ is independent of the orientation on the edges up to adding the boundary of a degenerate $(1,2)$-chain.
\end{lemma}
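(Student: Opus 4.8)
The plan is to reduce everything to a single edge flip. Since an arbitrary reorientation of $G$ is obtained by reversing the edges in some subset one at a time, and since the boundaries of degenerate $(1,2)$-chains form a subgroup of $C_{1,2}(\Jac(G))$ (a sum of degenerate chains is degenerate and $\partial$ is additive), it suffices to show that reversing the orientation of one edge $e$ to $e' = -e$ alters $\Cer_v(G)$ by the boundary of a degenerate $(1,2)$-chain. The key structural observation that makes the reduction legitimate is that the length pairing \eqref{eq:31}, and hence $\pi$, is bilinear on $C_1(G,\ZZ)$ and so depends only on the underlying $1$-chain, not on the chosen orientations. Consequently, for every edge $f \neq e$, keeping the same orientation on $f$ and the same path $\delta_f$, the term $\Phi_v(f)$ in \eqref{ceresa-cycle} is literally unchanged by the flip; only the $e$-term moves. (In particular this is why the single-flip argument does not care about the orientations of the other edges, so the flips may be composed freely.)

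Next I would track the $e$-term. Writing $n_e \defeq x_e^{-1}\pi(e)$, one checks that $n_e$ is the integral functional $\gamma \mapsto (\text{coefficient of } e \text{ in } \gamma)$, so $n_e \in N$ and $\Phi_v(e) = n_e \otimes (u_0,u_1)$ with $u_0 = \pi(\delta_e)$ and $u_1 = \pi(\delta_e+e)$. Flipping gives $x_{e'} = x_e$ and $\pi(e') = -\pi(e)$, hence $n_{e'} = -n_e$; choosing the path $\delta_{e'} = \delta_e + e$ (a path from $v$ to $e^+$, the tail of $e'$) yields $\Phi_v(e') = -n_e \otimes (u_1,u_0)$. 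The crux is then the simplex-swap identity
\begin{equation*}
  (u_0,u_1) + (u_1,u_0) = \partial\big((u_0,u_1,u_0) + (u_0,u_0,u_0)\big),
\end{equation*}
whose right-hand side is a boundary of degenerate $2$-simplices. Tensoring with $n_e$ shows that $\Phi_v(e') - \Phi_v(e) = -\partial D_e$, where $D_e \defeq n_e \otimes \big((u_0,u_1,u_0)+(u_0,u_0,u_0)\big)$ is a degenerate $(1,2)$-chain.

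Finally I would handle the inversion term. Since $[-1]_*$ is a chain map (so it commutes with $\partial$) and carries simplices with repeated entries to simplices with repeated entries, the chain $[-1]_* D_e$ is again degenerate. Writing $X \defeq \Phi_v(e') - \Phi_v(e) = -\partial D_e$, the total change in $\Cer_v(G)$ under the flip is $X - [-1]_* X = \partial\big([-1]_* D_e - D_e\big)$, the boundary of a degenerate $(1,2)$-chain, which is exactly what we need. The one genuinely delicate point — the part I would expect to be the main obstacle — is the bookkeeping around the flip itself: arranging the path choice $\delta_{e'} = \delta_e + e$ so that $\Phi_v(e')$ is precisely $-n_e$ tensored with the \emph{reversed} simplex $(u_1,u_0)$, and then verifying that the sign from $n_{e'} = -n_e$ combines with the sign produced by the simplex swap so that the difference lands in the degenerate-boundary subgroup rather than contributing a spurious nondegenerate term. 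Everything else (the subgroup property, the invariance of the $f \neq e$ terms, and the compatibility of $[-1]_*$ with $\partial$ and with degeneracy) is routine.
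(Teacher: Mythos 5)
Your proof is correct and follows essentially the same route as the paper: reduce to a single edge flip, take $\delta_{\bar e} = \delta_e + e$, and absorb the reversed simplex via the identity $(u_0,u_1)+(u_1,u_0) = \partial\big((u_0,u_1,u_0)+(u_0,u_0,u_0)\big)$, which is exactly the degenerate boundary the paper writes down. Your additional remarks on composing flips, the invariance of the $f \neq e$ terms, and the compatibility of $[-1]_*$ with $\partial$ and degeneracy are points the paper leaves implicit, but they are routine and do not change the argument.
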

\begin{proof}
  Let $\bar e$ be the edge $e$ with the opposite orientation.  We observe that $\bar e = -e$ in $C_1(G,\ZZ)$ and $\bar{e}^- = e^+$, so we may take $\delta_{\bar{e}} = \delta_e + e$.  We also have $x_{\bar{e}} = x_e$.  Then
\begin{align*}
  \Phi_v(\bar e)
  &= x_{\bar{e}}^{-1}\pi(\bar{e}) \otimes (\pi(\delta_{\bar{e}}),\pi(\delta_{\bar{e}}+\bar{e}))\\
  &= -x_e^{-1}\pi(e) \otimes (\pi(\delta_e + e),\pi(\delta_e + e + \bar{e}))\\
  &= -x_e^{-1}\pi(e) \otimes (\pi(\delta_e + e),\pi(\delta_e))\\
  &= \Phi_v(e) - \partial\left(x_e^{-1}\pi(e) \otimes \big((\pi(\delta_e),\pi(\delta_e+e),\pi(\delta_e)) + (\pi(\delta_e),\pi(\delta_e),\pi(\delta_e))\big)\right).\qedhere
\end{align*}
\end{proof}

\begin{lemma}\label{lem:1}
  $\sum_{e\in E(G)} \Phi_v(e) \in H_{1,1}(\Jac(G))$.
\end{lemma}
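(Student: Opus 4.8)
The plan is to prove membership by showing directly that $\sum_{e} \Phi_v(e)$ is a \emph{cycle} in the chain complex $C_{1,\bullet}(\Jac(G))$: since $H_{1,1}(\Jac(G))$ is by construction the first homology of this complex, the content of the statement is exactly that the $(1,0)$-boundary vanishes. A preliminary point worth recording is that each $\Phi_v(e)$ is genuinely a $(1,1)$-chain, i.e.\ that its first tensor factor is integral. Writing $n_e \defeq x_e^{-1}\pi(e)$, the pairing \cref{eq:31} shows that $\pi(e)$ evaluates on a cycle $\gamma \in H_1(G,\ZZ)$ as $x_e$ times the coefficient of $e$ in $\gamma$; hence $n_e$ is precisely the restriction to $H_1(G,\ZZ)$ of the dual cochain $e^{*} \in C^1(G,\ZZ)$, so $n_e \in N = H^1(G,\ZZ)$ and $\sum_e \Phi_v(e) = \sum_e n_e \otimes (\pi(\delta_e),\pi(\delta_e+e))$ lies in $C_{1,1}(\Jac(G)) = N \otimes C_1(\Jac(G))$.

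Next I would compute the boundary. Using $\partial(n \otimes \sigma) = n \otimes \partial\sigma$ together with $\partial(\pi(\delta_e),\pi(\delta_e+e)) = (\pi(\delta_e+e)) - (\pi(\delta_e))$, and recalling from the remark following \cref{eq:32} that $\pi(\delta_e)$ and $\pi(\delta_e+e)$ represent $\Phi_v(e^-)$ and $\Phi_v(e^+)$ modulo $\Lambda$, each term contributes the $0$-simplices $(\Phi_v(e^+))$ and $(\Phi_v(e^-))$. Collecting terms by vertex, the coefficient of $(\Phi_v(w))$ becomes
\begin{equation*}
  c_w \defeq \sum_{e :\, e^+ = w} n_e - \sum_{e :\, e^- = w} n_e \in N.
\end{equation*}
Since the whole boundary is then $\sum_{w} c_w \otimes (\Phi_v(w))$, it suffices to show that every $c_w$ vanishes; this side-steps any worry about distinct vertices mapping to a common point of the torus, because each summand is zero on its own.

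The crux is identifying $c_w$ as a coboundary. Because the simplicial coboundary $d \maps C^0(G,\ZZ) \to C^1(G,\ZZ)$ is dual to $\partial e = e^+ - e^-$, one has $dw^{*} = \sum_{e^+=w} e^{*} - \sum_{e^-=w} e^{*}$, so $c_w$ is exactly the image of $dw^{*}$ under the restriction map $C^1(G,\ZZ) \to \Hom(H_1(G,\ZZ),\ZZ) = N$. But any coboundary annihilates cycles: for $\gamma \in H_1(G,\ZZ)$ we have $(dw^{*})(\gamma) = w^{*}(\partial\gamma) = w^{*}(0) = 0$. Hence $c_w = 0$ for every $w$, the boundary vanishes, and $\sum_e \Phi_v(e)$ represents a class in $H_{1,1}(\Jac(G))$. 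I expect the computation of $\partial$ and the reorganization by vertex to be purely mechanical; the genuinely load-bearing observations are the integrality of $n_e$ (so the expression is an honest integral $(1,1)$-chain) and the elementary but decisive fact that coboundaries kill cycles, which is what forces each vertex coefficient to zero.
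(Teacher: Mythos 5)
Your proof is correct and follows essentially the same route as the paper's: compute the boundary of $\sum_{e}\Phi_v(e)$, group the resulting $0$-simplices by vertex, and show that each vertex coefficient vanishes because cycles have zero boundary. Your cochain packaging --- identifying the coefficient at $w$ with the restriction of the coboundary $dw^{*}$, which annihilates cycles --- neatly absorbs the paper's separate treatment of loop edges and its appeal to \cref{lem:4} for reorienting edges away from $w$, but the underlying mechanism is identical.
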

\begin{proof}
  We compute
\begin{equation*}
  \partial \Phi_v(e) = x_e^{-1}\pi(e) \otimes ((\pi(\delta_e+e)) - (\pi(\delta_e))).
\end{equation*}
Then $\partial\left(\sum_{e\in E(G)}\Phi_v(e)\right)$ is supported on the subset $\Phi_v(V(G)) \subset \Jac(G)$, so it suffices to fix $w \in V(G)$ and show that the part of the boundary supported on $\Phi_v(w)$ vanishes.  The only edges that have a boundary component at $\Phi_v(w)$ are those that are incident to $w$.  We further restrict our attention to the non-loop edges, since if $e$ is a loop edge, $\pi(e) \in \Lambda$, hence $\partial \Phi_v(e) = 0$.

Let $\delta$ be a path from $v$ to $w$, and label the non-loop edges adjacent to $w$ by $e_1,\ldots,e_n$.  By \cref{lem:4}, up to adding a boundary, we may assume that each $e_i$ is oriented away from $w$.  We write $\Phi_v(e_i) = x_{e_i}^{-1}\pi(e_i) \otimes (\pi(\delta),\pi(\delta+e_i))$.  This contributes a boundary component of $-x_{e_i}^{-1}\pi(e_i) \otimes (\pi(\delta))$ at $\Phi_v(w)$, so we need only show that $\sum_{i=1}^n x_{e_i}^{-1}\pi(e_i) = 0$, or equivalently, that $\sum_{i=1}^n x_{e_i}^{-1}[e_i,\gamma] = 0$ for all $\gamma \in H_1(G,\ZZ)$.  Indeed, we may write $\gamma = \sum_{i=1}^n c_i e_i + \gamma'$, where $\gamma'$ is supported away from the edges $e_1,\ldots,e_n$.  The fact that $\partial\gamma = 0$ forces $\sum_{i=1}^n c_i = 0$, so
\begin{equation*}
  \sum_{i=1}^n x_{e_i}^{-1}[e_i,\gamma] = \sum_{i=1}^n c_i = 0.\qedhere
\end{equation*}
\end{proof}

\begin{lemma}
  $\Cer_v(G)$ is trivial in homology.
\end{lemma}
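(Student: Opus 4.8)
The plan is to mirror the argument sketched for the tropical curve at the end of \cref{sec:trop-ceresa-cycle}. Set $S \defeq \sum_{e \in E(G)} \Phi_v(e)$. By \cref{lem:1}, $S$ is a $(1,1)$-cycle, so it represents a well-defined class $[S] \in H_{1,1}(\Jac(G))$. Since $[-1]_{*}$ is the chain map induced as in \cref{eq:23}, it descends to homology, and $\Cer_v(G) = S - [-1]_{*}S$ represents the class $[S] - [-1]_{*}[S]$. It therefore suffices to prove that $[-1]_{*}$ acts as the identity on $H_{1,1}(\Jac(G))$.

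Under the isomorphism $H_{1,1}(\Jac(G)) \cong N \otimes_{\ZZ} H_1(\Jac(G))$ of \cref{UCT}, the formula \cref{eq:23} shows that $[-1]_{*}$ factors as $([-1]|_N) \otimes ([-1]_{*})$, i.e.\ as multiplication by $-1$ on $N$ tensored with the map induced by the antipodal map on $H_1(\Jac(G))$. Conceptually, the antipode acts by $-1$ on the first homology of a torus, so the two sign changes cancel; I would, however, verify this concretely on the generators of \cref{homology-generators}. There, \cref{eq:23} gives $[-1]_{*}(\epsilon_i \otimes (0,\lambda_j)) = -\epsilon_i \otimes (0,-\lambda_j)$, so it remains to check that $(0,-\lambda_j)$ and $-(0,\lambda_j)$ are homologous.

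The one point needing care --- the main, if routine, obstacle --- is establishing $[(0,-\lambda_j)] = -[(0,\lambda_j)]$ directly in the complex $C_\bullet(\Jac(G))$, whose homology is built from affine simplices. Translating by $-\lambda_j \in \Lambda$ identifies $(0,-\lambda_j)$ with $(\lambda_j,0)$, and the degenerate $2$-simplex $(0,\lambda_j,0)$ has boundary $(\lambda_j,0) - (0,0) + (0,\lambda_j)$. As the constant simplex satisfies $(0,0) = \partial(0,0,0)$ and is hence null-homologous, we obtain $[(\lambda_j,0)] = -[(0,\lambda_j)]$, so $[(0,-\lambda_j)] = -[(0,\lambda_j)]$. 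This is exactly the type of degenerate-simplex bookkeeping already appearing in the proof of \cref{lem:4}. Substituting back, $[-1]_{*}(\epsilon_i \otimes (0,\lambda_j)) = \epsilon_i \otimes (0,\lambda_j)$, so $[-1]_{*}$ fixes every generator; hence $[-1]_{*}[S] = [S]$ and $[\Cer_v(G)] = 0$.
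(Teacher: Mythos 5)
Your proof is correct and follows essentially the same route as the paper: both reduce, via \cref{lem:1}, to showing that $[-1]_{*}$ acts as the identity on $H_{1,1}(\Jac(G))$, and both verify this on the generators of \cref{homology-generators} using the translation $(0,-\lambda_j) = (\lambda_j,0)$ and the boundaries of the degenerate simplices $(0,\lambda_j,0)$ and $(0,0,0)$. (The only slip is cosmetic: the identification $(0,-\lambda_j)=(\lambda_j,0)$ comes from translating by $+\lambda_j$, not $-\lambda_j$.)
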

\begin{proof}
  By the construction of $\Cer_v(G)$ and \cref{lem:1}, it suffices to show that $[-1]_{*} \maps H_{1,1}(\Jac(G)) \to H_{1,1}(\Jac(G))$ acts as the identity.  We need only check this on the generators described in \cref{homology-generators}:
\begin{align*}
  [-1]_{*}(\epsilon_i \otimes (0,\lambda_j))
  &= -\epsilon_i \otimes (0,-\lambda_j)\\
  &= -\epsilon_i \otimes (\lambda_j,0) \\
  &= \epsilon_i \otimes (0,\lambda_j) - \partial \left( \epsilon_i \otimes (0,\lambda_j,0) + \epsilon_i \otimes (0,0,0) \right),
\end{align*}
where the second equality follows by equivalence of simplices under translation by $\Lambda$.
\end{proof}

\subsection{Universal Ceresa period}\label{sec:ceresa-zhark-invar}

We take exterior powers of $N_R \cong R^g$ with respect to its $R$-module structure.  We no longer have a topology, let alone a smooth structure for integration, but we can still mimic the effect of $\Theta$ on affine simplices given by \cref{eq:4}.  Therefore, we define $\Theta \maps C_{1,2}(\Jac(G)) \to N_R^{\wedge 3}$ by
\begin{equation*}
\Theta(n \otimes (u_0,u_1,u_2)) \defeq n \wedge (u_1 - u_0) \wedge (u_2 - u_0).
\end{equation*}
A direct computation shows that $\Theta \circ \partial = 0$, so $\sP \defeq \Theta(H_{1,2}(\Jac(G)))$ is well-defined.
Let $\Sigma \in C_{1,2}(\Jac(G))$ be such that $\partial\Sigma = \Cer_v(G)$.  Then the \textit{(universal) Ceresa period} of $G$ is the image of $\Theta(\Sigma)$ in $N_R^{\wedge 3}/\sP$.  Just as for $\alpha(C)$ in \cref{sec:integr-invar}, $\alpha(G)$ depends neither on the choice of $\Sigma$ nor on the basepoint $v \in V(G)$, although the latter fact will not be proved until \cref{prop:1}.  Although we will need the tools of \cref{sec:useful-tools} to compute $\alpha(G)$ efficiently, see \cref{sec:examples} for the minimal examples of nontrivial Ceresa periods.

\subsection{Evaluation}\label{sec:evaluation}

Fix a tropical curve $C$ and a model $(G,\ell)$.  We let $[\cdot,\cdot]$ continue to denote the length pairing with values in $R$ and adopt the notation $[\cdot,\cdot]_C$ for the pairing defined by \cref{eq:30}.  Likewise, we adopt the notations $\pi_C$, $\Lambda_C$, $\Theta_C$, and $\sP_C$ for the ``real edge length'' analogues of the ``universal'' $\pi$, $\Lambda$, $\Theta$, and $\sP$, respectively.  The length function $\ell$ induces a ring map $\ev \maps R \to \RR$ via $x_e \mapsto \ell(e)$.  Then we may write $\Jac(G) = N_R/\Lambda$ and $\Jac(C) = N_{\RR}/\Lambda_C$, where $N = H^1(G,\ZZ)$.  The evaluation map $\ev$ induces a map $N_R \to N_{\RR}$ that we also denote by $\ev$; this in turn induces a map $N_R^{\wedge 3} \to N_{\RR}^{\wedge 3}$.

\begin{proposition}\label{prop:evaluation}
  If $C$ is a tropical curve with underlying graph $G$, then $\ev(\sP) = \sP_C$ and $\ev(\alpha(G)) = \alpha(C)$.  In particular, if $\alpha(G) = 0$, then $\alpha(C) = 0$.
\end{proposition}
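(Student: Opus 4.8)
The plan is to realize $\ev$ as a morphism of the relevant chain complexes and to check that it intertwines every piece of data entering the two Ceresa periods. The starting observation is that $\ev \circ \pi = \pi_C$ on $C_1(G,\ZZ)$, since $\ev(x_e) = \ell(e)$; consequently $\ev(\Lambda) = \ev(\pi(H_1(G,\ZZ))) = \pi_C(H_1(G,\ZZ)) = \Lambda_C$, and in fact $\ev$ restricts to an isomorphism $\Lambda \to \Lambda_C$ (both are free of rank $g$, and $\ev$ carries the basis $\lambda_j = \pi(\gamma_j)$ to $\lambda_j^C = \pi_C(\gamma_j)$ for any basis $\gamma_j$ of $H_1(G,\ZZ)$). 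Since $\ev$ is moreover the identity on $N = 1 \otimes N$, it is a $\ZZ$-linear map $N_R \to N_{\RR}$ with $\ev(\Lambda) \subseteq \Lambda_C$ and $\ev(N) \subseteq N$; by the functoriality in \cref{eq:23} it therefore induces a chain map $\ev_* \maps C_{1,\bullet}(\Jac(G)) \to S_{1,\bullet}(\Jac(C))$, where each affine simplex produced by $\ev$ is regarded as a smooth singular simplex of the torus $\Jac(C)$.

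First I would dispatch the claim $\ev(\sP) = \sP_C$. By \cref{homology-generators} and the definition of the universal $\Theta$, the lattice $\sP = \Theta(H_{1,2}(\Jac(G)))$ is generated by the elements $2\epsilon_i \wedge \lambda_j \wedge \lambda_k$ with $j < k$, exactly as in the derivation of \cref{eq:12}. Applying $\ev$ and using $\ev(\lambda_j) = \lambda_j^C$ sends these to $2\epsilon_i \wedge \lambda_j^C \wedge \lambda_k^C$, which are precisely the generators of $\sP_C$ listed in \cref{eq:12}. Hence $\ev(\sP) = \sP_C$, and $\ev$ descends to a well-defined homomorphism $N_R^{\wedge 3}/\sP \to N_{\RR}^{\wedge 3}/\sP_C$; throughout, $\ev(\alpha(G))$ denotes the image of $\alpha(G)$ under this descended map.

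For the main assertion I would choose $\Sigma \in C_{1,2}(\Jac(G))$ with $\partial\Sigma = \Cer_v(G)$ and argue that $\ev_*\Sigma$ is an admissible filling of $\Cer_v(C)$. The key compatibility is $\ev_*(\Cer_v(G)) = \Cer_v(C)$, which I would verify by direct comparison of \cref{ceresa-cycle,eq:32} with \cref{eq:5}. The coefficient $x_e^{-1}\pi(e)$ in \cref{eq:32} is an integral vector of $N$, on which $\ev$ acts trivially and which coincides with the coefficient $\ell(e)^{-1}\pi_C(e)$ appearing in \cref{eq:5}; meanwhile $\ev$ carries the vertices $\pi(\delta_e),\pi(\delta_e+e)$ to $\pi_C(\delta_e),\pi_C(\delta_e+e)$. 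Thus $\ev_*$ sends each summand of $\Cer_v(G)$, together with its image under $[-1]_*$, to the matching summand of $\Cer_v(C)$. Since $\ev_*$ is a chain map, $\partial(\ev_*\Sigma) = \ev_*(\partial\Sigma) = \Cer_v(C)$, so $\ev_*\Sigma$ may serve as the filling in the definition of $\alpha(C)$.

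It then remains to intertwine the two integration maps: $\ev(\Theta(\Sigma)) = \Theta_C(\ev_*\Sigma)$. This is where the definitions mesh by design, since the universal $\Theta$ was built to mimic the wedge formula \cref{eq:4}, while $\Theta_C$, though defined by integration, reduces to that same formula on affine simplices. As every simplex of $\ev_*\Sigma$ is affine, evaluating both maps on a generator $n \otimes (u_0,u_1,u_2)$ yields $\ev(n) \wedge (\ev(u_1)-\ev(u_0)) \wedge (\ev(u_2)-\ev(u_0))$ on each side, so the identity holds term by term. Passing to the quotients modulo $\sP$ and $\sP_C$ and invoking the independence of $\alpha(C)$ from the chosen filling gives $\ev(\alpha(G)) = \alpha(C)$; the final statement is then immediate, since $\alpha(G) = 0$ forces $\Theta(\Sigma) \in \sP$ and hence $\ev(\Theta(\Sigma)) \in \ev(\sP) = \sP_C$. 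I expect the main obstacle to be purely organizational: confirming that $\ev_*$ genuinely lands in the smooth singular complex $S_{1,\bullet}(\Jac(C))$ on which $\Theta_C$ is defined, and tracking the $x_e^{-1}$ and $[-1]_*$ terms carefully enough to see that $\ev_*(\Cer_v(G))$ matches \cref{eq:5} on the nose rather than merely up to a degenerate boundary.
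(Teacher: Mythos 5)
Your proposal is correct and follows essentially the same route as the paper: construct the chain map $\ev_*$ from $\ev \circ \pi = \pi_C$, check $\ev_*\Cer_v(G) = \Cer_v(C)$ by comparing \cref{eq:5} with \cref{ceresa-cycle}, match the generators of $\sP$ and $\sP_C$, and use \cref{eq:4} to get $\Theta_C \circ \ev_* = \ev \circ \Theta$. The only difference is organizational (you treat $\sP$ before the Ceresa cycle), and your closing worries are non-issues since affine simplices are smooth and the comparison of \cref{eq:32} with \cref{eq:5} is exact on the nose.
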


\begin{proof}
  By definition, $\ev(N) = N$.  Moreover, it follows from the definitions given in \cref{sec:tropical-jacobians,sec:universal-jacobian} that $[\cdot,\cdot]_C = \ev \circ [\cdot,\cdot]$, hence $\pi_C = \ev \circ \pi$ and $\Lambda_C = \ev(\Lambda)$.  As a result, we obtain a chain map $\ev_{*} \maps C_{\bullet}(\Jac(G)) \to S_{\bullet}(\Jac(C))$ sending $(u_0,\ldots,u_q)$ to the affine $q$-chain $(\ev(u_0),\ldots,\ev(u_q))$, which extends to $(p,q)$-chains by preserving the $N^{\wedge p}$-component.  Comparing \cref{eq:5,ceresa-cycle}, one can check that $\Cer_v(C) = \ev_{*}\Cer_v(G)$.  Then given $\Sigma \in C_{1,2}(\Jac(G))$ for which $\partial\Sigma = \Cer_v(G)$, the affine $(1,2)$-chain $\Sigma_C \defeq \ev_{*}\Sigma$ satisfies $\partial \Sigma_C = \Cer_v(C)$.  It is clear to see that $\ev_{*}$ sends the generators of $H_{1,2}(\Jac(G))$ given by \cref{homology-generators} to the generators of $H_{1,2}(\Jac(C))$ in \cref{eq:36}.  Moreover, \cref{eq:4} ensures that $\Theta_C \circ \ev_{*} = \ev \circ \Theta$; in particular, $\sP_C = \ev(\sP)$ and $\Theta_C(\Sigma_C) = \ev(\Theta(\Sigma))$.  Then $\alpha(C) = \ev(\alpha(G))$.  The second statement follows immediately.
\end{proof}

We naturally associate to a graph $G$ a moduli space of isomorphism classes of tropical curves overlying $G$ as follows (but for more general constructions, see, e.g., \cite[Section~4.1]{chan2013tropical}).  The group $\Aut(G)$ acts on $E(G)$; this induces an action on $\RR_{>0}^{E(G)}$ by permuting the coordinates.  Then let 
\begin{equation*}
  \sM_G^{\mathrm{tr}} \defeq \RR_{>0}^{E(G)}/\Aut(G).
\end{equation*}
Each point of $\sM_G^{\mathrm{tr}}$ determines a length function $\ell$ on $G$ and hence the tropical curve with $(G,\ell)$ as a model.  In analogy to the classical setting, we say that a property is true of a \textit{very general} tropical curve overlying $G$ if it is true of every tropical curve corresponding to a point on $\sM_G^{\mathrm{tr}} \setminus H$, where $H$ is some countable union of hypersurfaces.  Then the second statement of \cref{prop:evaluation} has the following partial converse.
\begin{proposition}\label{prop:evaluation-converse}
  If $\alpha(G) \neq 0$, then a very general tropical curve overlying $G$ has nontrivial Ceresa period.
\end{proposition}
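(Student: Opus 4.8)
The plan is to deduce the statement from \cref{prop:evaluation} together with the elementary fact that a real polynomial vanishing on a nonempty open set is identically zero. First I would fix a representative $\Theta(\Sigma) \in N_R^{\wedge 3}$ of $\alpha(G)$ and expand it in the $R$-basis $\set{\epsilon_I \given I \in \binom{[g]}{3}}$ of the free $R$-module $N_R^{\wedge 3} = \bigwedge^3_R N_R$, writing $\Theta(\Sigma) = \sum_I f_I \epsilon_I$ with $f_I \in R = \ZZ[x_e \given e \in E(G)]$. Likewise, recalling from \cref{sec:ceresa-zhark-invar} that $\sP = \ZZ\genset{2\epsilon_i \wedge \lambda_j \wedge \lambda_k \given i \in [g],\, j < k}$, I would expand each generator as $\epsilon_i \wedge \lambda_j \wedge \lambda_k = \sum_I p_I^{ijk}\epsilon_I$ with $p_I^{ijk} \in R$. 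By \cref{prop:evaluation}, for a length function $\ell$ inducing $\ev \maps R \to \RR$ the curve $C = (G,\ell)$ satisfies $\alpha(C) = \ev(\alpha(G))$ and $\sP_C = \ev(\sP)$; since the $\epsilon_I$ also form a basis of $N_{\RR}^{\wedge 3}$, comparing coefficients shows that $\alpha(C) = 0$ exactly when there is an integer tuple $\mathbf{c} = (c_{ijk})$ with $f_I(\ell) = 2\sum_{i,\, j<k} c_{ijk}\, p_I^{ijk}(\ell)$ for every $I \in \binom{[g]}{3}$.

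Next I would organize the bad locus as a countable union. For each integer tuple $\mathbf{c} \in \ZZ^m$ (with $m$ the number of generators of $\sP$) put $Q_{\mathbf{c},I} \defeq f_I - 2\sum_{i,\, j<k} c_{ijk}\, p_I^{ijk} \in R$ and set $V_{\mathbf{c}} \defeq \set{\ell \in \RR_{>0}^{E(G)} \given Q_{\mathbf{c},I}(\ell) = 0 \text{ for all } I}$. By the previous paragraph, the locus of $\ell$ with $\alpha(C) = 0$ is precisely $\bigcup_{\mathbf{c} \in \ZZ^m} V_{\mathbf{c}}$, a countable union. Each $V_{\mathbf{c}}$ is the common zero set, inside the positive orthant, of finitely many polynomials, so it suffices to show that no $V_{\mathbf{c}}$ fills up all of $\RR_{>0}^{E(G)}$: then each $V_{\mathbf{c}}$ lies in the hypersurface cut out by one of its nonzero defining polynomials $Q_{\mathbf{c},I}$, and the entire bad locus is contained in a countable union of hypersurfaces.

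The crux—and the only step where the hypothesis $\alpha(G) \neq 0$ is used—is the verification that $V_{\mathbf{c}} \neq \RR_{>0}^{E(G)}$ for every $\mathbf{c}$, and I expect this to be the main (though short) obstacle. Here I would argue by contradiction: if some $V_{\mathbf{c}}$ equaled all of $\RR_{>0}^{E(G)}$, then each $Q_{\mathbf{c},I}$ would vanish on a nonempty Euclidean-open subset of $\RR^{E(G)}$ and hence be the zero polynomial in $R$. This would give $\Theta(\Sigma) = \sum_I f_I \epsilon_I = 2\sum_{i,\, j<k} c_{ijk}\,\epsilon_i \wedge \lambda_j \wedge \lambda_k$ in $N_R^{\wedge 3}$, so $\Theta(\Sigma) \in \sP$ and therefore $\alpha(G) = 0$, contradicting the hypothesis. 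Everything else is bookkeeping.

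Finally, I would descend to the moduli space. Since $\alpha(C)$ depends only on the isomorphism class of $C$, the bad locus is $\Aut(G)$-invariant and descends to a subset of $\sM_G^{\mathrm{tr}} = \RR_{>0}^{E(G)}/\Aut(G)$ contained in a countable union of hypersurfaces; every tropical curve represented by a point of the complement then has $\alpha(C) \neq 0$, i.e.\ nontrivial Ceresa period, which is the assertion. The two points to be careful about are that $\sP$ is only a $\ZZ$-module, so the union must range over all integer tuples $\mathbf{c}$ (yielding countably many hypersurfaces, as required), and that $\Theta(\Sigma)$ genuinely has polynomial—not merely Laurent—coefficients, which is automatic because $N_R^{\wedge 3}$ is the exterior power taken over $R$.
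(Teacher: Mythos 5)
Your proposal is correct and follows essentially the same route as the paper: expand $\Theta(\Sigma)$ minus an integer combination of the generators of $\sP$ in an $R$-basis of $N_R^{\wedge 3}$, observe that $\alpha(G)\neq 0$ forces some polynomial coefficient to be nonzero for every integer tuple, and take the countable union over tuples of the resulting positive-codimension zero loci. The only (harmless) additions are your explicit descent to $\sM_G^{\mathrm{tr}}$ via $\Aut(G)$-invariance and the open-set vanishing phrasing, which is just the contrapositive of the paper's "cut out a subvariety of positive codimension."
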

\begin{proof}
  Let $C$ be a tropical curve with $(G,\ell)$ as a model, and adopt the notation used in the proof of \cref{prop:evaluation}.  If we choose generators $p_1,\ldots,p_r$ of $\sP$, then $\ev(p_1),\ldots,\ev(p_r)$ generate $\sP_C$.  Suppose that $\alpha(C) = 0$.  Then by definition, $\Theta_C(\Sigma_C) \in \sP_C$, so we may write 
\begin{equation*}
  \ev(\Theta(\Sigma)) = \Theta_C(\Sigma_C) = \sum_{i=1}^r a_i \ev(p_i) = \ev\left(\sum_{i=1}^r a_ip_i\right)
\end{equation*}
for some $a_i \in \ZZ$, hence 
\begin{equation}\label{eq:33}
S \defeq \Theta(\Sigma) - \sum_{i=1}^r a_ip_i \in \ker\ev.
\end{equation}
Observe that $S \in N_R^{\wedge 3}$.  Since $N_R^{\wedge 3} \cong R \otimes_{\ZZ} N^{\wedge 3}$ is a free $R$-module of finite rank, we may fix a basis and write $S$ as an $R$-linear combination of the basis elements.  Then $S \in \ker\ev$ precisely when all of its finitely many $R$-coefficients vanish under $\ev$.   The fact that $\alpha(G) \neq 0$ implies that $S \neq 0$, so these coefficients cut out a subvariety of $\RR_{> 0}^{E(G)}$ of positive codimension.  The desired statement follows by considering all possible choices of the $a_i$, of which there are countably many.
\end{proof}

\begin{corollary}\label{cor:3}
  If $\alpha(G) \neq 0$, then a very general tropical curve overlying $G$ has nontrivial Ceresa cycle.
\end{corollary}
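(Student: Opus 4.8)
The plan is to chain together the two results immediately preceding this corollary. First I would invoke \cref{prop:evaluation-converse}: since $\alpha(G) \neq 0$ by hypothesis, that proposition furnishes a countable union of hypersurfaces $H \subset \sM_G^{\mathrm{tr}}$ such that every tropical curve $C$ corresponding to a point of $\sM_G^{\mathrm{tr}} \setminus H$ has nontrivial Ceresa period, i.e.\ $\alpha(C) \neq 0$. This is precisely the sense in which the conclusion should hold for a \emph{very general} curve, so the exceptional locus $H$ appearing in the corollary can be taken to be the very one produced here; no new exceptional set needs to be constructed.

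Next I would apply the contrapositive of \cref{prop:zharkov}. That proposition asserts that algebraic triviality of the tropical Ceresa cycle of $C$ forces $\alpha(C) = 0$; equivalently, $\alpha(C) \neq 0$ implies that the Ceresa cycle of $C$ is algebraically nontrivial. Combining this with the previous paragraph, every $C$ overlying $G$ whose length data lie outside $H$ satisfies $\alpha(C) \neq 0$ and therefore has algebraically nontrivial Ceresa cycle, which is exactly the desired statement.

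Since both ingredients are already established earlier in the paper, I do not expect any genuine obstacle here; the argument is a two-step composition. The only point requiring a moment's care is bookkeeping about the word ``nontrivial.'' Recall that the Ceresa cycle is always trivial in homology, so ``nontrivial Ceresa cycle'' must be understood as ``algebraically nontrivial Ceresa cycle,'' and it is this notion of triviality that \cref{prop:zharkov} governs via the period $\alpha(C)$. With that reading fixed, the corollary follows immediately.
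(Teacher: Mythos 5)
Your proposal is correct and is exactly the paper's argument: the paper proves this corollary by citing \cref{prop:zharkov} and \cref{prop:evaluation-converse}, which you have simply unpacked (applying the contrapositive of the former to the conclusion of the latter). Nothing further is needed.
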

\begin{proof}
  The result follows immediately from \cref{prop:zharkov,prop:evaluation-converse}.
\end{proof}

We could have defined $\alpha(C)$ directly as $\ev(\alpha(G))$.  In that case, one must show that $\alpha(C)$ is in fact a tropical invariant, i.e., if $C$ has two different models $(G,\ell)$ and $(G',\ell')$, then $\alpha(C)$ is the same whether computed from $\alpha(G)$ or $\alpha(G')$.  We opted for the current approach because it makes the connection to the tropical Ceresa cycle (via \cref{prop:zharkov}) more transparent.

\section{Computational tools}\label{sec:useful-tools}

\subsection{Explicit $(1,2)$-chain $\Upsilon$ whose boundary is $\Cer_v(G)$}\label{sec:explicit-representative}

Given a graph $G$ with basepoint $v \in V(G)$, we shall define a $(1,2)$-chain $\Upsilon$ for which $\partial \Upsilon = \Cer_v(G)$.  To that end, we fix orientations on the edges of $G$ and a spanning tree $T$ and label the edges of $G \setminus T$ by $e_1,\ldots,e_n$.  Each $e_i$ determines a simple cycle $\gamma_i$ that uses only edges from $E(T) \cup \set{e_i}$ and that has positive $e_i$-coefficient.  Order the edges of $\gamma_i$ cyclically.  Then $H_1(G,\ZZ)$ has $\gamma_1,\ldots,\gamma_g$ as a basis; let $\epsilon_1,\ldots,\epsilon_g$ be the dual basis for $N$, i.e.,
\begin{equation*}
  \epsilon_i(\gamma_j) = 
\begin{cases}
  1 & \text{ if } i = j \\
  0 & \text{ otherwise}
\end{cases}.
\end{equation*}
For $e \in E(G)$, the integer $f_i(e) \defeq x_e^{-1}[e,\gamma_i]$ counts the multiplicity of $e$ in $\gamma_i$; by our choice of homology basis, $f_i(e) \in \set{0,\pm 1}$.  Observe also that 
\begin{equation*}
  \pi(e) = \sum_{i=1}^g [e,\gamma_i]\epsilon_i = x_e\sum_{i=1}^gf_i(e)\epsilon_i.
\end{equation*}
From \cref{ceresa-cycle}, we write
\begin{align}
  \Cer_v(G)
  &= \sum_{e\in E(G)} (\Phi_v(e) - [-1]_{*} \Phi_v(e))\nonumber\\
  &= \sum_{e\in E(G)} \left( x_e^{-1}\pi(e) \otimes (\pi(\delta_e),\pi(\delta_e+e)) + x_e^{-1}\pi(e) \otimes (-\pi(\delta_e),-\pi(\delta_e+e)) \right)\nonumber\\
  &= \sum_{e \in E(G)} \sum_{i=1}^gf_i(e)\epsilon_i \otimes \big( (\pi(\delta_e),\pi(\delta_e+e)) + (-\pi(\delta_e),-\pi(\delta_e+e)) \big)\nonumber\\
  &= \sum_{i=1}^g \epsilon_i \otimes \sum_{e \in E(G)} f_i(e) \big((\pi(\delta_e),\pi(\delta_e+e)) + (-\pi(\delta_e),-\pi(\delta_e+e)) \big).\label{eq:1}
\end{align}

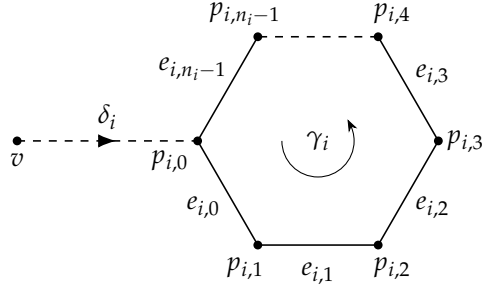
\begin{figure}[htbp]
\centering
\begin{tikzpicture}[semithick, scale=1.6, >=Stealth, decoration={markings,mark=at position 0.5 with {\arrow[xshift={2pt + 2.25\pgflinewidth}]{Latex[scale=1.2]},}}]
  \filldraw (-1.5,0) circle [radius=0.03];
  \draw[postaction={decorate}, dashed] (-1.5,0) node[below=1] {$v$} -- node[above=1] {$\delta_i$} (0,0) node[below left] {$p_{i,0}$};
  \draw (1,0) node {$\gamma_i$};
  \foreach \x in {0,...,5}{
    \filldraw (1,0) +(180+60*\x:1) circle [radius=0.03];
  }
  \foreach \x/\y in {0/1,1/2,2/3,3/4}{
    \draw (1,0) +(180+60*\x:1) -- +(240+60*\x:1);
    \draw (1,0) +(210+60*\x:1.1) node {$e_{i,\x}$};
    \draw (1,0) +(240+60*\x:1.22) node {$p_{i,\y}$};
  }
  \draw[dashed] (1,0) +(60:1) -- +(120:1);
  \draw (1,0) +(120:1) -- +(180:1);
  \draw (1,0) +(150:1.2) node {$e_{i,n_i-1}$};
  \draw (1,0) +(120:1.22) node {$p_{i,n_i-1}$};
  \draw[thin] (1,0) +(180:0.3) arc [start angle=-180, end angle=30, radius=0.3];
  \draw[thin,->] (1,0) ++(30:0.3) -- +(-0.015,0.03);
\end{tikzpicture}

\caption{Notation for the cycle $\gamma_i$\label{fig:cycle}}
\end{figure}

Fix the unique path $\delta_i$ from $v$ to a vertex $p_{i,0}$ in $\gamma_i$ so that $\delta_i$ is contained in $T$ and does not share any edges with $\gamma_i$.  Following the cyclic orientation of $\gamma_i$, label the subsequent vertices by $p_{i,1},p_{i,2},\ldots$ with $p_{i,n_i} = p_{i,0}$.  Write $e_{i,j}$ for the edge in $\gamma_i$ with vertices $p_{i,j}$ and $p_{i,j+1}$ (see \cref{fig:cycle}).  For $0 \leq j \leq n_i$, we define
\begin{equation*}
  u_{i,j} \defeq \pi\left(\delta_i + \sum_{k=0}^{j-1} f_i(e_{i,k})e_{i,k} \right).
\end{equation*}
Notice that $\delta_i + \sum_{k=0}^{j-1} f_i(e_{i,k})e_{i,k}$ is a path from $v$ to $p_{i,j}$ that passes through $p_{i,0},p_{i,1},\ldots,p_{i,j-1}$; in particular, $u_{i,j}$ is a lift of $\Phi_v(p_{i,j})$ to $N_R$, and \cref{eq:1} reduces to
\begin{equation}\label{eq:3}
  \Cer_v(G)
  = \sum_{i=1}^g \epsilon_i \otimes \sum_{j=0}^{n_i-1}  
\begin{cases}
  (u_{i,j},u_{i,j+1}) + (-u_{i,j},-u_{i,j+1}) & \text{if } f_i(e_{i,j}) = 1 \\
  -(u_{i,j+1},u_{i,j}) - (-u_{i,j+1},-u_{i,j}) & \text{if } f_i(e_{i,j}) = -1
\end{cases},
\end{equation}
the $\epsilon_i$-component of which is depicted in \cref{fig:lift}.  This cumbersome notation is necessary because, unlike for simplicial homology, we are not able to permute the vertices of a simplex at the cost of introducing a sign.  The alternative that works in the current context is the following:
\begin{equation}\label{eq:6}  
\begin{split}
  -(u_{i,j+1},u_{i,j}) - (-u_{i,j+1},-u_{i,j}) &= (u_{i,j},u_{i,j+1}) + (-u_{i,j},-u_{i,j+1})- \partial(u_{i,j},u_{i,j+1},u_{i,j}) - \partial(u_{i,j},u_{i,j},u_{i,j})\\
                                               &\quad- \partial(-u_{i,j},-u_{i,j+1},-u_{i,j}) - \partial(-u_{i,j},-u_{i,j},-u_{i,j}).
\end{split}
\end{equation}
In other words, we must add the boundary of a degenerate $(1,2)$-chain in order to permute the vertices of each $(1,1)$-simplex.

Now that we have expressed $\Cer_v(G)$ more explicitly, we are able to define $\Upsilon \defeq \sum_{i=1}^g \epsilon_i \otimes \Upsilon_i$, where
\begin{equation}\label{eq:26}  
\begin{split}
  \Upsilon_i &\defeq \sum_{j=1}^{n_i-1} \left((u_{i,0},u_{i,j},u_{i,j+1}) + (-u_{i,0},-u_{i,j},-u_{i,j+1})\right) + (u_{i,0},u_{i,n_i},-u_{i,0}) + (u_{i,0},-u_{i,0},-u_{i,n_i})\\
             &\quad- \sum_{j \given f_i(e_{i,j}) = -1} \left((u_{i,j},u_{i,j+1},u_{i,j}) + (u_{i,j},u_{i,j},u_{i,j})
               + (-u_{i,j},-u_{i,j+1},-u_{i,j}) + (-u_{i,j},-u_{i,j},-u_{i,j})\right).
\end{split}
\end{equation}
Using the fact that $u_{i,n_i} = u_{i,0} + \lambda_i$, a straightforward computation yields
\begin{equation*}  
\begin{split}
\partial \Upsilon_i &= \sum_{j=0}^{n_i-1} \left((u_{i,j},u_{i,j+1}) + (-u_{i,j},-u_{i,j+1})\right)\\
  &\quad- \sum_{j \given f_i(e_{i,j}) = -1} \partial\left((u_{i,j},u_{i,j+1},u_{i,j}) + (u_{i,j},u_{i,j},u_{i,j})
               + (-u_{i,j},-u_{i,j+1},-u_{i,j}) + (-u_{i,j},-u_{i,j},-u_{i,j})\right),
\end{split}
\end{equation*}
which by \cref{eq:6} equals the $\epsilon_i$-component of \cref{eq:3}.  This is depicted without orientations or degenerate simplices in \cref{fig:lift+upsilon}.  We conclude that $\partial\Upsilon = \Cer_v(G)$.

\begin{remark}\label{rem:1}
  Notice that each $u_{i,j}$ is independent of the orientations on the edges of $G$.  Consequently, $\Upsilon$ is also independent of the orientations.
\end{remark}

\begin{figure}[tbp]
\centering
\subfigure[Support of the $\epsilon_i$-component of $\Cer_v(G)$ in $N_R$\label{fig:lift}]{\begin{tikzpicture}[semithick, scale=2, >=Stealth, decoration={markings,mark=at position 0.5 with {\arrow[xshift={2pt + 2.25\pgflinewidth}]{Latex[scale=1.2]},}}]
  \foreach \x in {0,...,6}{
    \filldraw (0,0) +(180+30*\x:1) circle [radius=0.024];
  }
  \foreach \x in {0,...,4}{
    \draw (0,0) +(180+30*\x:1.2) node {$u_{i,\x}$};
  }
  \foreach \x in {0,...,3}{
    \draw (0,0) +(180+30*\x:1) -- (210+30*\x:1);
  }
  \draw[dashed] (0,0) +(-60:1) -- +(-30:1);
  \draw (0,0) +(-30:1) -- +(0:1);
  \draw (0,0) +(-30:1.33) node {$u_{i,n_i-1}$};
  \draw (0,0) +(0:1.25) node {$u_{i,n_i}$};
  \foreach \x in {0,...,6}{
    \filldraw (0,0.5) +(30*\x:1) circle [radius=0.024];
  }
  \foreach \x in {1,...,4}{
    \draw (0,0.5) +(30*\x:1.18) node {$-u_{i,\x}$};
  }
  \foreach \x in {0,...,3}{
    \draw (0,0.5) +(30*\x:1) -- +(30+30*\x:1);
  }
  \draw[dashed] (0,0.5) +(120:1) -- +(150:1);
  \draw (0,0.5) +(150:1) -- +(180:1);
  \draw (0,0.5) +(150:1.34) node {$-u_{i,n_i-1}$};
  \draw (0,0.5) +(180:1.29) node {$-u_{i,n_i}$};
  \draw (0,0.5) +(0:1.27) node {$-u_{i,0}$};
\end{tikzpicture}
\subfigure[Support of $\Upsilon_i$ \label{fig:upsilon}]{\begin{tikzpicture}[semithick, scale=2, >=Stealth, decoration={markings,mark=at position 0.5 with {\arrow[xshift={2pt + 2.25\pgflinewidth}]{Latex[scale=1.2]},}}]
  \foreach \x in {0,...,6}{
    \filldraw (0,0) +(180+30*\x:1) circle [radius=0.024];
    \draw (0,0) +(180:1) -- +(180+30*\x:1);
  }
  \foreach \x in {0,...,4}{
    \draw (0,0) +(180+30*\x:1.2) node {$u_{i,\x}$};
  }
  \foreach \x in {0,...,3}{
    \draw (0,0) +(180+30*\x:1) -- (210+30*\x:1);
  }
  \draw[dashed] (0,0) +(-60:1) -- +(-30:1);
  \draw (0,0) +(-30:1) -- +(0:1);
  \draw (0,0) +(-30:1.33) node {$u_{i,n_i-1}$};
  \draw (0,0) +(0:1.25) node {$u_{i,n_i}$};
  \foreach \x in {0,...,6}{
    \filldraw (0,0.5) +(30*\x:1) circle [radius=0.024];
    \draw (0,0.5) +(0:1) -- +(30*\x:1);
  }
  \foreach \x in {1,...,4}{
    \draw (0,0.5) +(30*\x:1.18) node {$-u_{i,\x}$};
  }
  \foreach \x in {0,...,3}{
    \draw (0,0.5) +(30*\x:1) -- +(30+30*\x:1);
  }
  \draw[dashed] (0,0.5) +(120:1) -- +(150:1);
  \draw (0,0.5) +(150:1) -- +(180:1);
  \draw (0,0.5) +(150:1.34) node {$-u_{i,n_i-1}$};
  \draw (0,0.5) +(180:1.29) node {$-u_{i,n_i}$};
  \draw (0,0.5) +(0:1.27) node {$-u_{i,0}$};
  \draw (0,0) ++(0:1) -- +(0,0.5);
  \draw (0,0) ++(180:1) -- +(0,0.5);
  \draw (0,0) +(180:1) -- +(1,0.5);
\end{tikzpicture}

\caption{\label{fig:lift+upsilon}}
\end{figure}

\subsection{Explicit representative $\Theta(\Upsilon)$ for $\alpha(G)$}\label{sec:explicit-computation}

Since $\Theta$ kills degenerate $(1,2)$-simplices, we may ignore the second summation in \cref{eq:26} in the following computation:
\begin{align}
\begin{split}
  \Theta(\Upsilon)
  &= \sum_{i=1}^g \Theta(\epsilon_i \otimes \Upsilon_i)\\
  &= \sum_{i=1}^g\epsilon_i \wedge \left( \sum_{j=1}^{n_i-1} \left( (u_{i,j}-u_{i,0}) \wedge (u_{i,j+1}-u_{i,0}) + (-u_{i,j}+u_{i,0}) \wedge (-u_{i,j+1}+u_{i,0})\right)\right. \\
  &\left.\hphantom{\sum_{i=1}^g\epsilon_i \wedge \Bigg(}\quad+(u_{i,n_i}-u_{i,0}) \wedge (-2u_{i,0}) + (-2u_{i,0}) \wedge (-u_{i,n_i}-u_{i,0})
    \vphantom{\sum_{j=1}^{n_i-1} \left( (u_{i,j}-u_{i,0})\right)}\right)
\end{split}\nonumber\\
  &= \sum_{i=1}^g\epsilon_i \wedge \left( 2\sum_{j=1}^{n_i-1} (u_{i,j}-u_{i,0}) \wedge (u_{i,j+1}-u_{i,j}) - 4 (u_{i,n_i}-u_{i,0}) \wedge u_{i,0} \right)\nonumber\\
  &= \sum_{i=1}^g\epsilon_i \wedge \left( 2\sum_{j=1}^{n_i-1} \left(\sum_{k=0}^{j-1}\pi(f_i(e_{i,k})e_{i,k})\right) \wedge \pi(f_i(e_{i,j})e_{i,j}) - 4 \pi(\gamma_i) \wedge \pi(\delta_i) \right)\nonumber\\
  &= \sum_{i=1}^g\epsilon_i \wedge \left( 2\sum_{0 \leq k < j \leq n_i-1} \pi(f_i(e_{i,k})e_{i,k}) \wedge \pi(f_i(e_{i,j})e_{i,j}) - 4 \pi(\gamma_i) \wedge \pi(\delta_i) \right).\label{eq:13}
\end{align}

Since we have bases $\epsilon_1,\ldots,\epsilon_g$ for $N$ and $\pi(\gamma_1),\ldots,\pi(\gamma_g)$ for $\Lambda$, \cref{homology-generators} tells us that
\begin{equation*}
  H_{1,2}(\Jac(G)) = \ZZ\genset*{\epsilon_i \otimes ((0,\pi(\gamma_j),\pi(\gamma_j+\gamma_k)) - (0,\pi(\gamma_k),\pi(\gamma_j+\gamma_k))) \given i,j,k \in [g],\, j < k}.
\end{equation*}
Applying $\Theta$, we find that
\begin{equation}\label{eq:24}
  \sP = \ZZ\genset*{2\epsilon_i \wedge \pi(\gamma_j) \wedge \pi(\gamma_k) \given i,j,k \in [g],\, j < k}.
\end{equation}
Since $\pi(\gamma_j) = \sum_{m=1}^g [\gamma_j,\gamma_m] \epsilon_m$, we may rewrite
\begin{align}
  2\epsilon_i \wedge \pi(\gamma_j) \wedge \pi(\gamma_k)
  &= 2\sum_{m=1}^g \sum_{n=1}^g [\gamma_j,\gamma_m][\gamma_k,\gamma_n] \epsilon_i \wedge \epsilon_m \wedge \epsilon_n \nonumber\\
  &= 2\sum_{m<n} \left([\gamma_j,\gamma_m][\gamma_k,\gamma_n]-[\gamma_j,\gamma_n][\gamma_k,\gamma_m]\right) \epsilon_i \wedge \epsilon_m \wedge \epsilon_n.\label{eq:25}
\end{align}
In other words, the $\epsilon_i \wedge \epsilon_m \wedge \epsilon_n$-coefficient of $2\epsilon_i \wedge \pi(\gamma_j) \wedge \pi(\gamma_k)$ is twice the $(j,k)\times(m,n)$-minor of the Gram matrix of the pairing $[\cdot,\cdot]$.

\subsection{Basepoint independence of the Ceresa period}\label{sec:basep-indep}

\begin{proposition}\label{lem:basep-indep}
  Fix a graph $G$ and let $\Upsilon$ be the $(1,2)$-chain defined by \cref{eq:26}.  Then $\Theta(\Upsilon)$ does not depend on the choice of basepoint.
\end{proposition}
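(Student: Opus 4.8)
The plan is to isolate exactly how the basepoint enters the formula \cref{eq:13} for $\Theta(\Upsilon)$ and then to show that moving the basepoint changes nothing at all. The spanning tree $T$, the fundamental cycles $\gamma_1,\ldots,\gamma_g$, the chords $e_i$, and the edge vectors $w_{i,j} \defeq \pi(f_i(e_{i,j})e_{i,j})$ traversed along each cycle are all manifestly independent of $v$; the basepoint affects \cref{eq:13} only through (i) the attaching vertex $p_{i,0}$ of each cycle, which fixes the cyclic order of the $w_{i,j}$ in the first summand $2\sum_{k<j} w_{i,k}\wedge w_{i,j}$, and (ii) the connecting paths $\delta_i$ in the term $-4\pi(\gamma_i)\wedge\pi(\delta_i)$. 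Since $T$ is a connected spanning tree, it suffices to prove invariance when $v$ is replaced by an adjacent vertex $v'$; let $\eta$ denote the tree edge joining them, oriented from $v$ to $v'$. The whole argument then rests on the elementary identity
\begin{equation*}
  \sum_{i=1}^g \epsilon_i \wedge \pi(\gamma_i) = \sum_{i,m} [\gamma_i,\gamma_m]\,\epsilon_i \wedge \epsilon_m = 0,
\end{equation*}
which holds because the Gram matrix $[\gamma_i,\gamma_m]$ of the length pairing is symmetric while $\epsilon_i\wedge\epsilon_m$ is antisymmetric; consequently $\sum_i \epsilon_i\wedge\pi(\gamma_i)\wedge x = 0$ for every $x \in N_R$.

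Next I would run a case analysis on each index $i$, according to whether $\eta$ lies on $\gamma_i$, i.e.\ whether $f_i(\eta)\neq 0$. If $f_i(\eta)=0$, then $\eta$ lies off the tree path $P_i \defeq \gamma_i\setminus\set{e_i}$; removing $\eta$ from $T$ keeps $P_i$ inside a single component, so the attaching vertex $p_{i,0}$ is the same for $v$ and $v'$ and the first summand of the $\epsilon_i$-component is untouched. Only $\delta_i$ changes, and since both connecting paths terminate at the common $p_{i,0}$, their difference is the tree path from $v$ to $v'$, namely $\eta$; a direct computation then shows the $\epsilon_i$-component changes by exactly $+4\,\epsilon_i\wedge\pi(\gamma_i)\wedge\pi(\eta)$. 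If instead $f_i(\eta)\neq 0$, then $\eta$ is an edge of $P_i$, so both $v$ and $v'$ lie on $\gamma_i$; here $\delta_i$ is the empty path for both basepoints and the second summand contributes nothing, but the attaching vertex shifts by one step along $\gamma_i$, from $v$ to $v'$. I would record in a short preliminary computation the effect of such a cyclic shift on $\sum_{k<j} w_{i,k}\wedge w_{i,j}$: using $\sum_j w_{i,j} = \pi(\gamma_i)$, a one-step shift alters this sum by $2\,\pi(\gamma_i)\wedge\pi(\eta)$, so that the $\epsilon_i$-component again changes by exactly $+4\,\epsilon_i\wedge\pi(\gamma_i)\wedge\pi(\eta)$.

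The punchline is that both cases produce the identical per-cycle change $4\,\epsilon_i\wedge\pi(\gamma_i)\wedge\pi(\eta)$, so summing over $i$ gives $4\big(\sum_i\epsilon_i\wedge\pi(\gamma_i)\big)\wedge\pi(\eta)=0$ by the identity above, whence $\Theta(\Upsilon)$ is unchanged. The main obstacle is the bookkeeping in the second case: I must verify that a single tree-edge move shifts $p_{i,0}$ by precisely one vertex along $\gamma_i$ (in either cyclic direction) and that the shift formula yields the same oriented contribution $\pi(\eta)$ regardless of direction, so that the signs arising from reordering the first summand match those arising from reversing $\delta_i$ in the first case. Confirming that these two a priori independent sign conventions align, and that the two cases are genuinely exhaustive and mutually exclusive for a tree edge $\eta$, is where the real effort lies; once this is settled, the Gram-symmetry cancellation closes the argument at once.
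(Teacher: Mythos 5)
Your proposal is correct, but it reaches the conclusion by a genuinely different route than the paper. Both arguments reduce to adjacent basepoints and both close with the same cancellation $\sum_i \epsilon_i\wedge\pi(\gamma_i)\wedge\pi(\eta)=0$ coming from the symmetry of the length pairing. The difference lies in how the change in $\Theta(\Upsilon)$ is computed. The paper exploits the observation that the derivation of \cref{eq:13} uses nothing about $\delta_i$ beyond its being a path from the basepoint to \emph{some} vertex of $\gamma_i$: it therefore keeps every attaching vertex $p_{i,0}$ fixed and simply prepends the connecting edge $a$ to each $\delta_i$, so that the first summand of \cref{eq:13} is literally unchanged and every cycle contributes the uniform correction $-4\,\epsilon_i\wedge\pi(\gamma_i)\wedge\pi(a)$ with no case analysis. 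You instead compare the two canonical chains, which forces the dichotomy between cycles whose connecting path changes and cycles whose attaching vertex shifts, plus a cyclic-shift lemma. For what it is worth, the sign bookkeeping you flag does work out: orienting $\eta$ from $v$ to $v'$ and writing $S\defeq\sum_{k<j}w_{i,k}\wedge w_{i,j}$, in your second case a one-step forward shift (the sub-case $f_i(\eta)=+1$) gives $S'-S=2\,\pi(\gamma_i)\wedge w_{i,0}$ with $w_{i,0}=\pi(\eta)$, while a one-step backward shift (the sub-case $f_i(\eta)=-1$) gives $S'-S=2\,w_{i,n_i-1}\wedge\bigl(\pi(\gamma_i)-w_{i,n_i-1}\bigr)=-2\,\pi(\gamma_i)\wedge w_{i,n_i-1}$ with $w_{i,n_i-1}=-\pi(\eta)$; both equal $2\,\pi(\gamma_i)\wedge\pi(\eta)$, which after the overall factor of $2$ matches the change $+4\,\epsilon_i\wedge\pi(\gamma_i)\wedge\pi(\eta)$ from your first case (where $\delta_i'=\delta_i-\eta$ and $p_{i,0}$ is unchanged because $\delta_i'$ still avoids the edges of $\gamma_i$). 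The trade-off is that your argument proves the statement for the literally canonical $\Upsilon$ at both basepoints at the cost of more bookkeeping, whereas the paper's relaxation of the choice of $\delta_i$ buys a one-line computation but compares the canonical chain at $v$ to a non-canonical (though equally valid, since $\partial\Upsilon'=\Cer_{v'}(G)$) chain at $v'$.
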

\begin{proof}
  Since $G$ is connected, it suffices to show that $\Theta(\Upsilon)$ remains unchanged whether computed using $v$ as the basepoint or some vertex $v'$ separated from $v$ by an edge $a$.  Without loss of generality, assume that $a$ is oriented from $v'$ to $v$.

  We observe that, in the computations in \cref{sec:explicit-representative}, as well as in deriving \cref{eq:13}, we do not use any properties of $\delta_i$ other than that it is a path from $v$ to \textit{some} vertex $p_{i,0}$ in the cycle $\gamma_i$.  In particular, if we construct $\Upsilon'$ as in \cref{sec:explicit-representative} using the basepoint $v'$ and the paths $\delta_i' \defeq \delta_i + a$, then the same argument shows that $\partial\Upsilon' = \Cer_{v'}(G)$.  Moreover, the points $p_{i,0}$ have not changed, nor have the edge labels $e_{i,j}$, so it follows from \cref{eq:13} that
\begin{align*}
  \Theta(\Upsilon')
  &= \sum_{i=1}^g\epsilon_i \wedge \left( 2\sum_{0 \leq k < j \leq n_i-1} \pi(f_i(e_{i,k})e_{i,k}) \wedge \pi(f_i(e_{i,j})e_{i,j}) - 4 \pi(\gamma_i) \wedge \pi(\delta_i + a) \right)\\
  &= \Theta(\Upsilon) - 4\sum_{i=1}^g \epsilon_i \wedge \pi(\gamma_i) \wedge \pi(a)\\
  &= \Theta(\Upsilon) - 4\sum_{i=1}^g \epsilon_i \wedge \sum_{j=1}^g [\gamma_i,\gamma_j]\epsilon_j \wedge \pi(a)\\
  &= \Theta(\Upsilon) - 4\sum_{i,j} [\gamma_i,\gamma_j] \epsilon_i \wedge \epsilon_j \wedge \pi(a).
    \intertext{Ordering so that $i < j$ and using the fact that $\epsilon_j \wedge \epsilon_i = -\epsilon_i \wedge \epsilon_j$, we obtain}
  &= \Theta(\Upsilon) - 4\sum_{i < j} \left([\gamma_i,\gamma_j] - [\gamma_j,\gamma_i]\right) \epsilon_i \wedge \epsilon_j \wedge \pi(a).
\end{align*}
Finally, $[\cdot,\cdot]$ is symmetric, so the summation vanishes and we are left with $\Theta(\Upsilon') = \Theta(\Upsilon)$, as desired.
\end{proof}
\begin{corollary}\label{prop:1}
  The Ceresa period $\alpha(G)$ does not depend on the choice of basepoint.
\end{corollary}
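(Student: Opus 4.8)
The plan is to deduce the corollary as a formal consequence of \cref{lem:basep-indep}, which already carries essentially all of the content. The only point requiring genuine care is the passage from the \emph{specific} chain $\Upsilon$ of \cref{eq:26} to the \emph{invariant} $\alpha(G)$ itself, which is defined in \cref{sec:ceresa-zhark-invar} as the class modulo $\sP$ of $\Theta(\Sigma)$ for an \emph{arbitrary} $\Sigma \in C_{1,2}(\Jac(G))$ with $\partial\Sigma = \Cer_v(G)$.

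First I would recall the well-definedness argument from \cref{sec:ceresa-zhark-invar}: any two chains with boundary $\Cer_v(G)$ differ by a class in $H_{1,2}(\Jac(G))$, whose image under $\Theta$ lies in $\sP$ by the definition of $\sP$. Next I would invoke the construction of \cref{sec:explicit-representative}, where $\Upsilon$ was shown to satisfy $\partial\Upsilon = \Cer_v(G)$; thus $\Upsilon$ is one admissible choice of $\Sigma$. Combining these two facts yields, for each fixed basepoint $v$, the identity
\[
  \alpha(G) = \bigl[\Theta(\Upsilon)\bigr] \in N_R^{\wedge 3}/\sP,
\]
where $\Upsilon$ is built from $v$ together with the auxiliary spanning tree, cycle basis, and paths, all of which are immaterial to the left-hand side by the previous sentence.

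The last step would be to feed in \cref{lem:basep-indep}, which asserts the \emph{stronger} statement that $\Theta(\Upsilon)$ is unchanged as an element of $N_R^{\wedge 3}$ when the basepoint is varied. Since equality in $N_R^{\wedge 3}$ a fortiori implies equality of classes in the quotient $N_R^{\wedge 3}/\sP$, the displayed identity shows that the value of $\alpha(G)$ computed from any basepoint agrees, as desired. I do not anticipate a real obstacle here, since the work has been front-loaded into \cref{lem:basep-indep}, whose proof treats an adjacent pair $v, v'$ and then bootstraps to arbitrary basepoints using the connectivity of $G$. The only subtlety worth flagging is bookkeeping: one must confirm that the chain $\Upsilon'$ produced at the basepoint $v'$ is genuinely of the form \cref{eq:26} and satisfies $\partial\Upsilon' = \Cer_{v'}(G)$, so that it too represents $\alpha(G)$ at $v'$; this is precisely the compatibility secured by the choice $\delta_i' = \delta_i + a$ in the proof of \cref{lem:basep-indep}.
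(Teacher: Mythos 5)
Your proposal is correct and follows the paper's own argument exactly: the paper also deduces the corollary by noting that $\Theta(\Upsilon)$ is a representative of $\alpha(G)$ for each basepoint and then invoking \cref{lem:basep-indep}. Your write-up simply makes explicit the well-definedness and boundary-verification steps that the paper treats as immediate.
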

\begin{proof}
This follows immediately from \cref{lem:basep-indep} and the fact that $\Theta(\Upsilon)$ is a representative of $\alpha(G)$.
\end{proof}

\begin{corollary}\label{cor:4}
  Let $C$ be a tropical curve overlying $G$.  Then $\alpha(C)$ does not depend on the choice of basepoint.
\end{corollary}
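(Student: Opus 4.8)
The plan is to deduce the statement for the tropical curve $C$ from the already-established basepoint independence of the universal period $\alpha(G)$ in \cref{prop:1}, using \cref{prop:evaluation} as the bridge. Recall that $\alpha(C)$ is defined intrinsically on the tropical Jacobian $\Jac(C)$ and depends on a point $v \in C$ only through the Ceresa cycle $\Cer_v(C)$; in particular, the relation $\alpha(C) = \ev(\alpha(G))$ supplied by \cref{prop:evaluation} holds for \emph{any} model $(G,\ell)$ of $C$ in which the chosen basepoint $v$ happens to be a vertex.

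First I would fix two arbitrary basepoints $v, v' \in C$ and produce a single model in which both are vertices. Starting from any model of $C$, one subdivides the (at most two) edges that contain $v$ or $v'$ in their interior, replacing each such edge by two sub-edges of positive length summing to the original length. This yields a model $(G,\ell)$ with $v, v' \in V(G)$ while leaving the metric space $C$---and hence the intrinsically defined $\alpha(C)$---unchanged. Writing $\alpha_v(G)$ and $\alpha_{v'}(G)$ for the universal periods of this $G$ computed from the two basepoints, \cref{prop:evaluation} then gives $\alpha(C) = \ev(\alpha_v(G))$ and $\alpha(C) = \ev(\alpha_{v'}(G))$ according to which basepoint is used on the curve.

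To close the argument I would invoke \cref{prop:1}, which says that $\alpha(G)$ is independent of the vertex basepoint, so $\alpha_v(G) = \alpha_{v'}(G)$; applying $\ev$ yields that the two computations of $\alpha(C)$ agree. I expect the only point requiring care to be the simultaneous realization of $v$ and $v'$ as vertices of one model, together with the observation that refining the model does not alter the intrinsically defined $\alpha(C)$. Both follow from the refinement-invariance already built into the length pairing and the Abel--Jacobi map in \cref{sec:tropical-jacobians}, so the corollary is essentially immediate once the graph-level result is in hand.
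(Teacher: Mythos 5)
Your proposal is correct and follows essentially the same route as the paper: refine the model so that both basepoints are vertices, invoke \cref{prop:1} for basepoint independence of $\alpha(G)$, and transfer to $\alpha(C)$ via the evaluation identity from \cref{prop:evaluation}. The extra care you take about refinement-invariance of the intrinsically defined $\alpha(C)$ is a reasonable elaboration of a point the paper leaves implicit.
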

\begin{proof}
  Fix $v, v' \in C$.  After refining $G$ if necessary, we may assume that $v, v' \in V(G)$.  Then \cref{prop:1} implies that $\alpha(G)$ is the same whether computed using $v$ or $v'$.  Meanwhile, $\alpha(C) = \ev(\alpha(G))$ by \cref{prop:evaluation}, and in the proof of that result, it was shown specifically that $\Cer_v(C) = \ev_{*}\Cer_v(G)$; it follows that $\alpha(C)$ is the same whether we obtain it from $v$ or $v'$.
\end{proof}

\subsection{Rewriting $\Theta(\Upsilon)$ in terms of edge pairs}

\begin{lemma}\label{sec:explicit-formula-1}
  The variable $x_{e_i}$ does not appear in any coefficient of $\Theta(\Upsilon)$ for any $i \in [g]$.
\end{lemma}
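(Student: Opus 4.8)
The plan is to trace precisely where the variable $x_{e_i}$ can enter the formula \cref{eq:13} for $\Theta(\Upsilon)$ and to show that every such contribution is killed by the wedge structure. The starting observation is that edge-length variables enter only through $\pi$, via the identity $\pi(e) = x_e\sum_{m} f_m(e)\epsilon_m$ recorded in \cref{sec:explicit-representative}; hence $x_{e_i}$ appears in an expression $\pi(c)$ exactly when the $1$-chain $c$ has nonzero $e_i$-coefficient. Now $e_i$ is the unique non-tree edge of the fundamental cycle $\gamma_i$, and each fundamental cycle $\gamma_{i'}$ uses only edges of $E(T)\cup\set{e_{i'}}$; therefore $e_i$ lies in $\gamma_{i'}$ only when $i'=i$, and it never lies in any tree path $\delta_{i'}$. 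In particular $f_m(e_i)$ equals $1$ when $m=i$ and $0$ otherwise, so the crucial identity is $\pi(e_i) = x_{e_i}\epsilon_i$.

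First I would use this to localize the dependence. Inspecting \cref{eq:13}, the map $\pi$ is applied only to single edges $e_{i',j}$, to cycles $\gamma_{i'}$, and to paths $\delta_{i'}$; by the previous paragraph none of the $\delta_{i'}$, and none of the $\gamma_{i'}$ or $e_{i',j}$ with $i'\neq i$, involve $x_{e_i}$. Hence $x_{e_i}$ can occur only inside the single summand of \cref{eq:13} whose outer index equals $i$, namely $\epsilon_i \wedge \Xi_i$ where $\Xi_i$ denotes the parenthesized expression. Writing $e_i = e_{i,j_0}$ for the unique position $j_0$ at which the non-tree edge occurs in $\gamma_i$, the variable $x_{e_i}$ enters $\Xi_i$ in exactly two ways: through the factors $\pi(f_i(e_{i,j_0})e_{i,j_0}) = \pi(e_i) = x_{e_i}\epsilon_i$ in the double sum (when $k=j_0$ or $j=j_0$), and through $\pi(\gamma_i)$, whose $x_{e_i}$-part is $f_i(e_i)\pi(e_i) = x_{e_i}\epsilon_i$ (here $\pi(\delta_i)$ is free of $x_{e_i}$ since $\delta_i\subset T$).

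The decisive point is then purely algebraic: every $x_{e_i}$-contribution carries the factor $\epsilon_i$, and each sits wedged on the left with the outer $\epsilon_i$ of $\epsilon_i\wedge\Xi_i$, producing a $3$-fold wedge in which $\epsilon_i$ appears twice. Concretely the double-sum contributions become $x_{e_i}\,\epsilon_i\wedge\alpha\wedge\epsilon_i$ and $x_{e_i}\,\epsilon_i\wedge\epsilon_i\wedge\beta$ for suitable $\alpha,\beta\in N_R$, and the cycle-term contribution becomes $x_{e_i}\,\epsilon_i\wedge\epsilon_i\wedge\pi(\delta_i)$; since $\epsilon_i\wedge\epsilon_i = 0$, all of these vanish. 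Collecting the surviving terms, no coefficient of $\Theta(\Upsilon)$ retains a factor of $x_{e_i}$.

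I expect the only real work to be bookkeeping rather than a genuine obstacle. The one step that must be handled carefully is verifying that the two enumerated occurrences constitute the \emph{complete} list of places $x_{e_i}$ can appear; this is exactly where the structure of fundamental cycles relative to the spanning tree $T$ is used, to rule out any appearance of $e_i$ in the other summands, in the paths $\delta_{i'}$, or more than once within $\gamma_i$. Once this is confirmed, the conclusion is immediate from $\epsilon_i\wedge\epsilon_i = 0$.
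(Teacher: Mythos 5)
Your proposal is correct and follows essentially the same route as the paper's proof: observe that $x_{e_i}$ enters only through $\pi(e_i) = x_{e_i}\epsilon_i$, that this can occur only in the summand $\Theta(\epsilon_i \otimes \Upsilon_i)$ of \cref{eq:13}, and that every such term then carries $\epsilon_i$ twice and vanishes by the alternating property. The extra bookkeeping you supply (explicitly listing the occurrences in the double sum and in $\pi(\gamma_i)$) is a faithful expansion of the same argument.
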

\begin{proof}
  For each edge $e$, $\pi(e) = x_e\sum_{i=1}^g f_i(e)\epsilon_i \in x_eN$.  In particular, the variable $x_{e_i}$ appears in $\pi(e)$ only for $e = e_i$.  Since no $\delta_j$ contains $e_i$, and $\gamma_j$ contains $e_i$ only for $j = i$, the only computation $\Theta(\epsilon_j \otimes \Upsilon_j)$ in which $\pi(e_i)$ appears is for $j = i$ (see \cref{eq:13}).  But every term of $\Theta(\epsilon_i \otimes \Upsilon_i)$ with $\pi(e_i) = x_{e_i}\epsilon_i$ as a factor also has $\epsilon_i$ as a factor, so the alternating property implies that these terms all vanish.
\end{proof}
It is not hard to see from \cref{eq:13} that $\Theta(\Upsilon)$ is homogeneous of degree 2 in the variables $x_e$.  We now develop a characterization for when a pair of edges $e$ and $e'$ contributes a monomial $x_ex_{e'}\, \epsilon_i \wedge \epsilon_j \wedge \epsilon_k$ to $\Theta(\Upsilon)$.  Recall from \cref{sec:explicit-representative} that
\begin{equation*}
  f_i(e) \defeq x_e^{-1}[e,\gamma_i] = 
\begin{cases}
  0 & \text{ if $e \not\in \supp \gamma_i$}\\
  1 & \text{ if $e$ and $\gamma_i$ have the same orientation}\\
  -1 & \text{ if $e$ and $\gamma_i$ have opposite orientations.}
\end{cases}
\end{equation*}
Likewise, we define
\begin{equation*}
  g_i(e) \defeq x_e^{-1}[e,\delta_i] = 
\begin{cases}
  0 & \text{ if $e \not\in \supp \delta_i$}\\
  1 & \text{ if $e$ and $\delta_i$ have the same orientation}\\
  -1 & \text{ if $e$ and $\delta_i$ have opposite orientations.}
\end{cases}
\end{equation*}
Given distinct edges $e$ and $e'$ in $\gamma_i$, say that $e <_i e'$ if $e$ occurs before $e'$ in the ordering determined by the endpoint of $\delta_i$ and the cyclic orientation of $\gamma_i$.  Equivalently, in the notation of \cref{fig:cycle}, we declare $e_{i,j} <_i e_{i,j+1}$ for $0 \leq j \leq n_i - 2$.  Then let
\begin{equation*}
  h_i(e,e') \defeq 
\begin{cases}
  0 & \text{ if $e \not\in \supp \delta_i$, $e' \not\in \supp \delta_i$, or $e = e'$}\\
  1 & \text{ if $e <_i e'$}\\
  -1 & \text{ if $e' <_i e$.}
\end{cases}
\end{equation*}
Fix a subset $S \subset E(G) \times E(G)$ that contains exactly one element from $\set{(e,e'),(e',e)}$ for each pair of distinct edges $e$ and $e'$.  Likewise, in lieu of requiring that $i < j < k$, we instead fix a subset $S' \subset [g]^3$ that contains exactly one permutation of each tuple $(i,j,k) \in [g]^3$ of all distinct indices.

\begin{proposition}\label{prop:alpha-edge-char}
Fix a graph $G$ and let $\Upsilon$ be the $(1,2)$-chain defined by \cref{eq:26}.  Then we may write
\begin{equation*}
  \Theta(\Upsilon) = \sum_{\substack{(e,e') \in S \\ (i,j,k) \in S'}} a_{i,j,k}(e,e') x_ex_{e'}\,\epsilon_i \wedge \epsilon_j \wedge \epsilon_k,
\end{equation*}
where $a_{i,j,k}(e,e')$ takes values in $\set{0,\pm 2}$ and is nonzero precisely when, up to relabeling $(i,j,k)$, $\gamma_i$ contains both $e$ and $e'$, $\gamma_j$ contains $e$ but not $e'$, and $\gamma_k$ contains $e'$ but not $e$.  In particular,
\begin{equation}\label{eq:29}
  a_{i,j,k}(e,e') = 2f_j(e)f_k(e') \left( f_i(e)f_i(e')h_i(e,e') + 2f_i(e)g_j(e') - 2f_i(e')g_k(e) \right),
\end{equation}
and reversing the cyclic orientation of any one of the cycles $\gamma_i$, $\gamma_j$, or $\gamma_k$ flips the sign.
\end{proposition}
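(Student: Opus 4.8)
The plan is to obtain \eqref{eq:29} by expanding the already-derived expression \eqref{eq:13} into monomials and reading off a single coefficient. First I would substitute $\pi(e) = x_e\sum_{m} f_m(e)\epsilon_m$ (and the analogous expansions of $\pi(\gamma_i)$ and $\pi(\delta_i)$) into \eqref{eq:13}, so that $\Theta(\Upsilon)$ becomes an explicit integral combination of monomials $x_ax_b\,\epsilon_\ell\wedge\epsilon_m\wedge\epsilon_n$. By \cref{sec:explicit-formula-1} only tree edges can occur, so I may assume $e,e'$ are distinct tree edges, and I would split the sum into the ``Piece~A'' coming from $2\sum_{0\le k<j\le n_i-1}$ and the ``Piece~B'' coming from $-4\,\pi(\gamma_i)\wedge\pi(\delta_i)$. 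Fixing representatives $(e,e')\in S$ and $(i,j,k)\in S'$, the coefficient $a_{i,j,k}(e,e')$ is then the signed sum over all ways a monomial $x_ax_b\,\epsilon_\ell\wedge\epsilon_m\wedge\epsilon_n$ equals $\pm x_ex_{e'}\,\epsilon_i\wedge\epsilon_j\wedge\epsilon_k$, i.e. over the finitely many assignments with $\{a,b\}=\{e,e'\}$ and $(\ell,m,n)$ a permutation of $(i,j,k)$.

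Next I would carry out this enumeration. Piece~A contributes only through an index $\ell$ whose cycle $\gamma_\ell$ contains both $e$ and $e'$; the two admissible edge orders are governed by $h_\ell(e,e')$, and the requirements $f_m(e)\ne 0$, $f_n(e')\ne 0$ pin down $(m,n)$ while the reordering of the wedge supplies a sign. Piece~B contributes only when $\{e,e'\}$ splits as one edge in $\gamma_\ell$ and the other in $\delta_\ell$; here the crucial simplification is that $\delta_\ell$ is edge-disjoint from $\gamma_\ell$, so any term forcing a single index to carry an edge in both its cycle and its path drops out, and $g_\ell$ vanishes on edges of $\gamma_\ell$. Assembling the surviving contributions -- one from Piece~A and one apiece from the two edge orders of Piece~B -- and factoring out $2f_j(e)f_k(e')$ yields exactly \eqref{eq:29}. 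The sign behaviour is then immediate from the formula: reversing the cyclic orientation of $\gamma_i$ sends $f_i\mapsto -f_i$ and $h_i\mapsto -h_i$ (while $\delta_i$, and hence $g_i$, are untouched), so the whole bracket flips; reversing $\gamma_j$ or $\gamma_k$ flips the single prefactor $f_j(e)$ or $f_k(e')$ while leaving the bracket alone.

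Finally, for the $\set{0,\pm 2}$-valuedness and the precise nonvanishing pattern I would invoke basepoint independence (\cref{prop:1}): since $a_{i,j,k}(e,e')$ is a coefficient of the basepoint-independent chain $\Theta(\Upsilon)$, I am free to evaluate it at whatever basepoint is most convenient for the given tuple. In the claimed configuration -- $e,e'\in\gamma_i$, $e\in\gamma_j$, $e'\in\gamma_k$ -- I would take $v$ to be the endpoint of $e$ lying in the same component of $T\setminus\set{e}$ as $e'$. Using that the tree support of each fundamental cycle is a connected path, this choice makes $\delta_j$ trivial (so $g_j(e')=0$) and keeps $\delta_k$ clear of $e$ (so $g_k(e)=0$), collapsing \eqref{eq:29} to $2f_j(e)f_k(e')f_i(e)f_i(e')h_i(e,e')$, a product of five factors in $\set{0,\pm1}$ that is nonzero exactly in the configuration; hence $a_{i,j,k}(e,e')=\pm 2$ there. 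The main obstacle is the converse, showing the coefficient vanishes outside this pattern. There the same two ingredients do the work: when no single one of $\gamma_i,\gamma_j,\gamma_k$ contains both edges, the ``side'' argument for fundamental cycles forces the basepoint-independent differences of the $g$-values controlling Piece~B to cancel, while Piece~A is absent. Organizing these cancellations across all non-configuration membership patterns -- rather than the formula itself -- is the delicate heart of the argument.
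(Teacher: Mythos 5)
Your overall strategy is the same as the paper's: expand \eqref{eq:13} into monomials (splitting off the $-4\,\pi(\gamma_i)\wedge\pi(\delta_i)$ piece), read off a general coefficient formula, and then use basepoint independence of $\Theta(\Upsilon)$ to evaluate in a convenient configuration. Your derivation of \eqref{eq:29} in the case-(b) pattern, your verification that the coefficient is $\pm 2$ there (choosing $v$ to be an endpoint of $e$ so that $g_j(e')=g_k(e)=0$), and your sign-flip argument (reversing $\gamma_t$ negates $f_t$ and $h_t$ but not $g_t$) are all correct and match the paper's computation. One small note: the fact you need is that $\Theta(\Upsilon)$ itself is basepoint-independent, which is \cref{lem:basep-indep}, not \cref{prop:1}; you invoke the right statement, just under the wrong label.

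The genuine gap is the converse direction, and you flag it yourself. The full coefficient before simplification has the shape $2(A_iB_{j,k}-A_jB_{i,k}+A_kB_{i,j})$ with $A_t = f_t(e)f_t(e')h_t(e,e')-2f_t(e)g_t(e')+2f_t(e')g_t(e)$ and $B_{m,n}=f_m(e)f_n(e')-f_m(e')f_n(e)$, and your vanishing argument only treats the sub-case in which no cycle among $\gamma_i,\gamma_j,\gamma_k$ contains both $e$ and $e'$ (so that every $h$-term is absent and only $g$-differences must cancel). It does not handle the patterns where one or, worse, two of the three cycles contain both edges but the case-(b) splitting fails --- e.g.\ $\gamma_i$ and $\gamma_j$ both containing $e$ and $e'$ while $\gamma_k$ contains only $e$. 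There the terms $A_iB_{j,k}$ and $A_jB_{i,k}$ both survive with nonzero $B$-factors, and the required cancellation amounts to identities such as $f_i(e)h_i(e,e')=f_j(e)h_j(e,e')$ relating the cyclic orders of two different fundamental cycles through the same pair of tree edges; this needs an argument about how the shared tree path sits inside both cycles, not just a statement about $g$-values. The paper closes exactly this by contracting the three components of $T\setminus\{e,e'\}$ to obtain a three-vertex graph $G'$ on which $f_t,g_t,h_t$ are unchanged, normalizing orientations via the sign-flip observation, and reading off each of the finitely many membership patterns from \cref{fig:1}. Without that reduction (or an equivalent analysis of the remaining patterns), the ``precisely when'' characterization --- which is the part of the proposition used later in \cref{contraction}, \cref{lem:3}, and \cref{lem:ladder} --- is not established.
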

\begin{proof}
  From \cref{eq:13}, we may write 
\begin{align}
  \Theta(\Upsilon)
  &= \sum_{i=1}^g\epsilon_i \wedge \left( 2\sum_{0 \leq k < j \leq n_i-1} \pi(f_i(e_{i,k})e_{i,k}) \wedge \pi(f_i(e_{i,j})e_{i,j}) - 4 \pi(\gamma_i) \wedge \pi(\delta_i) \right)\nonumber\\
  &= 2\sum_{i=1}^g \epsilon_i \wedge \left( \sum_{\substack{e,e' \in \supp\gamma_i \\ e <_i e'}} f_i(e)\pi(e) \wedge f_i(e')\pi(e') - 2\pi(\gamma_i) \wedge \pi(\delta_i) \right)\nonumber\\
  &= 2\sum_{i=1}^g \epsilon_i \wedge \left( \sum_{(e,e') \in S} f_i(e)f_i(e')h_i(e,e') \pi(e) \wedge \pi(e') - 2\pi(\gamma_i) \wedge \pi(\delta_i) \right)\nonumber\\
  \intertext{We may write $\gamma_i = \sum_{e \in E(G)} f_i(e)e$ and $\delta_i = \sum_{e \in E(G)} g_i(e)e$, so that}
  &= 2\sum_{i=1}^g \epsilon_i \wedge \left( \sum_{(e,e') \in S} f_i(e)f_i(e')h_i(e,e') \pi(e) \wedge \pi(e') - 2\sum_{e,e' \in E(G)} f_i(e)g_i(e') \pi(e) \wedge \pi(e') \right)\nonumber\\
  &= \sum_{(e,e') \in S} \sum_{i=1}^g 2\left( f_i(e)f_i(e')h_i(e,e') - 2f_i(e)g_i(e') + 2f_i(e')g_i(e) \right) \epsilon_i \wedge \pi(e) \wedge \pi(e').\nonumber\\
  \intertext{Expanding $\pi(e) = x_e\sum_{j=1}^gf_j(e)\epsilon_j$ and $\pi(e') = x_{e'}\sum_{k=1}^gf_k(e')\epsilon_k$, we obtain}
  &= \sum_{(e,e') \in S} \sum_{i,j,k} 2\left( f_i(e)f_i(e')h_i(e,e') - 2f_i(e)g_i(e') + 2f_i(e')g_i(e) \right) f_j(e) f_k(e') x_ex_{e'}\, \epsilon_i \wedge \epsilon_j \wedge \epsilon_k.\nonumber\\
  \intertext{We reindex by $S'$ to get}
  &= \sum_{(e,e') \in S} \sum_{(i,j,k) \in S'} a_{i,j,k}(e,e') x_ex_{e'}\, \epsilon_i \wedge \epsilon_j \wedge \epsilon_k,\label{eq:2}
\end{align}
where 
\begin{gather*}
  a_{i,j,k}(e,e') \defeq 2(A_iB_{j,k} - A_jB_{i,k} + A_kB_{i,j})\\[7pt]
  A_t \defeq f_t(e)f_t(e')h_t(e,e') - 2f_t(e)g_t(e') + 2f_t(e')g_t(e)\\[7pt]
  B_{m,n} \defeq f_m(e) f_n(e') - f_m(e')f_n(e).
\end{gather*}

As one might expect, reordering the tuple $(e,e')$ does not affect $a_{i,j,k}(e,e')$, while permuting $(i,j,k)$ changes it by the sign of the permutation.  We record here the possible combinations of values of $\abs{f_{\cdot}(\cdot)}$ up to such reordering:
\begin{gather*}
  \begin{array}{ c | c c c }
    \text{(a)} & i & j & k \\
    \hline
    e  & 1 & 1 & * \\
    e' & 1 & 1 & *
  \end{array}
  \qquad
  \begin{array}{ c | c c c }
    \text{(b)} & i & j & k \\
    \hline
    e  & 1 & 1 & 0 \\
    e' & 1 & 0 & 1
  \end{array}
  \qquad
  \begin{array}{ c | c c c }
    \text{(c)} & i & j & k \\
    \hline
    e  & * & 1 & 1 \\
    e' & * & 0 & 0
  \end{array}
  \qquad
  \begin{array}{ c | c c c }
    \text{(d)} & i & j & k \\
    \hline
    e  & * & * & 0 \\
    e' & * & * & 0
  \end{array}.
\end{gather*}
A $*$ indicates that the corresponding entry can have value either $0$ or $1$.  While these cases are not all pairwise disjoint, they do cover all the possibilities.  Indeed, up to permuting the rows and columns, if zero or one of the six values is $0$, then we are in case (a).  If exactly two values are $0$, then we fall into one of (b), (c), or (d).  If three are $0$, then we are in cases (c) or (d).  Finally, if at least four values are $0$, then case (d) applies.

We claim that the $a_{i,j,k}(e,e') \neq 0$ only in case (b).  As a shortcut, we may consider only edges $e$ and $e'$ in $T$; by \cref{sec:explicit-formula-1}, the remaining edges do not contribute terms.  Then $T \setminus \set{e,e'}$ has three connected components; let $G'$ be the graph obtained from $G$ by contracting each of these components to a vertex and removing the additional edges $e_t$ for $t \not\in \set{i,j,k}$.  We depict in \cref{fig:1} the resulting graph for each of the cases (a), (b), and (c) with the edges $e_i,e_j,e_k$ drawn as necessary.  The basepoint $v$ descends to one of the three vertices of $G'$, but by \cref{lem:basep-indep}, we may fix it arbitrarily.  We remark that the values of $f_t$, $g_t$, and $h_t$ on $e$ and $e'$ remain unchanged for $t \in \set{i,j,k}$ when passing from $G$ to $G'$.

The following observations will be helpful in further narrowing down the cases.  By \cref{rem:1}, $\Theta(\Upsilon)$ is independent of the orientations on the edges of $G$.  Then without loss of generality, we may orient the edges of $G'$ as shown in \cref{fig:1}.  Meanwhile, replacing $\gamma_i$ with the cycle $\bar\gamma_i \defeq -\gamma_i$ having the opposite cyclic orientation of edges (i.e., the reverse of the partial order $<_i$) changes the sign of $f_i$ and $h_i$.  Hence, only $A_i$, $B_{i,k}$, and $B_{i,j}$ change sign, thereby causing $a_{i,j,k}(e,e')$ also to change sign.  The same applies to the indices $j$ and $k$.  Notably, this means that the choice for each $t$ of $\gamma_t$ versus $\bar\gamma_t$ does not affect whether or not $a_{i,j,k}(e,e') = 0$, so in \cref{fig:1}, we orient $\gamma_t$ (not depicted) in the same direction as $e_t$.  Finally, recall from \cref{sec:explicit-representative} that the path $\delta_t$ is defined uniquely by the fact that it lies in $T$ and has disjoint support from the cycle $\gamma_t$.

\begin{figure}[h!]
\centering
\subfigure[\label{fig:1a}]{
\begin{tikzpicture}[semithick, scale=1.6, >=Stealth, decoration={markings,mark=at position 0.5 with {\arrow[xshift={2pt + 2.25\pgflinewidth}]{Latex[scale=0.8]},}}]
  \filldraw (0,0) circle [radius=0.03] node[anchor=east] {$v$};
  \filldraw (1,0) circle [radius=0.03];
  \filldraw (2,0) circle [radius=0.03] node[anchor=west] {$\phantom{v}$};
  \draw[postaction={decorate}, very thick] (0,0) -- node[anchor=south] {$e$} (1,0);
  \draw[postaction={decorate}, very thick] (1,0) -- node[anchor=south] {$e'$} (2,0);
  \draw[postaction={decorate}, bend right=60] (2,0) to node[midway, anchor=south] {$e_i$} (0,0);
  \draw[postaction={decorate}, bend left=60] (2,0) to node[midway, anchor=north] {$e_j$} (0,0);
\end{tikzpicture}
}
\subfigure[\label{fig:1b}]{
  \begin{tikzpicture}[semithick, scale=1.6, >=Stealth, decoration={markings,mark=at position 0.5 with {\arrow[xshift={2pt + 2.25\pgflinewidth}]{Latex[scale=0.8]},}}]
  \draw[postaction={decorate}, bend left=60, color=white] (2,0) to node[midway, anchor=north] {$e_j$} (0,0);
  \filldraw (0,0) circle [radius=0.03] node[anchor=east] {$v$};
  \filldraw (1,0) circle [radius=0.03];
  \filldraw (2,0) circle [radius=0.03] node[anchor=west] {$\phantom{v}$};
  \draw[postaction={decorate}, very thick] (0,0) -- node[anchor=south] {$e$} (1,0);
  \draw[postaction={decorate}, very thick] (1,0) -- node[anchor=south] {$e'$} (2,0);
  \draw[postaction={decorate}, bend right=60] (2,0) to node[midway, anchor=south] {$e_i$} (0,0);
  \draw[postaction={decorate}, bend left=80] (1,0) to node[midway, anchor=north] {$e_j$} (0,0);
  \draw[postaction={decorate}, bend left=80] (2,0) to node[midway, anchor=north] {$e_k$} (1,0);
\end{tikzpicture}
}
\subfigure[\label{fig:1c}]{
  \begin{tikzpicture}[semithick, scale=1.6, >=Stealth, decoration={markings,mark=at position 0.5 with {\arrow[xshift={2pt + 2.25\pgflinewidth}]{Latex[scale=0.8]},}}]
  \draw[postaction={decorate}, bend left=60, color=white] (2,0) to node[midway, anchor=north] {$e_j$} (0,0);
  \filldraw (0,0) circle [radius=0.03] node[anchor=east] {$v$};
  \filldraw (1,0) circle [radius=0.03];
  \filldraw (2,0) circle [radius=0.03] node[anchor=west] {$\phantom{v}$};
  \draw[postaction={decorate}, very thick] (0,0) -- node[anchor=south] {$e$} (1,0);
  \draw[postaction={decorate}, very thick] (1,0) -- node[anchor=south] {$e'$} (2,0);
  \draw[postaction={decorate}, bend right=80] (1,0) to node[midway, anchor=south] {$e_j$} (0,0);
  \draw[postaction={decorate}, bend left=80] (1,0) to node[midway, anchor=north] {$e_k$} (0,0);
\end{tikzpicture}
}
\caption[]{}\label{fig:1}
\end{figure}

To reiterate, all of the choices that went into drawing \cref{fig:1} were made without loss of generality up to a sign.  Therefore, one may read off the values of $f_t$, $g_t$, and $h_t$ on $e$ and $e'$ for each of the relevant indices of $t$ from $\set{i,j,k}$; one finds that $a_{i,j,k}(e,e')$ vanishes in cases (a) and (c) and equals $-2$ in case (b).  That $a_{i,j,k}(e,e')$ vanishes in case (d) is not hard to see, since $f_k(e) = f_k(e') = 0$ forces $B_{j,k} = B_{i,k} = A_k = 0$.  The particular expression for the coefficient given by \cref{eq:29} in the statement of the result follows from case (b) by plugging into $a_{i,j,k}(e,e')$ only the values of the indicator functions that vanish.
\end{proof}

\section{Forbidden minor characterization of nontriviality of the Ceresa period}\label{sec:forb-minor-char}

With the powerful tools of \cref{sec:useful-tools} in hand, we now endeavor to prove \cref{thm:A} by following the approach of \cite{corey2022ceresa}.  In \cref{sec:examples}, we show that $K_4$ and $L_3$, the graphs depicted in \cref{fig:graphs}, have nontrivial Ceresa period.  In \cref{sec:contraction,sec:deletion}, we show that having nontrivial Ceresa period is preserved under certain contraction and deletion operations, respectively.  Finally, in \cref{sec:graphs-hyper-type}, we show that graphs of hyperelliptic type have trivial Ceresa period and that there are no other cases left to consider.

Throughout this section, whenever we are working with two graphs $G$ and $G'$, for each object $O$ associated to $G$ that was defined in \cref{sec:definition,sec:useful-tools}, we let $O'$ denote the corresponding object for $G'$.

\begin{figure}[htbp]
\centering
\subfigure[$K_4$\label{fig:k4}]{\begin{tikzpicture}[semithick, scale=1.6, >=Stealth, decoration={markings,mark=at position 0.5 with {\arrow[xshift={2pt + 2.25\pgflinewidth}]{Latex[scale=1.2]},}}]
  \filldraw (0,0) circle [radius=0.03];
  \foreach \x in {0,1,2}{
    \filldraw (120*\x+90:1) circle [radius=0.03];
  }
  \draw[postaction={decorate}] (210:1) arc [start angle=210, end angle=90, radius=1];
  \draw (150:1.2) node {$e_1$};
  \draw[postaction={decorate}] (90:1) arc [start angle=90, end angle=-30, radius=1];
  \draw (270:1.2) node {$e_2$};
  \draw[postaction={decorate}] (330:1) arc [start angle=330, end angle=210, radius=1];
  \draw (30:1.2) node {$e_3$};
  \draw[postaction={decorate}] (0,0) -- (330:1);
  \draw (355:0.5) node {$e_4$};
  \draw[postaction={decorate}] (0,0) -- (90:1);
  \draw (65:0.5) node {$e_5$};
  \draw[postaction={decorate}] (0,0) -- (210:1);
  \draw (185:0.5) node {$e_6$};
  \draw[very thick] (0,0) -- (330:1);
  \draw[very thick] (0,0) -- (90:1);
  \draw[very thick] (0,0) -- (210:1);
  \draw (0,-0.25) node {$v$};
\end{tikzpicture}

\subfigure[$L_3$\label{fig:l3}]{\begin{tikzpicture}[semithick, xscale=-1, scale=1.6, >=Stealth, decoration={markings,mark=at position 0.5 with {\arrow[xshift={2pt + 2.25\pgflinewidth}]{Latex[scale=1.2]},}}]
  \foreach \x in {0,1,2}{
    \filldraw (120*\x+90:1) circle [radius=0.03];
  }
  \draw[postaction={decorate}] (90:1) arc [start angle=90, end angle=-30, radius=1];
  \draw (30:1.2) node {$e_1$};
  \draw[postaction={decorate}] (330:1) arc [start angle=330, end angle=210, radius=1];
  \draw (270:1.2) node {$e_2$};
  \draw[postaction={decorate}] (90:1) arc [start angle=90, end angle=210, radius=1];
  \draw (150:1.2) node {$e_3$};
  \draw[postaction={decorate}] (90:1) -- (210:1);
  \draw (150:0.31) node {$e_4$};
  \draw[postaction={decorate}] (330:1) -- (90:1);
  \draw (30:0.31) node {$e_6$};
  \draw[very thick] (330:1) -- (90:1);
  \draw[postaction={decorate}] (210:1) -- (330:1);
  \draw (270:0.31) node {$e_5$};
  \draw[very thick] (210:1) -- (330:1);
  \draw (-30:1.2) node {$v$};
\end{tikzpicture}

\caption{Minimal graphs with nontrivial Ceresa period\label{fig:graphs}}
\end{figure}
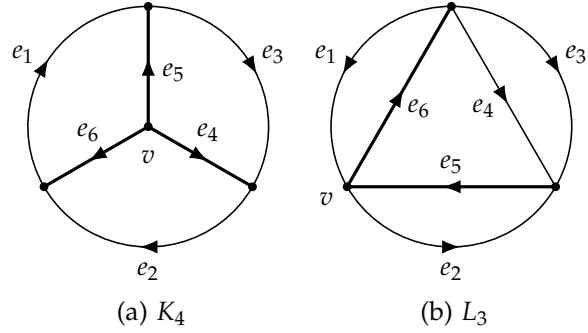

\subsection{Base cases}\label{sec:examples}

\begin{example}\label{ex:k4}
  Let $G \defeq K_4$ be the graph with basepoint $v$ and oriented edges $e_i$ as depicted in \cref{fig:k4}, with $T = \set{e_4,e_5,e_6}$, paths $\delta_i = 0$ for all $i$, and the cyclic orientation of $\gamma_i$ chosen to match the orientation of $e_i$.  Let $x_i \defeq x_{e_i}$.  Then $[\cdot,\cdot]$ has Gram matrix
\begin{equation*}  
\begin{pmatrix}
  x_1+x_5+x_6 & -x_6 & -x_5 \\
  -x_6 & x_2+x_4+x_6 & -x_4 \\
  -x_5 & -x_4 & x_3+x_4+x_5.
\end{pmatrix}.
\end{equation*}
One can show using either \cref{eq:13} or \cref{prop:alpha-edge-char} that 
\begin{equation*}
  \Theta(\Upsilon) = 2(x_4x_5+x_4x_6+x_5x_6)\,\epsilon_1 \wedge \epsilon_2 \wedge \epsilon_3.
\end{equation*}
Meanwhile, \cref{eq:24,eq:25} imply that $\sP$ is generated by the elements
\begin{align*}
  2(x_{2} x_{5} + x_{4} x_{5} + x_{4} x_{6} + x_{5} x_{6})\,&\epsilon_1 \wedge \epsilon_2 \wedge \epsilon_3, \\
  2(-x_{4} x_{5} - x_{3} x_{6} - x_{4} x_{6} - x_{5} x_{6})\,&\epsilon_1 \wedge \epsilon_2 \wedge \epsilon_3, \\
  2(x_{2} x_{3} + x_{2} x_{4} + x_{3} x_{4} + x_{2} x_{5} + x_{4} x_{5} + x_{3} x_{6} + x_{4} x_{6} + x_{5} x_{6})\,&\epsilon_1 \wedge \epsilon_2 \wedge \epsilon_3 \\
  2(x_{1} x_{4} + x_{4} x_{5} + x_{4} x_{6} + x_{5} x_{6})\,&\epsilon_1 \wedge \epsilon_2 \wedge \epsilon_3, \\
  2(-x_{1} x_{3} - x_{1} x_{4} - x_{1} x_{5} - x_{3} x_{5} - x_{4} x_{5} - x_{3} x_{6} - x_{4} x_{6} - x_{5} x_{6})\,&\epsilon_1 \wedge \epsilon_2 \wedge \epsilon_3, \\
  2(x_{4} x_{5} + x_{3} x_{6} + x_{4} x_{6} + x_{5} x_{6})\,&\epsilon_1 \wedge \epsilon_2 \wedge \epsilon_3, \\
  2(x_{1} x_{2} + x_{1} x_{4} + x_{2} x_{5} + x_{4} x_{5} + x_{1} x_{6} + x_{2} x_{6} + x_{4} x_{6} + x_{5} x_{6})\,&\epsilon_1 \wedge \epsilon_2 \wedge \epsilon_3, \\
  2(-x_{1} x_{4} - x_{4} x_{5} - x_{4} x_{6} - x_{5} x_{6})\,&\epsilon_1 \wedge \epsilon_2 \wedge \epsilon_3, \\
  2(x_{2} x_{5} + x_{4} x_{5} + x_{4} x_{6} + x_{5} x_{6})\,&\epsilon_1 \wedge \epsilon_2 \wedge \epsilon_3.
\end{align*}
Simplifying, we are left with six generators:
\begin{align*}
  2(x_{1} x_{4} - x_{3} x_{6})\,&\epsilon_1 \wedge \epsilon_2 \wedge \epsilon_3 \\  
  2(x_{2} x_{5} - x_{3} x_{6})\,&\epsilon_1 \wedge \epsilon_2 \wedge \epsilon_3 \\
  2(x_{1} x_{2} + x_{2} x_{5} + x_{1} x_{6} + x_{2} x_{6})\,&\epsilon_1 \wedge \epsilon_2 \wedge \epsilon_3 \\
  2(x_{1} x_{3} + x_{1} x_{5} + x_{3} x_{5} + x_{3} x_{6})\,&\epsilon_1 \wedge \epsilon_2 \wedge \epsilon_3 \\
  2(x_{2} x_{3} + x_{2} x_{4} + x_{3} x_{4} + x_{3} x_{6})\,&\epsilon_1 \wedge \epsilon_2 \wedge \epsilon_3 \\
  2(x_{4} x_{5} + x_{3} x_{6} + x_{4} x_{6} + x_{5} x_{6})\,&\epsilon_1 \wedge \epsilon_2 \wedge \epsilon_3.
\end{align*}
Suppose that we could obtain $\Theta(\Upsilon)$ as some $\ZZ$-linear combination of these generators.  Since $x_1x_4$, $x_1x_2$, $x_1x_3$, and $x_2x_3$ each appear in only one generator and not in $\Theta(\Upsilon)$, those generators cannot contribute.  This leaves the second and the sixth.  Continuing along the same lines, the second generator uniquely contains the term $x_2x_5$, so it must be trivial.  Now the sixth generator uniquely contains $x_3x_6$, so it also vanishes, a contradiction.  In other words, $\alpha(K_4) \neq 0$, with 
\begin{equation*}
\alpha(K_4) = 2(x_4x_5+x_4x_6+x_5x_6)\,\epsilon_1 \wedge \epsilon_2 \wedge \epsilon_3 \pmod{\sP}.
\end{equation*}
\end{example}

\begin{example}\label{ex:l3}
  Consider the graph $G \defeq L_3$ with basepoint $v$ and oriented edges $e_i$ as shown in \cref{fig:l3}, with $T = \set{e_5,e_6}$, paths $\delta_i = 0$ for all $i$, and the cyclic orientation of $\gamma_i$ matching $e_i$.  We again abbreviate $x_i \defeq x_{e_i}$.  Then $[\cdot,\cdot]$ has Gram matrix
\begin{equation*}
\begin{pmatrix}
  x_1+x_6 & 0 & x_6 & x_6 \\
  0 & x_2+x_5 & x_5 & x_5 \\
  x_6 & x_5   & x_3+x_5+x_6 & x_5+x_6 \\
  x_6 & x_5   & x_5+x_6 & x_4+x_5+x_6
\end{pmatrix}.
\end{equation*}
One can show that
\begin{equation*}
  \Theta(\Upsilon) = 2x_5x_6(\epsilon_1 \wedge \epsilon_2 \wedge \epsilon_4 + \epsilon_1 \wedge \epsilon_2 \wedge \epsilon_3).
\end{equation*}
By \cref{prop:evaluation}, to prove that $\alpha(L_3) \neq 0$, it suffices to show that some tropical curve overlying $L_3$ has nontrivial Ceresa period.  Indeed, let $C$ be the tropical curve obtained by evaluating every edge length $x_i$ to $1$, with the corresponding map $\ev \maps N_R \to N_{\RR}$.  Then $\sP_C$ is generated by the four elements
\begin{alignat*}{4}
  2\epsilon_2 \wedge \epsilon_3 \wedge \epsilon_4 && && {}+ 2\epsilon_1 \wedge \epsilon_2 \wedge \epsilon_4 && {}+ 2\epsilon_1 \wedge \epsilon_2 \wedge \epsilon_3 &, \\
  && \phantom{{}+{}} 2\epsilon_1 \wedge \epsilon_3 \wedge \epsilon_4 && {}+ 2\epsilon_1 \wedge \epsilon_2 \wedge \epsilon_4 && {}+ 2\epsilon_1 \wedge \epsilon_2 \wedge \epsilon_3 &, \\
  && && 4\epsilon_1 \wedge \epsilon_2 \wedge \epsilon_4 && &,\\
  && && && 4\epsilon_1 \wedge \epsilon_2 \wedge \epsilon_3 &.
\end{alignat*}
Let $\Upsilon_C \defeq \ev_{*}\Upsilon$.  Then $\Theta(\Upsilon_C) = 2\epsilon_1 \wedge \epsilon_2 \wedge \epsilon_4 + 2\epsilon_1 \wedge \epsilon_2 \wedge \epsilon_3$, which is clearly not in $\sP_C$, as desired.  Therefore, $\alpha(L_3) \neq 0$, with 
\begin{equation*}
  \alpha(L_3) = 2x_5x_6(\epsilon_1 \wedge \epsilon_2 \wedge \epsilon_4 + \epsilon_1 \wedge \epsilon_2 \wedge \epsilon_3) \pmod {\sP}.
\end{equation*}
\end{example}

\subsection{Contraction}\label{sec:contraction}

Given a graph $G$ with a non-loop edge $a$, we let $G / a$ denote the graph obtained from $G$ by contracting $a$.  We identify $E(G) = E(G / a) \sqcup \set{a}$ in the usual way.

\begin{proposition}\label{contraction}
  Let $G$ be a graph with a non-loop edge $a$.  If $\alpha(G)$, then $\alpha(G/a) = 0$.  Moreover, if $a$ is a separating edge, then the converse also holds.
\end{proposition}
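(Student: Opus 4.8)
The plan is to compare the explicit representative $\Theta(\Upsilon)$ and the lattice $\sP$ of $G$ with those of $G/a$ by means of the ring homomorphism $\phi \maps R \to R'$ that sends $x_a \mapsto 0$ and fixes every other variable; here I read the hypothesis of the first implication as $\alpha(G) = 0$. First I would fix a spanning tree $T$ of $G$ that contains $a$, which is possible exactly because $a$ is a non-loop edge. Contraction carries $T$ to a spanning tree of $G/a$ and fixes the set of non-tree edges $e_1, \ldots, e_g$, so the chosen cycle basis $\gamma_1, \ldots, \gamma_g$ descends to a cycle basis of $G/a$. Since contracting a non-loop edge is a homotopy equivalence, it preserves the genus and induces an identification $N \cong N'$ under which $\epsilon_i \leftrightarrow \epsilon'_i$; accordingly $\phi$ induces a surjection $N_R^{\wedge 3} \to (N')^{\wedge 3}_{R'}$ whose target contains both $\Theta(\Upsilon')$ and $\sP'$.

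For the forward implication I would show that $\phi$ transports the data of $G$ onto that of $G/a$. By \cref{prop:alpha-edge-char} the coefficient $a_{i,j,k}(e,e')$ is built from the indicator functions $f_t, g_t, h_t$ evaluated only at the pair $(e,e')$; for $e, e' \neq a$ these values are unchanged by contracting $a$, since contraction merely deletes $a$ from each cycle and tree path. After matching orientation conventions, which is permitted by \cref{rem:1}, we therefore have $a_{i,j,k}(e,e') = a'_{i,j,k}(e,e')$ whenever $e, e' \neq a$. Since every monomial of $\Theta(\Upsilon)$ that involves $x_a$ is killed by $\phi$ while the rest are precisely the monomials of $\Theta(\Upsilon')$, this gives $\phi(\Theta(\Upsilon)) = \Theta(\Upsilon')$. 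The same bookkeeping applied to the Gram entry $[\gamma_j,\gamma_m] = \sum_e f_j(e) f_m(e) x_e$ shows that $\phi$ sends the generators $2\epsilon_i \wedge \pi(\gamma_j) \wedge \pi(\gamma_k)$ of $\sP$ from \cref{eq:24} to the generators of $\sP'$, so $\phi(\sP) \subseteq \sP'$. Hence $\Theta(\Upsilon) \in \sP$ forces $\Theta(\Upsilon') = \phi(\Theta(\Upsilon)) \in \sP'$, i.e.\ $\alpha(G) = 0$ implies $\alpha(G/a) = 0$.

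For the converse I would exploit that a separating edge lies on no cycle, so $f_i(a) = 0$ for all $i$. By the combinatorial criterion in \cref{prop:alpha-edge-char}, a monomial $x_a x_{e'}$ can appear in $\Theta(\Upsilon)$ only if some $\gamma_i$ contains $a$; as none does, $\Theta(\Upsilon)$ is free of $x_a$. The vanishing $f_i(a) = 0$ likewise makes every Gram entry free of $x_a$, so the generators of $\sP$ coincide with those of $\sP'$ and $\sP = \sP'$ under the identification above. Consequently $\phi$ restricts to the identity on both $\Theta(\Upsilon)$ and $\sP$, yielding $\alpha(G) = \alpha(G/a)$ outright; in particular $\alpha(G/a) = 0$ gives $\alpha(G) = 0$.

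The step I expect to be most delicate is the equality $\phi(\Theta(\Upsilon)) = \Theta(\Upsilon')$, namely checking that the coefficients $a_{i,j,k}(e,e')$ really do agree for $e, e' \neq a$ once the spanning tree, the cyclic orders of the $\gamma_i$, the tree paths $\delta_i$, and the basepoint are all matched across $G$ and $G/a$. Basepoint independence (\cref{prop:1}) and orientation independence (\cref{rem:1}) dispose of most of the ambiguity, but one must still confirm that the cyclic order $<_i$ governing $h_i$ and the tree paths governing $g_i$ transport correctly through the contraction. This verification, rather than any new idea, is where the genuine labor lies.
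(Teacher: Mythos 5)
Your proposal is correct and follows essentially the same route as the paper: both arguments use the projection $x_a \mapsto 0$ (the paper's $\rho$, split by the inclusion $\iota$ missing $x_a$) together with a spanning tree containing $a$, the resulting identification of cycle bases and of $N$ with $N'$, and \cref{prop:alpha-edge-char} to see that $\rho$ carries $\Theta(\Upsilon)$ to $\Theta'(\Upsilon')$ and the generators of $\sP$ from \cref{eq:24} onto those of $\sP'$; the converse for a separating edge is likewise handled identically, via $f_i(a)=0$ forcing $x_a$ to be absent from both $\Theta(\Upsilon)$ and the Gram entries. The delicate point you flag --- that $f_t$, $g_t$, $h_t$ agree on pairs of edges of $G/a$ --- is exactly the step the paper also isolates (and settles by choosing the basepoint $a^-$ and descending $T$, orientations, and the $\delta_i$).
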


\noindent Although we do not need to use the explicit $(1,2)$-chain $\Upsilon$ to prove the first part of this result, it is nonetheless helpful in making the argument simpler.

\begin{proof}

Let $G' \defeq G / a$.  We shall declare $v \defeq a^-$ to be the basepoint of both $G$ and $G'$.  Fix a spanning tree $T$ of $G$ containing $a$ and let $T' \defeq T / a$ be the corresponding spanning tree in $G'$.  Let $\gamma_1,\ldots,\gamma_g$ be the basis of $H_1(G,\ZZ) \cong H_1(G',\ZZ)$ determined by $T$ in $G$ and $T'$ in $G'$.  Finally, choose orientations of the edges and cycles in $G$; these descend to $G'$, allowing us to define both $\Upsilon$ and $\Upsilon'$ as in \cref{sec:explicit-representative}.

Let $\iota \maps R' \to R$ be the natural inclusion, which misses $x_a$, and define a projection map $\rho \maps R \to R'$ via $x_a \mapsto 0$ and $x_e \mapsto x_e$ for $e \neq a$.  Notice that $\rho \circ \iota = \id$.  Then the induced maps $\iota \maps N'_{R'} \to N_R$ and $\rho \maps N_R \to N'_{R'}$ making the natural identification $N \cong N'$ also satisfy $\rho \circ \iota = \id$.  We write $c \maps C_1(G,\ZZ) \to C_1(G',\ZZ)$ for the homomorphism sending $a \mapsto 0$ and $e \mapsto e$ for $e \neq a$.

It is straightforward to check that $\rho([e,e']) = [c(e),c(e')]'$ for all $e,e' \in E(G)$, hence $\rho \circ \pi = \pi' \circ c$.  In particular, 
\begin{equation*}
\rho(\epsilon_i \wedge \pi(\gamma_j) \wedge \pi(\gamma_k)) = \epsilon_i \wedge \pi'(\gamma_j) \wedge \pi'(\gamma_k);
\end{equation*}
by \cref{eq:24}, $\rho$ induces a group isomorphism $\sP \cong \sP'$.  Moreover, it is clear from the edge pair characterization in \cref{prop:alpha-edge-char} that, for edges $e,e' \in E(G')$, the indicator functions $f_t$, $g_t$, and $h_t$ are the same in $G'$ as they are in $G$.  In other words,
\begin{equation*}
  \Theta(\Upsilon) = \iota(\Theta'(\Upsilon')) + \sum_{\substack{e \in E(G) \\ (i,j,k) \in S'}} a_{i,j,k}(e,a) x_ex_a \epsilon_i \wedge \epsilon_j \wedge \epsilon_k.
\end{equation*}
Then $\rho(\Theta(\Upsilon)) = \Theta'(\Upsilon')$, proving the first part of the statement.  If $a$ is a separating edge, then $a$ is not part of any cycle.  This implies that $x_a$ does not appear in $\pi(\gamma_t)$ for any $t$, so $\iota$ induces $\sP' \cong \sP$.  Likewise, $a_{i,j,k}(e,a) = 0$ for all $e \in E(G)$, so $\iota(\Theta'(\Upsilon')) = \Theta(\Upsilon)$.  This proves the remaining part of the statement.
\end{proof}

\subsection{Deletion}\label{sec:deletion}

Fix a graph $G$ and let $\Upsilon$ be the explicit $(1,2)$-chain defined in \cref{sec:explicit-representative}.  If $\alpha(G) = 0$, then by definition, we can write $\Theta(\Upsilon)$ as some $\ZZ$-linear combination of the generators in \cref{eq:24}.  In fact, not all of the generators are necessary.
\begin{lemma}\label{sec:deletion-1}
  If $\alpha(G) = 0$, then $\Theta(\Upsilon)$ is a $\ZZ$-linear combination of the generators 
\begin{equation*}
  \set{2\epsilon_i \wedge \pi(\gamma_i) \wedge \pi(\gamma_j) \given i, j \in [g],\, i\neq j}.
\end{equation*}
\end{lemma}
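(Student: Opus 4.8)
The plan is to begin from the full generating set \cref{eq:24} and eliminate the ``off-diagonal'' generators, namely those $2\epsilon_i\wedge\pi(\gamma_j)\wedge\pi(\gamma_k)$ with $i\notin\{j,k\}$. Since $\alpha(G)=0$, the element $\Theta(\Upsilon)$ lies in $\sP$, so I may fix integers $c_{i,\{j,k\}}$ with
\[
  \Theta(\Upsilon)=\sum_{i,\,j<k} c_{i,\{j,k\}}\bigl(2\epsilon_i\wedge\pi(\gamma_j)\wedge\pi(\gamma_k)\bigr).
\]
The goal then reduces to showing that $c_{p,\{q,r\}}=0$ whenever $p,q,r$ are distinct, for then $\Theta(\Upsilon)$ is visibly a combination of the generators $2\epsilon_i\wedge\pi(\gamma_i)\wedge\pi(\gamma_j)$ with $i\in\{j,k\}$, which span the asserted set. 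I would extract this vanishing one monomial at a time.

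The essential observation is that the cycle basis built in \cref{sec:explicit-representative} supplies, for each $i$, a \emph{private} edge: the non-tree edge $e_i$ lies on $\gamma_i$ but on no other $\gamma_j$, so $f_i(e_i)=1$ and $f_j(e_i)=0$ for $j\neq i$. Consequently the variable $x_{e_i}$ occurs in the Gram matrix $Q=([\gamma_a,\gamma_b])_{a,b}$ only in the diagonal entry $Q_{ii}$. By \cref{eq:25}, the $\epsilon_p\wedge\epsilon_q\wedge\epsilon_r$-coefficient of a generator $2\epsilon_i\wedge\pi(\gamma_j)\wedge\pi(\gamma_k)$ is twice the $2\times2$ minor of $Q$ with row set $\{j,k\}$ and column set $\{p,q,r\}\setminus\{i\}$; I would track, inside this coefficient, the single quadratic monomial $x_{e_q}x_{e_r}$.

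For a fixed distinct triple $p,q,r$, I claim $x_{e_q}x_{e_r}$ appears in the $\epsilon_p\wedge\epsilon_q\wedge\epsilon_r$-coefficient of exactly one generator. Because $x_{e_q}$ occupies only the slot $Q_{qq}$ and $x_{e_r}$ only $Q_{rr}$, the monomial can arise within a $2\times2$ minor only from the product $Q_{qq}Q_{rr}$; having $Q_{qq}$ as a factor demands that $q$ be both a row index ($q\in\{j,k\}$) and a surviving column index ($q\neq i$), and likewise for $r$. These conditions force $\{j,k\}=\{q,r\}$ and $i=p$, i.e. the off-diagonal generator $2\epsilon_p\wedge\pi(\gamma_q)\wedge\pi(\gamma_r)$, whose minor is $Q_{qq}Q_{rr}-Q_{qr}^2$; the cross term $Q_{qr}^2$ contributes nothing since $e_q,e_r\notin\supp\gamma_q\cap\supp\gamma_r$. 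Hence this generator contributes $x_{e_q}x_{e_r}$ with coefficient $\pm 2c_{p,\{q,r\}}$. Every other generator fails an index condition and misses the monomial; in particular a good generator has $i\in\{j,k\}$, which together with $\{j,k\}=\{q,r\}$ would force $i\in\{q,r\}$, contradicting $i=p$.

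On the other hand, \cref{prop:alpha-edge-char} computes this same coefficient as $a_{p,q,r}(e_q,e_r)$, which vanishes because none of $\gamma_p,\gamma_q,\gamma_r$ contains both $e_q$ and $e_r$. Comparing the two computations gives $c_{p,\{q,r\}}=0$, and letting $\{p,q,r\}$ range over all distinct triples removes every off-diagonal generator. The step I expect to demand the most care is the clean-separation claim of the third paragraph: one must verify, against the good generators and the other two off-diagonal generators sharing the monomial $\epsilon_p\wedge\epsilon_q\wedge\epsilon_r$, that no alternative source of $x_{e_q}x_{e_r}$ and no accidental cancellation occurs. This is exactly where the privacy of $e_q,e_r$—their confinement to single diagonal entries of $Q$—does the work, and it is the crux of the argument.
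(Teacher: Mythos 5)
Your proposal is correct and follows essentially the same route as the paper: both arguments isolate the monomial $x_{e_q}x_{e_r}\,\epsilon_p\wedge\epsilon_q\wedge\epsilon_r$, show it occurs (with coefficient $2$) in exactly one generator of $\sP$ --- the off-diagonal one $2\epsilon_p\wedge\pi(\gamma_q)\wedge\pi(\gamma_r)$ --- and note that it is absent from $\Theta(\Upsilon)$ because the non-tree edge variables never appear there. Your write-up merely spells out in more detail the ``exactly one generator'' step that the paper calls straightforward, and cites \cref{prop:alpha-edge-char} where the paper cites \cref{sec:explicit-formula-1}; both references suffice.
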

\begin{proof}
  Fix indices $i$, $j$, and $k$ all distinct with $j < k$.  By \cref{eq:25}, the generator $2\epsilon_i \wedge \pi(\gamma_j) \wedge \pi(\gamma_k)$ contributes a term 
\begin{equation*}
  2 \left( [\gamma_j,\gamma_j][\gamma_k,\gamma_k] - [\gamma_j,\gamma_k]^2 \right)\epsilon_i \wedge \epsilon_j \wedge \epsilon_k,
\end{equation*}
which contains $2x_{e_j}x_{e_k}\, \epsilon_i \wedge \epsilon_j \wedge \epsilon_k$.  Since $e_t$ appears only in $\gamma_t$, it is straightforward to show that this is in fact the only generator that contains a nonzero multiple of $2x_{e_j}x_{e_k} \epsilon_i \wedge \epsilon_j \wedge \epsilon_k$.  Moreover, by \cref{sec:explicit-formula-1}, such a term also does not appear in $\Theta(\Upsilon)$; as there is no way to cancel this term out with a different generator, we conclude that $\epsilon_i \wedge \pi(\gamma_j) \wedge \pi(\gamma_k)$ cannot contribute to a combination equaling $\Theta(\Upsilon)$.  In other words, we must have either $i = j$ or $i = k$.
\end{proof}  

\begin{proposition}\label{deletion}
  Let $G$ be a graph with an edge $a$.  If either 
  \begin{enumerate}
  \item \label{item:4} $a$ is a loop edge or
  \item \label{item:5} $a$ is parallel to an edge $a'$,
  \end{enumerate}
then $\alpha(G) = 0$ implies that $\alpha(G \setminus a) = 0$.
\end{proposition}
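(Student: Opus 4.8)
The plan is to handle both cases at once by deleting $a$ after arranging it to be the non-tree edge generating the last cycle. Set $G' \defeq G \setminus a$ and choose a spanning tree $T$ of $G$ with $a \notin T$; in case (a) this is automatic, and in case (b) I place $a'$ in $T$. Labeling $a = e_g$, the fundamental cycle it determines is $\gamma_g = a$ in case (a) and $\gamma_g = a - a'$ in case (b), and in either case $a$ lies in no cycle other than $\gamma_g$, so $\gamma_1, \dots, \gamma_{g-1}$ is a homology basis of $G'$ with the same spanning tree $T' = T$. As in the proof of \cref{contraction}, let $\iota \maps R' \to R$ be the inclusion and $\rho \maps R \to R'$ the projection with $x_a \mapsto 0$; additionally let $p \maps N \to N'$ be the projection killing $\epsilon_g$ under the identification $N' = \langle \epsilon_1, \dots, \epsilon_{g-1} \rangle$, and write $\psi$ for the induced map $N_R^{\wedge 3} \to (N'_{R'})^{\wedge 3}$.

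I would first record two facts about $\Theta(\Upsilon)$. Since $a = e_g$ is a non-tree edge, \cref{sec:explicit-formula-1} shows that $x_a$ appears in no coefficient of $\Theta(\Upsilon)$, so $\rho$ fixes $\Theta(\Upsilon)$. Next, by the edge-pair characterization of \cref{prop:alpha-edge-char}, the indicator functions $f_t, g_t, h_t$ for $t < g$ and edges $e, e' \neq a$ are identical in $G$ and $G'$; consequently the part of $\Theta(\Upsilon)$ indexed by $i,j,k < g$ equals $\iota(\Theta'(\Upsilon'))$, while every remaining term carries a factor $\epsilon_g$ and is killed by $p$. Hence $\psi(\Theta(\Upsilon)) = \Theta'(\Upsilon')$.

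Now suppose $\alpha(G) = 0$. By \cref{sec:deletion-1} I may write $\Theta(\Upsilon) = \sum_{i \neq j} 2 c_{ij}\, \epsilon_i \wedge \pi(\gamma_i) \wedge \pi(\gamma_j)$ with $c_{ij} \in \ZZ$, and I apply $\psi$ termwise. The summands with $i, j < g$ land in $\sP'$ by \cref{eq:24}; those with $i = g$ vanish since $p(\epsilon_g) = 0$; and those with $j = g$ and $i < g$ collect to $2 \bigl( \sum_{i < g} c_{ig}\, \epsilon_i \wedge \pi'(\gamma_i) \bigr) \wedge \psi(\pi(\gamma_g))$, using that $\psi(\pi(\gamma_i)) = \pi'(\gamma_i)$ for $i < g$. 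It remains to see that this last term vanishes. To that end I extract the coefficient of $x_a$ from the reduced expression for $\Theta(\Upsilon)$: the variable $x_a$ enters only through the summand $x_a \epsilon_g$ of $\pi(\gamma_g)$, the generators with $i = g$ contribute nothing because $\epsilon_g \wedge \epsilon_g = 0$, and for $i < g$ one has $\epsilon_i \wedge \pi(\gamma_i) \wedge \epsilon_g = \epsilon_i \wedge \pi'(\gamma_i) \wedge \epsilon_g$. Thus the $x_a$-part of $\Theta(\Upsilon)$ equals $2 x_a \bigl( \sum_{i < g} c_{ig}\, \epsilon_i \wedge \pi'(\gamma_i) \bigr) \wedge \epsilon_g$, which must vanish by the first fact above; since $R$ is an integral domain and $\epsilon_g$ completes a basis of $N'_{R'}$ to one of $N_R$, this forces $\sum_{i < g} c_{ig}\, \epsilon_i \wedge \pi'(\gamma_i) = 0$. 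Therefore $\Theta'(\Upsilon') = \psi(\Theta(\Upsilon)) = \sum_{i,j < g} 2 c_{ij}\, \epsilon_i \wedge \pi'(\gamma_i) \wedge \pi'(\gamma_j) \in \sP'$, giving $\alpha(G') = 0$.

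The main obstacle is the parallel case, where $\psi(\pi(\gamma_g)) = -\pi'(a')$ is nonzero and need not belong to $\sP'$; the loop case is easier because there $\pi(\gamma_g) = x_a \epsilon_g$ forces $\psi(\pi(\gamma_g)) = 0$ immediately. The one delicate point, which resolves both cases uniformly, is the cancellation in the final step: the absence of $x_a$ from $\Theta(\Upsilon)$ is exactly what yields the relation $\sum_{i < g} c_{ig}\, \epsilon_i \wedge \pi'(\gamma_i) = 0$ needed to annihilate the $\psi(\pi(\gamma_g))$ contribution.
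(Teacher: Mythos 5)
Your proof is correct, and its high-level architecture matches the paper's: both arguments set $a = e_g$, use \cref{sec:deletion-1} to restrict to the generators $2\epsilon_i \wedge \pi(\gamma_i) \wedge \pi(\gamma_j)$, observe via \cref{prop:alpha-edge-char} that the deletion map identifies the ``index $< g$'' part of $\Theta(\Upsilon)$ with $\Theta'(\Upsilon')$, and then exploit \cref{sec:explicit-formula-1} --- the absence of $x_a$ from $\Theta(\Upsilon)$ --- to neutralize the problematic generators involving $\pi(\gamma_g)$. Where you genuinely diverge is in the choice of spanning tree for case \ref{item:5}: you place $a'$ \emph{inside} $T$, so that $\gamma_g = a - a'$ and $x_a$ occurs in exactly one location, namely the $x_a\epsilon_g$ summand of $\pi(\gamma_g)$. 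The paper instead takes $a' = e_{g-1}$ \emph{outside} $T$, which forces a three-way subcase split (loop; $a'$ separating in $G'$; $a'$ non-separating) and, in the last subcase, the substitution $\gamma_g = \gamma_{g-1} - e_{g-1} + e_g$ followed by a coefficient extraction yielding the relations $[\gamma_i,\gamma_j](a_i - a_j) = 0$. Your choice collapses all of this into one uniform computation and delivers a cleaner conclusion: extracting the $x_a$-coefficient gives $\bigl(\sum_{i<g} c_{ig}\,\epsilon_i \wedge \pi'(\gamma_i)\bigr) \wedge \epsilon_g = 0$ on the nose, so the entire $\psi(\pi(\gamma_g))$-contribution dies regardless of what $\psi(\pi(\gamma_g))$ is, rather than only after a further application of the projection and the derived relations. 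The two delicate steps --- that $-\wedge\,\epsilon_g$ is injective on $\Lambda^2\langle\epsilon_1,\ldots,\epsilon_{g-1}\rangle_R$ because $\epsilon_g$ completes a basis, and that the generators with leading factor $\epsilon_g$ contribute no $x_a$-terms since $\epsilon_g \wedge \epsilon_g = 0$ --- are both handled correctly. What the paper's route buys in exchange is that its spanning-tree convention keeps every parallel class represented among the $e_i$, which is convenient elsewhere; what yours buys is a shorter, case-free proof of this proposition.
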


\begin{proof}
  Let $G' \defeq G \setminus a$.  In either case \ref{item:4} or \ref{item:5}, $a$ is part of some cycle.  Without loss of generality, we may choose the spanning tree $T$ and the labeling on the edges of $G \setminus T$ so that $e_g = a$.  We also fix a basepoint $v$ and orientations on the edges and cycles.  These choices descend to $G'$.  Identifying edges of $G'$ with the corresponding edges of $G$, there is a natural inclusion $\phi \maps R' \to R$ that misses $x_{e_g}$.  Likewise, identifying the cycles of $G'$ with the corresponding cycles in $G$, we let $\iota \maps H_1(G',\ZZ) \to H_1(G,\ZZ)$ denote the induced inclusion on homology and $\rho \maps H_1(G,\ZZ) \to H_1(G',\ZZ)$ the projection that kills $\gamma_g$. Define
\begin{equation*}
\begin{tikzcd}[row sep=0pt]
  N'_{R'} \ar[r, "\nu"] & N_R \\
  u \ar[r, mapsto] & \phi \circ u \circ \rho
\end{tikzcd}
\qquad
\begin{tikzcd}[row sep=0pt]
  N_R \ar[r, "\xi"] & N_R \\
  u \ar[r, mapsto] & u \circ \iota \circ \rho
\end{tikzcd}.
\end{equation*}
  In coordinates, we may identify $N'_{R'} \cong R'\genset{\epsilon_1,\ldots,\epsilon_{g-1}}$ and $N_R \cong R\genset{\epsilon_1,\ldots,\epsilon_g}$; then $\nu$ is the inclusion of $R'$-modules sending $\epsilon_i \mapsto \epsilon_i$, while $\xi$ is the projection of $R$-modules sending $\epsilon_i \mapsto \epsilon_i$ for $i < g$ and $\epsilon_g \mapsto 0$.  

  Because $x_{e_g}$ appears in $[e,\gamma_i]$ only for $e = e_g$ and $i = g$, it is straightforward to check that
\begin{equation*}
  \nu(\pi'(e)) = \xi(\pi(e))
\end{equation*}
for all edges $e \in E(G') \subset E(G)$.  We claim that $\nu_{*}\Upsilon' = \xi_{*}\Upsilon$.  Indeed, observe first that $\Upsilon'$ does not have an $\epsilon_g$-component, and that the $\epsilon_g$-component of $\Upsilon$ is killed by $\xi_{*}$.  Fixing $i \neq g$, we recall that the paths $\delta_i$ defined in \cref{sec:explicit-representative} remain in $T = T'$, so $\delta_i' = \delta_i$.  The definition of $\Upsilon'$ given by \cref{eq:26} depends only on the points $u_{i,j}' = \pi'\left(\delta_i + \sum_{k=0}^{j-1} f_i(e_{i,k})e_{i,k} \right) \in N'_{R'}$.  Then $\nu(u_{i,j}') = \xi(u_{i,j})$ for all $j$, so the definition of the pushforward in \cref{eq:23} implies that $\nu_{*}(\epsilon_i \otimes \Upsilon_i') = \xi_{*}(\epsilon_i \otimes \Upsilon_i)$.  The claim follows.  Consequently,
\begin{equation*}
  \nu(\Theta'(\Upsilon')) = \Theta(\nu_{*}\Upsilon') = \Theta(\xi_{*}\Upsilon) = \xi(\Theta(\Upsilon)).
\end{equation*}

A similar computation on generators of $H_{1,2}(\Jac(G))$ shows that $\nu(\sP') \subset \xi(\sP)$, but we may not have equality in general.  Explicitly,
\begin{equation*}
\nu(\sP') = \ZZ\genset*{2\epsilon_i \wedge \xi(\pi(\gamma_j)) \wedge \xi(\pi(\gamma_k)) \given i,j,k \in [g-1],\, j < k},
\end{equation*}
leaving open the possibility that $\xi$ fails to map generators of the form $2\epsilon_i \wedge \pi(\gamma_j) \wedge \pi(\gamma_g)$ into $\nu(\sP')$.  Here we have restricted our attention to the case where $i < g$, since $\xi(\epsilon_g) = 0$.  Our assumption that $\alpha(G) = 0$ means that we may write $\Theta(\Upsilon)$ explicitly as a $\ZZ$-linear combination of the generators in \cref{sec:deletion-1}.  Therefore, if we can show that $2\epsilon_i \wedge \xi(\pi(\gamma_i)) \wedge \xi(\pi(\gamma_g))$ lies in $\nu(\sP')$ for any such generator appearing in the linear combination, then we will have shown that $\nu(\Theta'(\Upsilon')) \in \nu(\sP')$; by injectivity of $\nu$, this would imply in turn that $\Theta'(\Upsilon') \in \sP'$, as desired.

For case \ref{item:4}, this is straightforward because $\pi(\gamma_g) = \pi(e_g) = x_{e_g}\epsilon_g$, so $\xi(\pi(\gamma_g)) = 0$.  In case \ref{item:5}, recall that $e_g$ is parallel to an edge $a'$.  Without loss of generality, assume that $e_g$ and $a'$ have the same orientation.  If $a'$ is a separating edge in $G'$, then $\gamma_g = -a' + e_g$ and $a'$ is not part of any other cycle $\gamma_i$.  In particular, $\pi(\gamma_g) = (x_{a'} + x_{e_g})\epsilon_g$, so we again have that $\xi(\pi(\gamma_g)) = 0$.  Otherwise, $a'$ is not separating in $G'$, so we may choose $T$ so that $e_{g-1} = a'$.  We write explicitly 
\begin{equation*}
  \Theta(\Upsilon) \equiv 2 \sum_{i=1}^{g-1} a_i \epsilon_i \wedge \pi(\gamma_i) \wedge \pi(\gamma_g) \pmod{\xi^{-1}(\nu(\sP'))}
\end{equation*}
for $a_i \in \ZZ$, where we have omitted from the linear combination the generators of $H_{1,2}(\Jac(G))$ that we already know map to $\nu(\sP')$ under $\xi$, i.e., those that do not contain $\pi(\gamma_g)$ as a factor or that have $\epsilon_g$ as the first factor.

The fact that $\gamma_g = \gamma_{g-1} - e_{g-1} + e_g$ allows us to rewrite
\begin{equation}\label{eq:28}
\Theta(\Upsilon) \equiv - 2 x_{e_{g-1}} \sum_{i=1}^{g-1} a_i \epsilon_i \wedge \pi(\gamma_i) \wedge \epsilon_{g-1} + 2 x_{e_g} \sum_{i=1}^{g-1} a_i \epsilon_i \wedge \pi(\gamma_i) \wedge \epsilon_g \pmod{\xi^{-1}(\nu(\sP'))}.
\end{equation}
Since $x_{e_g}$ appears as a coefficient in $\pi(\gamma_j)$ only for $j = g$, the only place where it appears in \cref{eq:28} is where we have explicitly written it before the second summation.  In particular, $x_{e_g}$ does not appear in any of the omitted generators of the form $2\epsilon_g \wedge \pi(\gamma_g) \wedge \pi(\gamma_j)$ because the leading factor of $\epsilon_g$ kills the $\epsilon_g$-term of $\pi(\gamma_g)$, nor does it appear in $\Theta(\Upsilon)$ by \cref{sec:explicit-formula-1}.  We conclude that
\begin{align*}
  0
  &= \sum_{i=1}^{g-1} a_i \epsilon_i \wedge \pi(\gamma_i) \wedge \epsilon_g\\
  &= \sum_{i=1}^{g-1} a_i \epsilon_i \wedge \left(\sum_{j=1}^g[\gamma_i,\gamma_j]\epsilon_j\right) \wedge \epsilon_g\\
  &= \sum_{i=1}^{g-1}\sum_{j=1}^{g-1} [\gamma_i,\gamma_j] a_i \epsilon_i \wedge \epsilon_j \wedge \epsilon_g,\\
  &= \sum_{1 \leq i < j \leq g-1} [\gamma_i,\gamma_j] (a_i-a_j) \epsilon_i \wedge \epsilon_j \wedge \epsilon_g,
\end{align*}
hence $[\gamma_i,\gamma_j](a_i - a_j) = 0$ for all $i,j \in [g-1]$.  Applying $\xi$ to \cref{eq:28} kills terms that contain $\epsilon_g$ as a factor, so we obtain
\begin{align*}
  \xi(\Theta(\Upsilon))
  &\equiv \xi\left(-2 x_{e_{g-1}} \sum_{i=1}^{g-1} a_i \epsilon_i \wedge \pi(\gamma_i) \wedge \epsilon_{g-1}\right) \pmod{\nu(\sP')}\\
  &= \xi\left(-2 x_{e_{g-1}} \sum_{i=1}^{g-1}\sum_{j=1}^g [\gamma_i,\gamma_j] a_i \epsilon_i \wedge \epsilon_j \wedge \epsilon_{g-1}\right)\\
  &= \xi\left(-2 x_{e_{g-1}} \sum_{i=1}^{g-1}\sum_{j=i+1}^g [\gamma_i,\gamma_j] (a_i-a_j) \epsilon_i \wedge \epsilon_j \wedge \epsilon_{g-1}\right)\\
  &= \xi\left(-2 x_{e_{g-1}} \sum_{i=1}^{g-1} [\gamma_i,\gamma_g] (a_i-a_g) \epsilon_i \wedge \epsilon_g \wedge \epsilon_{g-1}\right)\\
  &= 0,
\end{align*}
completing the proof.
\end{proof}

\subsection{Graphs of hyperelliptic type}\label{sec:graphs-hyper-type}

Given a graph $G$, we write $G^2$ to mean the \textit{2-edge-connectivization} of $G$, obtained by contracting each of the separating edges of $G$.
\begin{corollary}\label{lem:2}
  $\alpha(G) = 0$ if and only if $\alpha(G^2) = 0$.
\end{corollary}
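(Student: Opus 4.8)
The plan is to obtain the corollary as an iterated application of \cref{contraction}. The structural input I would use is the standard characterization of a separating edge (bridge) as an edge lying in no cycle of $G$; equivalently, its removal disconnects the graph. This has two consequences that drive the argument: every separating edge is a non-loop edge, so \cref{contraction} applies to it, and contracting a separating edge $a$ merges the two sides of the cut it induces without changing the cycle space, so the separating edges of $G/a$ are exactly the separating edges of $G$ other than $a$.

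First I would list the separating edges of $G$ as $a_1,\ldots,a_m$ and form the chain $G_0 \defeq G$ and $G_i \defeq G_{i-1}/a_i$. By the remark above, at each stage $a_i$ is still a separating (hence non-loop) edge of $G_{i-1}$, and no new separating edges are ever created; thus after contracting all of them the final graph $G_m$ has no separating edges and, by the definition of the $2$-edge-connectivization, equals $G^2$. (Since the bridges form a forest, this terminal graph does not depend on the order of contraction, so $G^2$ is well-defined.) Applying the separating-edge half of \cref{contraction} at each step gives the biconditional $\alpha(G_{i-1}) = 0 \iff \alpha(G_i) = 0$, and chaining these equivalences along the whole sequence yields $\alpha(G) = 0 \iff \alpha(G^2) = 0$.

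I do not anticipate a genuine obstacle. The only point that needs a line of justification is the bookkeeping claim that each $a_i$ remains separating after the earlier contractions, which is immediate from the no-cycle characterization of bridges, since contracting an edge that lies in no cycle leaves the cycle membership of every other edge unchanged. With that in hand, the corollary is a formal consequence of \cref{contraction}.
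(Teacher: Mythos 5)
Your proposal is correct and is exactly the argument the paper intends: the paper's proof of this corollary is simply ``repeated application of \cref{contraction},'' using the biconditional available for separating edges, and your write-up just makes the (routine) bookkeeping explicit that each bridge remains a bridge after contracting the others and that the terminal graph is $G^2$.
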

\begin{proof}
  This follows immediately from repeated application of \cref{contraction}.
\end{proof}

\begin{lemma}\label{lem:3}
  Let $G_1$ and $G_2$ be graphs, with $G \defeq G_1 \vee G_2$ the wedge sum.  If $G_1$ and $G_2$ are both period-trivial, then so is $G$.  In particular, a graph has trivial Ceresa period if each of its maximal 2-connected components is.
\end{lemma}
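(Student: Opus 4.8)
The plan is to exploit the explicit representative $\Theta(\Upsilon)$ together with the edge-pair characterization of \cref{prop:alpha-edge-char}, which makes the desired splitting transparent. First I would fix the wedge vertex $v$ (the point identified in forming $G = G_1 \vee G_2$) as the common basepoint of $G$, $G_1$, and $G_2$; this is legitimate since $\Theta(\Upsilon)$ is basepoint-independent by \cref{lem:basep-indep}. Choosing a spanning tree $T = T_1 \cup T_2$ compatible with the decomposition yields a homology basis $\gamma_1,\dots,\gamma_g$ in which each cycle lies entirely in $G_1$ or entirely in $G_2$; write $I_1$ and $I_2 = [g] \setminus I_1$ for the corresponding index sets and $N = N_1 \oplus N_2$ for the induced splitting of $N = H^1(G,\ZZ)$ into the spans of the dual basis vectors $\epsilon_i$. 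The key observation is that the length pairing is diagonal, so for an edge $e \in E(G_1)$ one has $[e,\gamma_m] = 0$ whenever $\gamma_m \subseteq G_2$; hence $\pi(e)$ lies in the $R$-span of $\set{\epsilon_i \given i \in I_1}$, and symmetrically for $G_2$.

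Next I would establish the block decomposition $\Theta(\Upsilon) = \iota_1(\Theta_1(\Upsilon_1)) + \iota_2(\Theta_2(\Upsilon_2))$, where $\iota_r$ is induced by the inclusions $N_r \hookrightarrow N$ and $R_r \hookrightarrow R$. By \cref{prop:alpha-edge-char}, the coefficient $a_{i,j,k}(e,e')$ is nonzero only when $\gamma_i$ contains both $e$ and $e'$, forcing $e,e'$ into a common component, while $\gamma_j$ contains $e$ and $\gamma_k$ contains $e'$, forcing $j,k$ into that same component. Thus every nonvanishing monomial of $\Theta(\Upsilon)$ has all of $i,j,k$ in a single $I_r$ and both edge variables from $E(G_r)$. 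Since the indicator functions $f_t, g_t, h_t$ and the paths $\delta_t$ for $t \in I_r$ are intrinsic to $G_r$, these monomials assemble exactly into $\iota_r(\Theta_r(\Upsilon_r))$.

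It then remains to compare the period lattices. Using $\pi(\gamma_j) = \iota_r(\pi_r(\gamma_j))$ for $j \in I_r$ (again because cross-component pairings vanish), the generators $2\epsilon_i \wedge \pi_r(\gamma_j) \wedge \pi_r(\gamma_k)$ of $\sP_r$ map under $\iota_r$ to generators $2\epsilon_i \wedge \pi(\gamma_j) \wedge \pi(\gamma_k)$ of $\sP$ listed in \cref{eq:24}, so $\iota_r(\sP_r) \subseteq \sP$. If $\alpha(G_1) = \alpha(G_2) = 0$, then $\Theta_r(\Upsilon_r) \in \sP_r$, whence each $\iota_r(\Theta_r(\Upsilon_r)) \in \sP$, and summing gives $\Theta(\Upsilon) \in \sP$, i.e.\ $\alpha(G) = 0$. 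For the final assertion, I would contract all bridges using \cref{lem:2} to pass to $G^2$; since $G^2$ is $2$-edge-connected it has no trivial blocks and is realized as an iterated wedge sum of its maximal $2$-connected components, so induction on the number of such components via the wedge statement yields the claim.

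The step I expect to require the most care is verifying that no cross-component monomials survive in $\Theta(\Upsilon)$ and that the surviving ones genuinely reproduce $\Theta_r(\Upsilon_r)$ under $\iota_r$; routing everything through \cref{prop:alpha-edge-char} rather than \cref{eq:13} is what keeps this bookkeeping clean, since the characterization isolates precisely the combinatorial data that is local to each component.
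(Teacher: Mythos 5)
Your proposal is correct and follows essentially the same route as the paper: fix the wedge point as basepoint, take $T = T_1 \vee T_2$ so the cycle basis splits by component, use \cref{prop:alpha-edge-char} to see that no cross-component edge pair contributes and that the surviving terms assemble into $\Theta_1(\Upsilon_1) + \Theta_2(\Upsilon_2)$, and then check $\sP_1 \oplus \sP_2 \subset \sP$ via \cref{eq:24}. The only cosmetic difference is that you spell out the block-tree induction for the second assertion, which the paper dismisses as immediate.
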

\begin{proof}
  The second statement follows trivially from the first.  To prove the first statement, let $v_1 \in V(G_1)$ and $v_2 \in V(G_2)$ be the two vertices identified in $G$.  For $t \in \set{1,2}$, fix $v_t$ as the basepoint of $G_t$ with spanning tree $T_t$.  Notice that $T \defeq T_1 \vee T_2$ is a spanning tree of $G$; let $v_1 = v_2$ be the basepoint for $G$.  Let $\gamma_{t,1},\ldots,\gamma_{t,g_t}$ be the simple cycles determined by $T_t$ with arbitrary orientation; then $\gamma_{1,1},\ldots,\gamma_{1,g_1},\gamma_{2,1},\ldots,\gamma_{2,g_2}$ are the simple cycles of $G$.  Consequently, we identify $H_1(G,\ZZ) \cong H_1(G_1,\ZZ) \oplus H_1(G_2,\ZZ)$.  The paths $\delta_{t,i}$ in $T_t$ descend to the corresponding paths in $T$, allowing us to define $(1,2)$-chains $\Upsilon_1$, $\Upsilon_2$, and $\Upsilon$ using the explicit construction given in \cref{sec:explicit-representative}.

  We may regard $R$ as the coproduct of $\ZZ$-algebras $R \cong R_1 \oplus_{\ZZ} R_2$.  Then $N_R \cong N_{1,R} \oplus N_{2,R}$ as $R$-modules.  It is straightforward to see using \cref{prop:alpha-edge-char} that $\Theta(\Upsilon)$ may be obtained as $\Theta(\Upsilon_1) + \Theta(\Upsilon_2)$.  Indeed, no pair of edges $(e_1,e_2)$ with $e_t \in E(G_t)$ share a common cycle, so they do not contribute to $\Theta(\Upsilon)$.  Meanwhile, any pairs $(e,e')$ with both edges coming from the same subgraph $G_1$ have the same $f_{1,i}$, $g_{1,i}$, and $h_{1,i}$ values in $G$ as they do in $G_1$, with $f_{2,i}$, $g_{2,i}$, and $h_{2,i}$ values all zero (and vice versa for edge pairs coming from $G_2$).  Likewise, one can see from the generators given by \cref{eq:24} that $\sP_1 \oplus \sP_2 \subset \sP$; it follows that if $\Theta(\Upsilon_t) \in \sP_t$ for both $t$, then $\Theta(\Upsilon) \in \sP$, as desired.
\end{proof}

Let $G$ and $G'$ be graphs.  We say that $G'$ is a \textit{permissible minor} of $G$ if we may obtain $G'$ from $G$ by deleting only loops or parallel edges and contracting only non-loop edges.  Then \cref{contraction,deletion} immediately imply that $\alpha(G) = 0$ only if $\alpha(G') = 0$.

A tropical curve $C$ is \textit{hyperelliptic} if it admits an involution $\iota$ for which the quotient $C/\iota$ is a tree.  More generally, it is of \textit{hyperelliptic type} if its Jacobian is isomorphic to that of a hyperelliptic tropical curve.  We say that a graph $G$ is of \textit{hyperelliptic type} if some choice of edge lengths makes it into a hyperelliptic-type tropical curve.  For more details on these notions, we refer to \cite{baker2009harmonic,amini2015lifting-I,amini2015lifting-II,chan2013tropical}.  By \cite[Proposition~3.3]{corey2021tropical}, the property of being of hyperelliptic type does not in fact depend on the choice of edge lengths.  Following \cite{corey2021tropical}, we say that $G$ is \textit{strongly of hyperelliptic type} if some choice of edge lengths yields a hyperelliptic tropical curve.

Let $T$ be a tree of maximal valence 3 and fix a disjoint copy $T'$ of $T$.  Let $\iota \maps T \to T'$ be the involution that identifies each vertex of $T$ with the corresponding vertex of $T'$.  Following \cite[Definition~4.7]{chan2013tropical}, we define the \textit{ladder} over $T$ to be the graph $L(T)$ obtained by adding $3 - \val(v)$ parallel edges between $v$ and $\iota(v)$ for each $v \in V(T)$.  We call the edges added in this way \textit{vertical edges}.
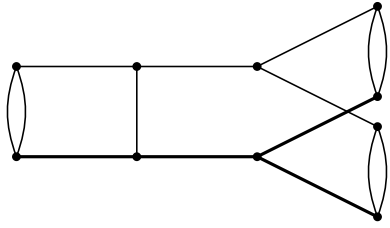
\begin{figure}[htbp]
\centering
\begin{tikzpicture}[semithick, scale=1.6, >=Stealth, decoration={markings,mark=at position 0.5 with {\arrow[xshift={2pt + 2.25\pgflinewidth}]{Latex[scale=0.8]},}}]
  \coordinate (A) at (0,0);
  \coordinate (B) at (0,-0.75);
  \filldraw (A) circle [radius=0.03] ++(1,0) circle [radius=0.03] ++(1,0) circle [radius=0.03] +(1,-0.5) circle [radius=0.03] ++(1,0.5) circle [radius=0.03];
  \filldraw (B) circle [radius=0.03] ++(1,0) circle [radius=0.03] ++(1,0) circle [radius=0.03] +(1,-0.5) circle [radius=0.03] ++(1,0.5) circle [radius=0.03];
  \draw (A) -- ++(2,0) -- +(1,-0.5) +(0,0) -- +(1,0.5);
  \draw[very thick] (B) -- ++(2,0) -- +(1,-0.5) +(0,0) -- +(1,0.5);
  \draw[bend left=20] (A) to +(B) ++(3,0.5) to +(B) ++(0,-1) to +(B);
  \draw[bend right=20] (A) to +(B) ++(3,0.5) to +(B) ++(0,-1) to +(B);
  \draw (A) ++(1,0) to +(B);
\end{tikzpicture}
\caption[]{The graph $L(T)$ for the tree $T$ marked with bold edges}
\end{figure}

\begin{lemma}\label{lem:ladder}
  Let $G$ be a $2$-edge-connected graph that is strongly of hyperelliptic type.  Then $\alpha(G) = 0$.
\end{lemma}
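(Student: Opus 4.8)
The plan is first to reduce the statement to an explicit family of graphs. By the classification of $2$-edge-connected graphs of hyperelliptic type (\cite{chan2013tropical}; see also \cite{corey2021tropical}), such a graph is strongly of hyperelliptic type precisely when it is a ladder $L(T)$ over a tree $T$ of maximal valence $3$, so I may assume $G = L(T)$ with hyperelliptic involution the rail-swap $\iota$. The structural fact I would record is that $\iota_{*} = -\id$ on $H_{1}(G,\ZZ)$: on the cycles obtained from the squares $T,T'$ over each edge of $T$ and from the doubled vertical edges at the leaves, one checks directly that $\iota$ reverses orientation, and this is the defining feature of hyperellipticity. Because a metric graph carries no nonzero $2$-chains, $H_{1}$ is the full cycle group, so $\iota_{*}\gamma = -\gamma$ is an honest equality in $C_{1}(G,\ZZ)$ for every cycle $\gamma$.

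Conceptually this is exactly why $\alpha(L(T))$ should vanish. For the symmetric (matching-rail) edge lengths, $\iota$ is an isometry and induces $[-1]$ on $\Jac(C)$; taking a fixed point $v$ of $\iota$ (a midpoint of a vertical edge, promoted to a vertex after refinement) as basepoint, the naturality identity $\iota_{*}\circ\Phi_{v} = \Phi_{\iota(v)}\circ\iota = \Phi_{v}\circ\iota$ together with $\iota_{*}C = C$ forces $[-1]_{*}\Phi_{v,*}C = \iota_{*}\Phi_{v,*}C = \Phi_{v,*}C$. Hence the Ceresa cycle, and so $\alpha(C)$, vanishes identically on the symmetric locus. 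This pins down the answer but does not finish the proof: the symmetric locus has positive codimension in $\RR^{E(G)}_{>0}$, so it sits inside the exceptional union of hypersurfaces permitted by \cref{prop:evaluation-converse}, and specialized vanishing cannot be promoted to the universal statement $\alpha(G)=0$ by evaluation alone.

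The computation I would therefore carry out is universal, via the edge-pair characterization of \cref{prop:alpha-edge-char}. By \cref{sec:explicit-formula-1} only spanning-tree edges $e,e'$ contribute, and a nonzero triple $(i,j,k)$ requires $\gamma_{i}$ to contain both $e,e'$ while $\gamma_{j},\gamma_{k}$ separate them; in the ladder the basis cycles overlap only along single rail edges and rungs in a tightly constrained pattern, so I would enumerate the finitely many overlap types and read off each $a_{i,j,k}(e,e')$ from \cref{eq:29}. The aim is to exhibit $\Theta(\Upsilon)$ explicitly as a $\ZZ$-combination of the reduced generators $2\epsilon_{i}\wedge\pi(\gamma_{i})\wedge\pi(\gamma_{j})$ of $\sP$ isolated in \cref{sec:deletion-1}. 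Here the anti-invariance $\iota_{*}\gamma_{i}=-\gamma_{i}$ makes each Gram entry $[\gamma_{i},\gamma_{j}]$ invariant under the rail-swap of the variables $x_{e}$, organizing these generators into $\iota$-symmetric packets that should match the symmetric shape \cref{eq:13} forces on $\Theta(\Upsilon)$. I would run this as an induction on $\card V(T)$: the base case has genus $\le 2$, where $N^{\wedge 3}=0$ and $\alpha=0$ for free, and the step peels a leaf of $T$, which on $L(T)$ amounts to collapsing the leaf's doubled-rung apparatus back to a single rung.

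The main obstacle is exactly this promotion of symmetric-locus vanishing to an identity in the full ring $R$, and two points demand care. First, $\Theta(\Upsilon)$ involves only spanning-tree variables (\cref{sec:explicit-formula-1}), whereas the generators of $\sP$ in \cref{eq:24} a priori involve every $x_{e}$; it is the cancellation of the non-tree variables that certifies membership, and while the rail-swap anti-invariance forced by $\iota_{*}=-\id$ should be what guarantees this cancellation, converting that symmetry into an explicit integral $\sP$-combination is delicate. Second, every coefficient in \cref{eq:24} and \cref{prop:alpha-edge-char} carries a factor of $2$, so one must verify the certifying combination has genuinely integral, not half-integral, coefficients. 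I expect the real work to concentrate in the leaf-peeling step, where the effect of collapsing the doubled rung must be reconciled with the deletion analysis of \cref{deletion} so that the inductive hypothesis $\alpha(L(T'))=0$ transports cleanly to $\alpha(L(T))=0$.
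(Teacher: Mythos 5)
Your reduction step contains an error, and your main computation is left as a plan whose central difficulty you yourself flag as unresolved. On the reduction: a $2$-edge-connected graph strongly of hyperelliptic type is \emph{not} precisely a ladder $L(T)$; by \cite[Theorem~4.9]{chan2013tropical} the ladders are only the \emph{maximal} cells of the relevant moduli space, so a general such $G$ is a permissible minor of some $L(T)$ (obtained by contracting non-loop edges and deleting loops/parallel edges). To justify ``I may assume $G = L(T)$'' you must invoke \cref{contraction,deletion}, i.e.\ that triviality of the Ceresa period passes to permissible minors; without that, the assumption is unearned.

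The more substantive gap is in the main argument. You correctly observe that the symmetric-locus vanishing via $\iota_{*} = -\id$ cannot be promoted to $\alpha(G) = 0$, and you correctly identify \cref{prop:alpha-edge-char} as the right tool, but you then frame the task as exhibiting $\Theta(\Upsilon)$ as a delicate integral $\ZZ$-combination of generators of $\sP$, to be organized by an induction on $\card V(T)$ whose leaf-peeling step you acknowledge you have not reconciled with \cref{deletion}. This misses the decisive simplification: with the spanning tree $E(T) \cup E(\iota(T)) \cup \set{e_0}$ (the two rails plus one rung over a leaf of $T$), \emph{no} pair of edges of $L(T)$ satisfies the nonvanishing criterion of \cref{prop:alpha-edge-char} --- $e_0$ lies in every basis cycle, every other rung lies in exactly one, two comparable rail edges have nested cycle sets, two incomparable ones share no cycle, and $e,\iota(e)$ form a separating pair lying in identical cycle sets. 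Hence $\Theta(\Upsilon) = 0$ identically in $N_R^{\wedge 3}$, and there is no membership-in-$\sP$ certificate to produce, no integrality of coefficients to check, and no induction needed. As written, your proposal does not carry out the enumeration it promises, and the obstacles you name (cancellation of non-tree variables, half-integrality) are artifacts of aiming at the wrong target; the argument is therefore incomplete.
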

\begin{proof}
  We claim that $G$ is a permissible minor of $L(T)$ for some tree $T$ of maximal valence $3$.  Indeed, this follows almost immediately from \cite[Theorem~4.9]{chan2013tropical}, which states that ladders are precisely the maximal cells of the moduli space of $2$-edge-connected hyperelliptic tropical curves.  However, since we do not allow weighted vertices, we must delete loops rather than contracting them.  Then without loss of generality, we may assume that $G = L(T)$.

  We show that $\Theta(\Upsilon)$ is identically zero for $L(T)$.  Fix a 1-valent vertex $v_0$ of $T$ and an edge $e_0$ of $L(T)$ from $v_0$ to $\iota(v_0)$.  Let $S$ be the spanning tree of $L(T)$ with edges $E(T) \cup E(\iota(T)) \cup \set{e_0}$.  Each remaining vertical edge determines a unique cycle in $L(T)$; label these edges arbitrarily and orient them from $T$ to $\iota(T)$.  Orient each $e \in E(T)$ away from $v_0$; there is a partial order on $E(T)$ given by declaring that $e < e'$ whenever the unique path in $T$ from $v$ to the tail of $e'$ contains $e$.  Rephrasing \cref{prop:alpha-edge-char}, nonzero terms of $\Theta(\Upsilon)$ correspond to pairs of distinct edges $e$ and $e'$ satisfying:
\begin{enumerate}[($*$)]
\item $e$ and $e'$ share a cycle and each is part of another cycle that the other is not in.
\end{enumerate}
We claim that ($*$) is not satisfied for any pair of edges in $L(T)$.  Indeed, neither edge can be any of the vertical edges: $e_0$ is part of every cycle, while each other vertical edge is part of only one cycle.  If both $e$ and $e'$ lie in $T$, then either $e < e'$ or the two edges are incomparable.  In the first case, every cycle containing $e'$ also contains $e$.  In the second case, there are no common cycles between $e$ and $e'$.  By symmetry, ($*$) also fails if both $e$ and $e'$ lie in $T'$.  If $e' = \iota(e)$, then $e'$ and $e$ form a separating pair; in particular, they are contained in precisely the same cycles.  If $e \in T$ and $e' \in T'$ with $e' \neq \iota(e)$, then the fact that ($*$) fails for $e$ and $\iota(e')$ implies that it also fails for $e$ and $e'$.  This proves the claim, so $\Theta(\Upsilon) = 0$, as desired.
\end{proof}

\begin{proof}[Proof of \cref{thm:A}]
  Suppose first that $G$ is not of hyperelliptic type.  By \cite[Theorem~1.1]{corey2021tropical}, $G$ contains $G' \in \set{K_4,L_3}$ as a minor.  In fact, \cite[Lemma~5.10]{corey2022ceresa} implies that, in this special case, $G'$ must be a permissible minor of $G$.  That $\alpha(G) \neq 0$ follows from our computations in \cref{ex:k4,ex:l3} showing that $\alpha(G') \neq 0$ in either case.

Conversely, suppose that $G$ is of hyperelliptic type.  By \cite[Theorem~1.1]{corey2021tropical}, $G$ does not contain $K_4$ or $L_3$ as a minor.  Then neither do the maximal 2-connected components of the 2-edge-connectivization $G^2$, so any such component is still of hyperelliptic type.  In particular, \cref{lem:2,lem:3} imply that we may assume that $G$ itself is 2-connected.  By definition, there exists a 2-connected tropical curve $C$ of hyperelliptic type with underlying graph $G$.  Then by \cite[Theorem~4.5]{corey2021tropical}, there exists a hyperelliptic tropical curve $C'$ of which $C$ is a permissible minor.  Let $G'$ denote the underlying graph of $C'$.  Contracting any separating edges, the resulting tropical curve ${C'}^2$ is hyperelliptic by \cite[Corollary~3.11]{chan2013tropical}.  Then ${G'}^2$ is 2-edge-connected and strongly of hyperelliptic type, so \cref{lem:ladder} implies that $\alpha({G'}^2) = 0$.  By \cref{lem:2}, $\alpha(G') = 0$; since $G$ is a permissible minor of $G'$, we have that $\alpha(G) = 0$.
\end{proof}

\bibliographystyle{alpha}
\bibliography{references}

\end{document}